\newtheorem{theorem}{Theorem}[section]
\newtheorem{proposition}[theorem]{Proposition}
\newtheorem{lemma}[theorem]{Lemma}
\newtheorem{remark}[theorem]{Remark}
\newtheorem{corollary}[theorem]{Corollary}
\numberwithin{equation}{section}
\title[Magnetic nonlinear Schr\"odinger equation in $\mathbb{R}^2$]{Semiclassical states for a magnetic nonlinear Schr\"{o}dinger equation with exponential critical growth in $\mathbb{R}^{2}$}
\author[P. d'Avenia]{Pietro d'Avenia}
\address[P. d'Avenia]{\newline\indent
	Dipartimento di Meccanica, Matematica e Management
	\newline\indent
	Politecnico di Bari
	\newline\indent
	Via Orabona 4,  70125  Bari, Italy}
\email{\href{mailto:pietro.davenia@poliba.it}{pietro.davenia@poliba.it}}
\author[C. Ji]{Chao Ji}
\address[C. Ji]{\newline\indent
	Department of Mathematics
	\newline\indent
	East China University of Science and Technology
	\newline\indent
	Shanghai 200237, PR China }
\email{\href{mailto:jichao@ecust.edu.cn}{jichao@ecust.edu.cn}}
\subjclass[2010]{ 35J20, 35J60, 35B33.}
\date{\today}
\keywords{Nonlinear Schr\"{o}dinger equation, Magnetic field,  Exponential critical growth, Trudinger-Moser inequality, Variational methods.}
\begin{document}

\begin{abstract}
This paper is devoted to the magnetic nonlinear Schr\"{o}dinger equation
\[
\Big(\frac{\varepsilon}{i}\nabla-A(x)\Big)^{2}u+V(x)u=f(| u|^{2})u \text{ in } \mathbb{R}^{2},
\]
where  $\varepsilon>0$ is a parameter,  $V:\mathbb{R}^{2}\rightarrow \mathbb{R}$ and $A: \mathbb{R}^{2}\rightarrow \mathbb{R}^{2}$ are continuous
functions and $f:\mathbb{R}\rightarrow \mathbb{R}$ is a $C^{1}$ function having exponential critical growth. Under a global assumption on the potential $V$, we use variational methods and Ljusternick-Schnirelmann theory to prove existence, multiplicity, concentration, and decay of nontrivial solutions for $\varepsilon>0$ small.
\end{abstract}

\maketitle

\begin{center}
	\begin{minipage}{11cm}
		\tableofcontents
	\end{minipage}
\end{center}

\section{Introduction and main results}

In this paper, we consider the following nonlinear Schr\"{o}dinger equation
\begin{equation}
\label{1.1}
\Big(\frac{\varepsilon}{i}\nabla-A(x)\Big)^{2}u+V(x)u=f(|u|^{2})u
\quad
\hbox{in }\mathbb{R}^2,
\end{equation}
where $\varepsilon>0$ is a parameter, $u:\mathbb{R}^2\to\mathbb{C}$, $A\in C(\mathbb{R}^{2}, \mathbb{R}^{2})$,
 $V\in C(\mathbb{R}^{2}, \mathbb{R})$ satisfies
\begin{equation}
\label{V}\tag{V}
V_{\infty}=\liminf_{\vert x\vert\rightarrow +\infty} V(x)>V_{0}=\inf_{x\in \mathbb{R}^{2}} V(x)>0,
\end{equation}
where $V_{\infty}$ can be also $+\infty$, and $f\in C^{1}(\mathbb{R}, \mathbb{R})$ satisfies the following assumptions:
\begin{enumerate}[label=($f_\arabic{*}$), ref=$f_\arabic{*}$]
	\item \label{f1}$f(t)=0$ if $t\leq0$;
	\item \label{f2}there holds
	\[
	\lim_{t\rightarrow +\infty} \frac{f(t^{2})t}{e^{\alpha t^{2}}}
	=
	\begin{cases}
		0,& \hbox{for } \alpha> 4\pi,\\
		+\infty,& \hbox{for }  0<\alpha<4\pi,
	\end{cases}
	\]
	({\em exponential critical growth});
	\item \label{f3}there is a positive constant $\theta>2$ such that, for every $t>0$,
	\begin{equation*}
		0<\frac{\theta}{2} F(t)\leq tf(t),
	\end{equation*}
where $F(t)=\displaystyle \int_{0}^{t}f(s)ds$;
	\item \label{f4}there exist two constants $p>2$ and
	\begin{equation*}
		C_{p}
		>
		\Big(\frac{8\theta}{\theta-2}\Big)^\frac{p-2}{2}
		\Big(\frac{p-2}{p}\Big)^\frac{p-2}{2}
		V^* S_{p}^{p/2}>0
	\end{equation*}
	such that
	\begin{equation*}
		f'(t)\geq \frac{p-2}{2}C_{p}t^{(p-4)/2}\quad \text{for all}\,\, t>0,
	\end{equation*}
	where
	\begin{equation}\label{SpV*}
		S_{p}=\inf_{u\in H^{1}(\mathbb{R}^{2}, \mathbb{R})\backslash\{0\}}\frac{\Vert \nabla u\Vert_2^{2}+\Vert u\Vert_2^{2}}{\Vert u\Vert_{p}^{2}}
		\text{ and }
		V^{*}=\left\{
		\begin{array}{l}
			V_{\infty},\quad \quad  \text{if}\,\, V_{\infty}<+\infty,\\
			V_{0},\quad \quad \,\, \text{if}\,\, V_{\infty}=+\infty;
		\end{array}%
		\right.
	\end{equation}
	\item \label{f5} $f'(t)\leq (e^{4\pi t}-1)$ for any $t\geq0$.
\end{enumerate}

Equation \eqref{1.1} appears in the study of standing wave solutions $\psi(x, t):=e^{-i\omega/\hbar}u(x)$, with $\omega\in \mathbb{R}$, of the planar nonlinear Schr\"odinger equation
\begin{equation*}
	i\hbar \frac{\partial \psi}{\partial t}=\Big(\frac{\hbar}{i}\nabla-A(x)\Big)^{2}\psi+U(x)\psi-f(|\psi|^{2})\psi
	\quad
	\hbox{in }\mathbb{R}^2\times\mathbb{R}.
\end{equation*}
In particular, the operator
\[
\psi \mapsto\Big( i\hbar \frac{\partial }{\partial t}-U(x)\Big)\psi -\Big(\frac{\hbar}{i}\nabla-A(x)\Big)^{2}\psi
\]
appears in the study of the interaction of a matter field $\psi$ with an external electromagnetic field whose electric potential is $U$ and whose magnetic potential is $A$ (here we are considering an electromagnetic potential which does not depend on the time variable) though the minimal coupling rule. Moreover, from a physical point of view, the study of the existence of solutions of \eqref{1.1}, in particular of {\color{blue} its } ground states, and of their behavior as $\varepsilon \to 0^+$ (which is equivalent to send the Planck constant $\hbar$ to zero and {\color{blue}it } is known as {\em semiclassical limit}) is of the greatest importance in the theory of superconductivity (see \cite{rDFS,rHM} and references therein) and since the transition from Quantum Mechanics to Classical Mechanics can be formally performed by this limit.

If the magnetic potential $A\equiv 0$ and $\mathbb{R}^{2}$ is replaced by $\mathbb{R}^{N}$, then \eqref{1.1} reduces to a following nonlinear Schr\"{o}dinger equation
\begin{equation}\label{1.11}
	-\varepsilon^2\Delta u+V(x)u=f(|u|^2)u
	\quad
	\hbox{in }\mathbb{R}^N.
\end{equation}
A vast literature is dedicated to \eqref{1.11}, in particular concerning existence and multiplicity of solutions, by several methods. If the potential $V$ satisfies the global assumption (\ref{V}), introduced by Rabinowitz in \cite{rRa}, Cingolani and Lazzo in \cite{rCL} considered multiplicity and concentration of positive solutions of \eqref{1.11} with $f(s)=s^{(p-2)/2}$ with $p>2$ if $N=1, 2$ and $2<p<2N/(N-2)$ if $N\geq 3$.
Then, Alves and Figueiredo in \cite{rAF2} generalized the results in \cite{rCL} to the $p$-Laplacian operator for a class of nonlinearities including power growth and in \cite{rAF3} considered multiplicity and concentration of positive solutions for $N$-Laplacian equation with  exponential critical growth in $\mathbb{R}^{N}$ for $N\geq 2$. Finally, let us mention \cite{rACTY, rAS}, that are devoted to critical cases in $\mathbb{R}^2$.

Moreover, after the seminal paper \cite{rEL}, also the  magnetic nonlinear Schr\"{o}dinger equation \eqref{1.1} has been extensively investigated by variational and topological methods (see \cite{ rAZ, rCi, rCJS, rCS1, rDS, rKu, rLS} and references therein).
In particular, in \cite{rAFF},  Alves, Figueiredo, and Furtado, using the penalization method and Ljusternik-Schnirelmann category theory, obtained a multiplicity result for \eqref{1.1} with a subcritical right hand side; in \cite{rBF}, Barile and Figueiredo considered the existence of a solution of \eqref{1.1} with an exponentially critically growing nonlinearity in $\mathbb{R}^{2}$; in \cite{rDJ}, we got multiplicity and concentration for \eqref{1.1} when the potential $V$ satisfies a local assumption due to del Pino and Felmer \cite{rDF}. Let us mention also the recent contributions \cite{rAMV} concerning a multiplicity result for a nonlinear fractional magnetic Schr\"{o}dinger equation with exponential critical growth in the one-dimensional case and \cite{rAD}, where existence and multiplicity for a nonlinear fractional magnetic Schr\"{o}dinger equation  in $\mathbb{R}^N$, $N\geq 3$, in the presence of a subcritical nonlinearity and of a potential $V$ satisfying a global assumption, has been proved.

The main result of this paper is the following.
\begin{theorem}\label{gt62}
Assume that \eqref{V} and (\ref{f1})--(\ref{f5}) hold. Then, there exists  $\varepsilon_{0}>0$ such that problem \eqref{1.1} admits a ground state
solution for any $\varepsilon\in (0, \varepsilon_{0})$.
\end{theorem}

Moreover, introducing the sets
\begin{equation}\label{M}
M:=\{x\in \mathbb{R}^{2}: V(x)=V_{0}\}
\end{equation}
and
\begin{equation*}
M_{\delta}:=\{x\in \mathbb{R}^{2}: \text{dist}(x, M)\leq \delta\}, \ \delta>0,
\end{equation*}
we are able to show, for small $\varepsilon>0$, a multiplicity result for problem \eqref{1.1}  and concentration phenomena of such solutions.
More precisely, we have
\begin{theorem}\label{gt666}
Assume that \eqref{V} and (\ref{f1})--(\ref{f5}) hold. Then, for any  $\delta>0$, there exists $\varepsilon_{\delta}>0$ such that problem \eqref{1.1}
has at least  $\text{cat}_{M_{\delta}}(M)$ nontrivial solutions, for any $0<\varepsilon<\varepsilon_{\delta}$.
\end{theorem}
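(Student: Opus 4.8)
The plan is to apply the Ljusternik--Schnirelmann scheme, comparing the energy of \eqref{1.1} with that of the autonomous problem associated with the minimum value $V_0$ of the potential. After the change of variables $u(x)=v(x/\varepsilon)$, equation \eqref{1.1} is transformed into
\[
\Big(\frac1i\nabla-A(\varepsilon x)\Big)^2 v+V(\varepsilon x)v=f(|v|^2)v,
\]
whose solutions are the critical points of a $C^1$ functional $J_\varepsilon$ on the magnetic Sobolev space $H_\varepsilon$, and I would work on the associated Nehari manifold $\mathcal N_\varepsilon$. Assumption (\ref{f3}) guarantees the usual Nehari/mountain-pass geometry, so that $c_\varepsilon:=\inf_{\mathcal N_\varepsilon}J_\varepsilon>0$ is well defined and, by Theorem \ref{gt62}, attained for $\varepsilon$ small. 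For a point $y_0\in M$ the constant magnetic potential $A(y_0)$ can be gauged away, so the natural limit problem is $-\Delta w+V_0 w=f(|w|^2)w$ in $\mathbb R^2$; denote by $c_{V_0}$ its ground-state level. The first technical goal is to prove $c_\varepsilon\to c_{V_0}$ as $\varepsilon\to0^+$: the lower bound follows from the diamagnetic inequality together with \eqref{V}, and the upper bound from suitable test functions built from a fixed ground state of the limit problem.

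Next I would introduce the two maps that transfer the topology of $M$ into the sublevels of $J_\varepsilon$. Fix a positive radial ground state $w$ of the limit problem, which under (\ref{f1})--(\ref{f5}) has exponential decay, and for $y\in M$ set
\[
\Phi_\varepsilon(y)(x)=\eta\big(|\varepsilon x-y|\big)\,w\Big(x-\tfrac{y}{\varepsilon}\Big)\,e^{\,i A(y)\cdot (x-y/\varepsilon)},
\]
with $\eta$ a suitable cut-off, and project it onto $\mathcal N_\varepsilon$. A direct estimate, again using the diamagnetic inequality and the decay of $w$, yields $\lim_{\varepsilon\to0}\sup_{y\in M}J_\varepsilon(\Phi_\varepsilon(y))=c_{V_0}$, so that $\Phi_\varepsilon(M)$ lies in a sublevel set $\mathcal N_\varepsilon^{\,c_{V_0}+h(\varepsilon)}$ with $h(\varepsilon)\to0$. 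In the opposite direction I would define a barycenter (center of mass) map $\beta_\varepsilon$ on functions with energy close to $c_{V_0}$, and show, via a Lions-type concentration argument, that if $v_n\in\mathcal N_{\varepsilon_n}$ with $J_{\varepsilon_n}(v_n)\to c_{V_0}$ and $\varepsilon_n\to0$, then $|v_n|$ concentrates, up to translation, around a point where $V$ attains $V_0$, i.e.\ a point of $M$; consequently $\beta_\varepsilon$ takes values in $M_\delta$ for $\varepsilon$ small. Combining these two facts, $\beta_\varepsilon\circ\Phi_\varepsilon$ is homotopic to the inclusion $M\hookrightarrow M_\delta$, whence
\[
\text{cat}\big(\mathcal N_\varepsilon^{\,c_{V_0}+h(\varepsilon)}\big)\ge \text{cat}_{M_\delta}(M).
\]

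To convert this category estimate into critical points I need the Palais--Smale condition for $J_\varepsilon|_{\mathcal N_\varepsilon}$ in the energy range $(0,c_{V_0}+h(\varepsilon))$, and this is where the main difficulty lies. Because of the exponential critical growth (\ref{f2}), compactness of $(PS)_c$ sequences may be destroyed by the Trudinger--Moser concentration phenomenon, so one must ensure that the relevant levels remain strictly below the compactness threshold $\Lambda$ furnished by the Trudinger--Moser inequality with critical exponent $4\pi$. It is precisely here that assumption (\ref{f4}) enters: integrating the lower bound on $f'$ gives a power estimate $f(t)\ge C_p t^{(p-2)/2}$, and comparing $c_{V_0}$ with the ground-state level of the corresponding power problem, the sharp lower bound on $C_p$ in terms of $S_p$, $V^\ast$, $\theta$ and $p$ forces $c_{V_0}<\Lambda$; together with (\ref{f5}), which controls $f'$ by $e^{4\pi t}-1$, this provides the uniform exponential integrability needed to run a Brezis--Lieb splitting and rule out vanishing and dichotomy along bounded $(PS)$ sequences. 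The magnetic, complex-valued setting adds a further layer, since the concentration-compactness analysis must be carried out in $H_\varepsilon$ with the phase factors tracked throughout, the diamagnetic inequality being the key device for reducing these estimates to the scalar case. Once compactness is secured, the abstract Ljusternik--Schnirelmann theorem applied to $J_\varepsilon$ on $\mathcal N_\varepsilon$ yields at least $\text{cat}_{M_\delta}(M)$ critical points in the sublevel set, and undoing the rescaling $u(x)=v(x/\varepsilon)$ produces at least $\text{cat}_{M_\delta}(M)$ nontrivial solutions of \eqref{1.1} for $0<\varepsilon<\varepsilon_\delta$, as claimed.
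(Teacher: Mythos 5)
Your proposal is correct and follows essentially the same route as the paper: the projected test functions $\Phi_\varepsilon(y)$ with gauge phase $e^{iA(y)\cdot(x-y/\varepsilon)}$, the barycenter map $\beta_\varepsilon$, the homotopy argument giving $\mathrm{cat}_{\tilde{\mathcal N}_\varepsilon}(\tilde{\mathcal N}_\varepsilon)\geq \mathrm{cat}_{M_\delta}(M)$, the Palais--Smale verification in the range up to $c_{V_0}+h(\varepsilon)$ using (\ref{f4}) to keep levels below the Trudinger--Moser compactness threshold (in the paper, below $\tfrac{\theta-2}{16\theta}$, hence $\limsup_n\Vert u_n\Vert_\varepsilon^2<1/8$, and below $c_{V_\infty}$ when $V_\infty<+\infty$), and the Ljusternik--Schnirelmann theorem on the Nehari manifold are exactly the paper's scheme. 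The only detail left implicit in your outline, which the paper records as a separate corollary, is that constrained critical points of $J_\varepsilon$ on $\mathcal N_\varepsilon$ are genuine critical points of $J_\varepsilon$ in $H_\varepsilon$.
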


Finally, for our solutions we are able to prove their concentration and decay. Indeed we have
\begin{theorem}\label{thm3}
Let $\{\varepsilon_{n}\}$ be a sequence in $(0,+\infty)$ such that $\varepsilon_{n}\rightarrow 0^{+}$, and, for $n$ large enough, let $\{u_{\varepsilon_{n}}\}$ be a sequence of solutions found in Theorem \ref{gt62} or in Theorem \ref{gt666}. If $\eta_{n}\in \mathbb{R}^{2}$ is a global maximum point of $\vert u_{\varepsilon_{n}}\vert$, then
\begin{equation*}
	\underset{n\rightarrow \infty}{\lim}V(\eta_{n})=V_{0}.
\end{equation*}
Moreover, if, furtherly, $A\in C^{1}(\mathbb{R}^{2}, \mathbb{R}^{2})$, then there exist $C, c>0$ such that
\begin{equation}
		\label{decay}
		\vert u_{\varepsilon_{n}}(x)\vert\leq C e^{-c \frac{|x-\eta_{n}|}{\varepsilon_{n}}}
		\text{ in } \mathbb{R}^{2}.
\end{equation}
\end{theorem}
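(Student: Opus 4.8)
The plan is to reduce the analysis to the autonomous limit problem by rescaling around a concentration point and using the diamagnetic and Kato inequalities so that the modulus $|u_{\varepsilon_n}|$ can be handled by real elliptic theory, and then to close the decay estimate with a uniform barrier argument. First I would pick points $P_n\in\mathbb{R}^2$ at which $|u_{\varepsilon_n}|$ stays bounded away from zero; these exist because the solutions carry a fixed amount of energy and, by a Lions-type vanishing lemma applied to $|u_{\varepsilon_n}|$ via the diamagnetic inequality $|\nabla|u_{\varepsilon_n}||\le|(\tfrac{\varepsilon_n}{i}\nabla-A)u_{\varepsilon_n}|$, the modulus cannot vanish. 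Setting $v_n(x):=u_{\varepsilon_n}(\varepsilon_n x+P_n)$, which solves $\big(\tfrac1i\nabla-A_n\big)^2 v_n+V_n v_n=f(|v_n|^2)v_n$ with $A_n(x)=A(\varepsilon_n x+P_n)$ and $V_n(x)=V(\varepsilon_n x+P_n)$, the uniform energy bound and the diamagnetic inequality make $\{|v_n|\}$ bounded in $H^1(\mathbb{R}^2,\mathbb{R})$. The global condition \eqref{V} prevents the concentration from escaping to infinity, where the energy would exceed the minimax level, so $\varepsilon_n P_n\to y_0$ along a subsequence; since the constant limiting potential $A(y_0)$ can be gauged away, $|v_n|\to w$ locally uniformly (and strongly in $H^1$) with $w\ge0$ a nontrivial solution of $-\Delta w+V(y_0)w=f(w^2)w$. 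Comparing energies and using that the autonomous ground-state level $V\mapsto c(V)$ is strictly increasing while the minimax level of \eqref{1.1} tends to $c(V_0)$, together with $V\ge V_0$, forces $V(y_0)=V_0$.

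For the estimates, the Kato inequality combined with the equation gives, in the distributional sense, $-\Delta|v_n|+V_n|v_n|\le f(|v_n|^2)|v_n|$. The key preliminary is a \emph{uniform} bound $\|v_n\|_\infty\le C$ and uniform decay $|v_n(x)|\to0$ as $|x|\to\infty$; here the exponential critical growth enters, and I would obtain these by a Brezis--Kato/Moser iteration in which the nonlinearity is controlled through the Trudinger--Moser inequality, using that the rescaled Dirichlet norms stay below the critical threshold (this is exactly what assumption (\ref{f4}) with its explicit constant guarantees). Granting uniform decay, fix $R$ with $f(|v_n(x)|^2)\le V_0/2$ for $|x|\ge R$ and all $n$; since $V_n\ge V_0$, this yields $-\Delta|v_n|+\tfrac{V_0}{2}|v_n|\le0$ on $\{|x|\ge R\}$. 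Comparing with the barrier $\Phi(x)=Me^{-c|x|}$, where $c<\sqrt{V_0/2}$ (so that $-\Delta\Phi+\tfrac{V_0}{2}\Phi\ge0$) and $M$ is fixed from the uniform $L^\infty$ bound on $\{|x|=R\}$, the maximum principle on $\{|x|>R\}$ gives $|v_n(x)|\le Me^{-c|x|}$, with $c,M,R$ independent of $n$.

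To conclude, since $|v_n|\to w$ locally uniformly while the uniform decay prevents the maxima from escaping, the maximum point $z_n:=(\eta_n-P_n)/\varepsilon_n$ of $|v_n|$ stays bounded; hence $\eta_n=P_n+\varepsilon_n z_n\to y_0$ and $V(\eta_n)\to V(y_0)=V_0$, which is the first assertion. Finally, undoing the scaling gives $|u_{\varepsilon_n}(x)|=|v_n((x-P_n)/\varepsilon_n)|\le Me^{-c|x-P_n|/\varepsilon_n}$, and since $|P_n-\eta_n|=\varepsilon_n|z_n|$ is $O(\varepsilon_n)$, enlarging the constant to absorb this bounded shift (and the region $|x-\eta_n|\le R\varepsilon_n$) yields \eqref{decay}.

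The main obstacle is the uniform $L^\infty$ bound and uniform decay under exponential critical growth: the iteration must be run with constants independent of $n$, and the exponential nonlinearity can only be tamed by Trudinger--Moser once one knows the rescaled Dirichlet norms are uniformly below the critical threshold. The extra hypothesis $A\in C^1(\mathbb{R}^2,\mathbb{R}^2)$ enters precisely here, to legitimise the elliptic estimates and the pointwise magnetic Kato inequality.
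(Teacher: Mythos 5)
Your proposal is correct and follows essentially the same route as the paper: translation/rescaling around concentration points obtained via a Lions-type argument (the paper's Proposition \ref{prop41}), uniform $L^\infty$ bounds and uniform decay of the moduli controlled by Trudinger--Moser below the critical threshold (the paper delegates this to \cite[Lemma 5.1]{rDJ}), then Kato's inequality, the exponential barrier $Ce^{-c|x|}$ with $c^2<V_0/2$, and the maximum principle, concluding that the maximum points remain bounded so that $\eta_n\to y\in M$ and the decay survives the rescaling. The only differences are presentational: you re-sketch the compactness and the Moser iteration that the paper imports as ready-made results.
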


We get our results combining variational methods and the Ljusternik-Schnirelmann theory.

This paper is a completion of our joint research project started with  \cite{rDJ}, where we assumed a local condition on $V$. Here the presence of the global condition \eqref{V} and the weaker assumption $A\in C(\mathbb{R}^{2}, \mathbb{R}^{2})$, require more delicate estimates and a careful and deeper analysis of some technical aspects. Moreover, comparing our case with known results obtained for $A\equiv 0$, the presence of the magnetic field leads to consider a complex valued  problem and this implies to get more delicate estimates. We are also able to weaken the assumptions of some technical lemmas (see, for instance, Lemma \ref{2.0}) and to study the behavior of the maximum points  of the modulus of solutions and their decay  both for the ground states obtained in Theorem \ref{gt62} and for solutions found in Theorem \ref{gt666}.

The paper is organized as follows. In Section \ref{Sec2} we introduce the functional setting and we give some preliminaries. We study the limit problem in Section \ref{Secthree}. Then, in Section \ref{Sec3}, we prove Theorem \ref{gt62}. Section \ref{Sec4} is devoted to prove Theorem \ref{gt666}. Finally, in the last section, we prove Theorem \ref{thm3}.

\subsection*{Notation}
\begin{itemize}
	\item $C, C_1, C_2, \ldots$ denote positive constants whose exact values can change from line to line;
	\item $B_{R}(y)$ denotes the open disk centered at $y\in\mathbb{R}^2$ with radius $R>0$ and $B^{c}_{R}(y)$ denotes its complement in $\mathbb{R}^{2}$;
	\item $\Vert \cdot \Vert$, $\Vert \cdot \Vert_{q}$, and $\Vert\cdot \Vert_{\infty}$ denote the usual norms of the spaces $H^{1}(\mathbb{R}^{2}, \mathbb{R})$, $L^{q}(\mathbb{R}^{2}, \mathbb{K})$, and $L^{\infty}(\mathbb{R}^{2}, \mathbb{K})$, respectively, where $\mathbb{K}=\mathbb{R}$ or $\mathbb{K}=\mathbb{C}$.
\end{itemize}

\section{The variational framework and some preliminaries}\label{Sec2}

In this section we introduce the functional spaces that we use and a {\em classical} equivalent version of \eqref{1.1}. We also present some results for a {\em limit problem} which will be useful for our arguments.

From now on, in the whole paper, we will assume that (\ref{V}) holds.

For a function $u: \mathbb{R}^{2}\rightarrow \mathbb{C}$, let us denote by
\begin{equation*}
\nabla_{A}u:=\Big(\frac{\nabla}{i}-A\Big)u
\end{equation*}
and consider the space
\begin{equation*}
H_{A}^{1}(\mathbb{R}^{2}, \mathbb{C}):=\{u\in L^{2}(\mathbb{R}^{2}, \mathbb{C}): |\nabla_{A}u|\in L^{2}(\mathbb{R}^{2}, \mathbb{R})\},
\end{equation*}
which is an Hilbert space endowed with the scalar product
\begin{equation*}
\langle u, v\rangle
:=\operatorname{Re}\int_{\mathbb{R}^{2}}\Big(\nabla_{A}u\overline{\nabla_{A}v}+u\overline{v}\Big)dx,\quad\text{for any } u, v\in H_{A}^{1}(\mathbb{R}^{2}, \mathbb{C}),
\end{equation*}
where $\operatorname{Re}$ and the bar denote the real part of a complex number and the complex conjugation, respectively. Hence, let $\Vert \cdot \Vert_{A}$ be the norm induced by this inner product.

Observe that a useful tool when we work on $H_{A}^{1}(\mathbb{R}^{2}, \mathbb{C})$ is the diamagnetic inequality
\begin{equation}
\label{2.1}
\vert \nabla_{A}u(x)\vert\geq \vert \nabla\vert u(x)\vert\vert
\end{equation}
(see e.g. \cite[Theorem~7.21]{rLL}).\\
Moreover, a simple change of variables allows us to write equation \eqref{1.1} as
\begin{equation}
\label{1.6}
\Big(\frac{1}{i}\nabla-A_{\varepsilon}(x)\Big)^{2}u+V_{\varepsilon}(x)u=f(\vert u\vert^{2})u
\quad
\hbox{in }\mathbb{R}^2,
\end{equation}
where $A_{\varepsilon}(x)=A(\varepsilon x)$ and $V_{\varepsilon}(x)=V(\varepsilon x)$.\\
Thus, we introduce the Hilbert space
 $$
 H_{\varepsilon}:=\Big\{u\in H_{A_{\varepsilon}}^{1}(\mathbb{R}^{2}, \mathbb{C}): \int_{\mathbb{R}^{2}}V_{\varepsilon}(x)\vert u\vert^{2}dx< \infty \Big\}
 $$
 endowed with the scalar product
 \begin{align*}
\langle u, v\rangle_{\varepsilon}
:=\operatorname{Re}\int_{\mathbb{R}^{2}}\Big(\nabla_{A_{\varepsilon}}u\overline{\nabla_{A_{\varepsilon}}v}+V_{\varepsilon}(x)u\overline{v}\Big)dx
\end{align*}
and $\Vert \cdot \Vert_{\varepsilon}$ is the norm induced by $\langle \cdot , \cdot \rangle_{\varepsilon}$.

For every $\varepsilon>0$, the functional space $H_{\varepsilon}$ satisfies the following embeddings. For the sake of completeness here we give some details about their proof.

\begin{lemma}\label{lem21}
The space $H_{\varepsilon}$ is continuously embedded in $L^{r}(\mathbb{R}^{2}, \mathbb{C})$ for $r\geq 2$ and compact embedded in $L^{r}_{\rm{loc}}(\mathbb{R}^{2}, \mathbb{C})$ for $r\geq 1$.
Moreover, if $V_{\infty}=+\infty$, then $H_{\varepsilon}$ is compactly embedded in $L^{r}(\mathbb{R}^{2}, \mathbb{C})$ for $r\geq 2$.
\end{lemma}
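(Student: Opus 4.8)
The plan is to treat the three assertions separately, in each case reducing the magnetic problem to a standard real Sobolev space by means of the diamagnetic inequality \eqref{2.1} and the lower bound $V_{\varepsilon}(x)=V(\varepsilon x)\geq V_{0}>0$ coming from \eqref{V}. For the continuous embedding into $L^{r}$ with $r\geq2$, I would first note that for $u\in H_{\varepsilon}$ the inequality \eqref{2.1} gives $\Vert\nabla\vert u\vert\Vert_{2}^{2}\leq\int_{\mathbb{R}^{2}}\vert\nabla_{A_{\varepsilon}}u\vert^{2}dx$, while $V_{\varepsilon}\geq V_{0}$ yields $\Vert\vert u\vert\Vert_{2}^{2}\leq V_{0}^{-1}\int_{\mathbb{R}^{2}}V_{\varepsilon}(x)\vert u\vert^{2}dx$. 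Hence $\vert u\vert\in H^{1}(\mathbb{R}^{2},\mathbb{R})$ with $\Vert\vert u\vert\Vert\leq C\Vert u\Vert_{\varepsilon}$ for a constant depending only on $V_{0}$. Since $H^{1}(\mathbb{R}^{2},\mathbb{R})\hookrightarrow L^{r}(\mathbb{R}^{2},\mathbb{R})$ continuously for every $r\in[2,+\infty)$ and $\Vert u\Vert_{r}=\Vert\vert u\vert\Vert_{r}$, the conclusion follows at once.

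For the local compactness I would exploit that $A_{\varepsilon}$ is continuous, hence bounded on every compact set. On a bounded domain $\Omega$, writing $\frac{1}{i}\nabla u=\nabla_{A_{\varepsilon}}u+A_{\varepsilon}u$ gives $\vert\nabla u\vert\leq\vert\nabla_{A_{\varepsilon}}u\vert+\vert A_{\varepsilon}\vert\vert u\vert$, so that $\int_{\Omega}\vert\nabla u\vert^{2}\leq2\int_{\Omega}\vert\nabla_{A_{\varepsilon}}u\vert^{2}+2\Vert A_{\varepsilon}\Vert_{L^{\infty}(\Omega)}^{2}\int_{\Omega}\vert u\vert^{2}$. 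Thus $H_{\varepsilon}$ restricts continuously to $H^{1}(\Omega,\mathbb{C})$, and a bounded sequence in $H_{\varepsilon}$ is bounded in $H^{1}(\Omega,\mathbb{C})$. The Rellich--Kondrachov theorem (for $\Omega\subset\mathbb{R}^{2}$ bounded and $r\in[1,+\infty)$) then gives, up to a subsequence, strong convergence in $L^{r}(\Omega,\mathbb{C})$; a diagonal argument over an exhaustion of $\mathbb{R}^{2}$ by disks yields the compact embedding into $L^{r}_{\mathrm{loc}}(\mathbb{R}^{2},\mathbb{C})$.

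The delicate part, and the main obstacle, is the global compactness under $V_{\infty}=+\infty$, where I must rule out loss of mass at infinity. Given $u_{n}\rightharpoonup u$ in $H_{\varepsilon}$, hence bounded, I would use the coercivity of $V_{\varepsilon}$: since $V_{\infty}=+\infty$ forces $V_{\varepsilon}(x)\to+\infty$ as $\vert x\vert\to+\infty$, for every $M>0$ there is $R>0$ with $V_{\varepsilon}\geq M$ on $B^{c}_{R}(0)$, whence
\[
\int_{B^{c}_{R}(0)}\vert u_{n}\vert^{2}\,dx\leq\frac{1}{M}\int_{B^{c}_{R}(0)}V_{\varepsilon}(x)\vert u_{n}\vert^{2}\,dx\leq\frac{C}{M}
\]
uniformly in $n$. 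This uniform smallness of the $L^{2}$-tails, combined with the local $L^{2}$-convergence from the previous step, upgrades to strong convergence $u_{n}\to u$ in $L^{2}(\mathbb{R}^{2},\mathbb{C})$. Finally, for $r>2$ I would interpolate, writing $\Vert u_{n}-u\Vert_{r}\leq\Vert u_{n}-u\Vert_{2}^{1-\theta}\Vert u_{n}-u\Vert_{s}^{\theta}$ with $s>r$ and the appropriate $\theta\in(0,1)$, the factor $\Vert u_{n}-u\Vert_{s}$ being uniformly bounded by the continuous embedding of the first part; this closes the argument for every $r\geq2$.
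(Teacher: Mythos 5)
Your proposal is correct and takes essentially the same route as the paper: diamagnetic inequality plus $V_{\varepsilon}\geq V_{0}$ for the continuous embedding, restriction to $H^{1}$ on compact sets for local compactness, and coercivity of $V_{\varepsilon}$ to kill the $L^{2}$-tails followed by interpolation when $V_{\infty}=+\infty$. The only differences are presentational: the paper, ``for the sake of completeness,'' re-derives the local compactness by hand (extension operator plus the Riesz--Fr\'echet--Kolmogorov translation criterion) where you invoke Rellich--Kondrachov with a diagonal argument, and in the final step it uses the Gagliardo--Nirenberg inequality applied to $|v_{n}|$ where you use H\"older interpolation against a higher $L^{s}$-norm; both substitutions are harmless.
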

\begin{proof}
According to the diamagnetic inequality \eqref{2.1}, if $u\in H_{\varepsilon}$, then $|u|\in H^{1}(\mathbb{R}^{2}, \mathbb{R})$  and
\begin{equation}
\label{emb}
\| |u| \|\leq C \|u\|_\varepsilon.
\end{equation}
Thus, the continuous embedding $H_{\varepsilon}\hookrightarrow L^{r}(\mathbb{R}^{2}, \mathbb{C})$ is a consequence of \eqref{emb} and of the continuous embedding  $H^{1}(\mathbb{R}^2,\mathbb{R})\hookrightarrow L^{r}(\mathbb{R}^{2}, \mathbb{R})$ for $r\geq 2$.\\
To get the compact embedding of $H_{\varepsilon}$ in $L^{r}_{\rm{loc}}(\mathbb{R}^{2}, \mathbb{C})$ for $r\geq 1$ we can argue in the following way.\\
Let $K$ be an open subset of $\mathbb{R}^2$ with compact closure.\\
First of all, we prove that $H_\varepsilon$ is continuously embedded in $H^1(K,\mathbb{C})$.\\
Indeed we have
\begin{align*}
\|u\|_{H^1(K,\mathbb{C})}^2
&=
\int_K |\nabla u|^2 dx
+ \int_K |u|^2 dx
= \int_K \Big| \frac{\nabla}{i} u \Big|^2 dx
+ \int_K |u|^2 dx \\
&\leq
C\left(\int_K | \nabla_{A_\varepsilon} u |^2 dx
+ \int_K | A_\varepsilon u |^2 dx
+ \int_K V_\varepsilon (x) |u|^2 dx \right)\\
&\leq
C\left(
\int_K | \nabla_{A_\varepsilon} u |^2 dx
+ \int_K V_\varepsilon (x) |u|^2 dx \right)
\leq C \| u\|_\varepsilon^2.
\end{align*}
Let now $\mathcal{F}$ be a bounded subset of $H_\varepsilon$. We want to prove that $\mathcal{F}|_K$, which is a subset of $H^1(K,\mathbb{C})$ due to the continuous embedding proved before, has compact closure in $L^r(K,\mathbb{C})$, $r\geq 1$.\\
Let $\tilde{\mathcal{P}}$ be an extension operator found in \cite[Theorem 9.7]{rBr} for $H^1(K,\mathbb{R})$ and $\mathcal{P}:= \tilde{\mathcal{P}}({\rm Re}(u))+i\tilde{\mathcal{P}}({\rm Im}(u))$.\\
We know that, for each $v\in H^1(K,\mathbb{R})$,
$$\tilde{\mathcal{P}}v|_{K}=v, \
\|\tilde{\mathcal{P}}v\|_{L^2(\mathbb{R}^2,\mathbb{R})}\leq C\|v\|_{L^2(K,\mathbb{R})}
\text{ and }
\|\tilde{\mathcal{P}}v\|_{H^1(\mathbb{R}^2,\mathbb{R})}\leq C\|v\|_{H^1(K,\mathbb{R})}.$$
Thus, we get
\[
\mathcal{P}u|_{K}
= \tilde{\mathcal{P}}({\rm Re}(u))|_{K}+i\tilde{\mathcal{P}}({\rm Im}(u))|_{K}
= {\rm Re}(u) +i {\rm Im}(u)
= u,
\]
\begin{align*}
\|\mathcal{P}u\|_{L^2(\mathbb{R}^2,\mathbb{C})}^2
&=
\|\tilde{\mathcal{P}}({\rm Re}(u))+i\tilde{\mathcal{P}}({\rm Im}(u))\|_{L^2(\mathbb{R}^2,\mathbb{C})}^2
\leq
C(
\|{\rm Re}(u)\|_{L^2(K,\mathbb{R})}^2
+ \|{\rm Im}(u)\|_{L^2(K,\mathbb{R})}^2
)
\\
&=
C\|u\|_{L^2(K,\mathbb{C})}^2
\end{align*}
and, analogously,
\begin{align*}
\|\mathcal{P}u\|_{H^1(\mathbb{R}^2,\mathbb{C})}^2
&=
\|\tilde{\mathcal{P}}({\rm Re}(u))+i\tilde{\mathcal{P}}({\rm Im}(u))\|_{H^1(\mathbb{R}^2,\mathbb{C})}^2
\leq
C(
\|{\rm Re}(u)\|_{H^1(K,\mathbb{R})}^2
+ \|{\rm Im}(u)\|_{H^1(K,\mathbb{R})}^2
)
\\
&=
C\|u\|_{H^1(K,\mathbb{C})}^2.
\end{align*}
Let now $\mathcal{H}:=\mathcal{P}(\mathcal{F}|_K)$.\\
Since $H_\varepsilon$ is continuously embedded in $H^1(K,\mathbb{C})$, we have that $\mathcal{H}$ is bounded in $H^1(\mathbb{R}^2,\mathbb{C})$ and $\mathcal{H}|_K=\mathcal{F}|_K$.\\
Thus we can proceed in a classical way, see e.g. \cite[Theorem 9.16]{rBr}, using \cite[Theorem 4.26]{rBr}.\\
Indeed, first observe that, since $K$ is bounded, we may assume $r\geq 2$.\\
Then, let $u\in C_{c}^{\infty}(\mathbb{R}^{2}, \mathbb{C})$ and $\tau_h u:=u(\cdot+h)$, with $h\in\mathbb{R}^2$.\\
For every $x\in\mathbb{R}^2$ we have
\begin{align*}
|\tau_h u (x) - u(x)|
&=
\sqrt{[{\rm Re}(u)(x+h)-{\rm Re}(u)(x)]^2+[{\rm Im}(u)(x+h)-{\rm Im}(u)(x))]^2}\\
&\leq
|{\rm Re}(u)(x+h)-{\rm Re}(u)(x)|
+|{\rm Im}(u)(x+h)-{\rm Im}(u)(x)|\\
&\leq
\int_0^1 |v_{\rm R}'(t)| dt
+ \int_0^1 |v_{\rm I}'(t)| dt\\
&=
\int_0^1 |h\cdot\nabla {\rm Re}(u)(x+th)| dt
+ \int_0^1 |h\cdot\nabla {\rm Im}(u)(x+th)| dt\\
&\leq
|h| \left(\int_0^1 (|\nabla {\rm Re}(u)(x+th)|
+ |\nabla {\rm Im}(u)(x+th)|) dt\right)\\
&\leq
\sqrt{2}|h| \int_0^1 |\nabla u(x+th)| dt
\end{align*}
where $v_{\rm R}(t):={\rm Re}(u)(x+th)$ and $v_{\rm I}(t):={\rm Im}(u)(x+th)$, and so, using the H\"older inequality,
\[
|\tau_h u (x) - u(x)|^2 \leq 2|h|^2 \int_0^1 |\nabla u(x+th)|^2 dt.
\]
Hence, using the density of  $C_{c}^{\infty}(\mathbb{R}^{2}, \mathbb{C})$ in $H^1(\mathbb{R}^2,\mathbb{C})$, see \cite[Proposition 2.1]{rEL},
\[
\|\tau_h u - u\|_2 \leq C|h| \|\nabla u\|_2 \leq C |h|
\]
uniformly with respect to $u\in\mathcal{H}$.\\
For $r>2$, if we take $q>r$, there exists $\alpha\in (0,1)$ such that
\[
\frac{1}{r} = \frac{\alpha}{2} + \frac{1-\alpha}{q},
\]
so that, for every $u\in \mathcal{H}$,
\[
\|\tau_h u - u\|_r
\leq
\|\tau_h u - u\|_2^\alpha \|\tau_h u - u\|_q^{1-\alpha}
\leq C |h|^\alpha
\]
with $C$ independent of the elements of $\mathcal{H}$.\\
Thus, \cite[Theorem 4.26]{rBr} implies that the closure of $\mathcal{H}|_K=\mathcal{F}|_K$ is compact in $L^r(K,\mathbb{C})$.\\
Finally, if $V_{\infty}=+\infty$, let $\{u_n\}\subset H_\varepsilon$ be a sequence weakly convergent to $u$ in  $H_\varepsilon$ and $v_n:=u_n -u$.\\
We have that $v_n \to 0$ in $L^2(\mathbb{R}^2,\mathbb{C})$.\\
Indeed, for every $\eta>0$, there exists $\sigma>0$  large enough such that
\[
\frac{1}{V(x)} <\eta
\text{ in } B_\sigma^c(0).
\]
Moreover, since $v_n \rightharpoonup 0$ in  $H_\varepsilon$, then $v_n \to 0$ in $L_{\rm loc}^2(\mathbb{R}^2,\mathbb{C})$.
Thus, for $n$ large enough,
\[
\| v_n\|_2^2
< \eta
+ \frac{\eta}{V_{0}} \int_{B_{\sigma/\varepsilon}^c(0)} V_\varepsilon(x) |v_n|^2 dx
< C\eta.
\]
Hence, using the Gagliardo-Nirenberg inequality applied to $|v_n|$, for every $r \geq 2$ we have
\[
\| v_n\|_r \leq C \|\nabla |v_n|\|_2^{1-2/r} \| v_n\|_2^{2/r},
\]
and, by the diamagnetic inequality \eqref{2.1} and the boundedness of $\{v_n\}$ in $H_\varepsilon$, we conclude.
\end{proof}

For compact supported functions in $H^{1}(\mathbb{R}^{2}, \mathbb{R})$, we have the following result which will be very important for some estimates below.
\begin{lemma}\label{gt520}
If $u\in H^{1}(\mathbb{R}^{2}, \mathbb{R})$ and $u$ has a compact support, then $\omega:=e^{iA(0)\cdot x}u\in H_{\varepsilon}$.
\end{lemma}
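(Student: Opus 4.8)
The plan is to verify directly that $\omega$ satisfies the two defining conditions of membership in $H_\varepsilon$: namely that $\omega\in H^1_{A_\varepsilon}(\mathbb{R}^2,\mathbb{C})$ and that $\int_{\mathbb{R}^2}V_\varepsilon(x)|\omega|^2\,dx<\infty$. The feature I will exploit throughout is that $u$ has compact support, say $\operatorname{supp}u\subset \overline{B_R(0)}$, together with the continuity of $A$ and $V$, which makes both $A_\varepsilon$ and $V_\varepsilon$ bounded on this compact set.

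First I would observe that $|\omega(x)|=|u(x)|$, since $|e^{iA(0)\cdot x}|=1$, so that $\omega\in L^2(\mathbb{R}^2,\mathbb{C})$ follows at once from $u\in L^2(\mathbb{R}^2,\mathbb{R})$. The heart of the argument is the explicit computation of $\nabla_{A_\varepsilon}\omega$. By the product rule,
\[
\frac{\nabla}{i}\omega = e^{iA(0)\cdot x}\Big(\frac{\nabla u}{i}+A(0)u\Big),
\]
whence
\[
\nabla_{A_\varepsilon}\omega
= \Big(\frac{\nabla}{i}-A_\varepsilon\Big)\omega
= e^{iA(0)\cdot x}\Big(\frac{\nabla u}{i}+\big(A(0)-A_\varepsilon(x)\big)u\Big).
\]
Taking moduli and using the triangle inequality yields the pointwise bound $|\nabla_{A_\varepsilon}\omega|\leq |\nabla u|+|A(0)-A_\varepsilon(x)|\,|u|$.

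Next I would control the $L^2$ norm of the right-hand side term by term. The term $|\nabla u|$ lies in $L^2$ by hypothesis. For the second term, since $\operatorname{supp}u$ is contained in the compact set $\overline{B_R(0)}$ and $A$ is continuous, the quantity $|A(0)-A(\varepsilon x)|$ is bounded on $\overline{B_R(0)}$ by a constant depending only on $\varepsilon$ and $R$; hence $|A(0)-A_\varepsilon(x)|\,|u|\in L^2(\mathbb{R}^2,\mathbb{R})$. This gives $|\nabla_{A_\varepsilon}\omega|\in L^2$ and therefore $\omega\in H^1_{A_\varepsilon}(\mathbb{R}^2,\mathbb{C})$. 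For the potential term I would argue identically: $\int_{\mathbb{R}^2}V_\varepsilon(x)|\omega|^2\,dx=\int_{\overline{B_R(0)}}V(\varepsilon x)|u|^2\,dx$, and since $V$ is continuous and thus bounded on the compact set $\overline{B_{\varepsilon R}(0)}$, this integral is finite.

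I do not expect any genuine obstacle here, as the statement is essentially a bookkeeping exercise; the one point that must not be overlooked is precisely \emph{why} the compact support is indispensable. Without it, neither the boundedness of $|A(0)-A_\varepsilon|$ on $\operatorname{supp}u$ nor the integrability of $V_\varepsilon|u|^2$ would be guaranteed, since $A$ and $V$ are only assumed continuous (and $V$ may even diverge to $+\infty$). Collecting the three bounds then gives $\omega\in H_\varepsilon$, as claimed.
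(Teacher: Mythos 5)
Your proof is correct and follows essentially the same route as the paper: a direct verification of the two membership conditions, exploiting the compact support of $u$ together with the continuity of $A$ and $V$ to bound $|A_\varepsilon|$ (or $|A(0)-A_\varepsilon|$) and $V_\varepsilon$ on $\operatorname{supp}u$. The only cosmetic difference is that you factor out the phase and bound $|\nabla_{A_\varepsilon}\omega|$ by the triangle inequality, whereas the paper expands $|\nabla_{A_\varepsilon}\omega|^2$ and controls the cross term by Young's inequality; both yield the same estimate $\|\omega\|_\varepsilon^2\leq C(\|\nabla u\|_2^2+\|u\|_2^2)$.
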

\begin{proof}
Assume that $\text{supp} (u)\subset B_{R}(0)$. Since $V$ is continuous, it is clear that
\begin{align*}
\int_{\mathbb{R}^{2}}V_{\varepsilon}(x)\vert \omega\vert^{2}dx=\int_{B_{R}(0)}V_{\varepsilon}(x)\vert \omega\vert^{2}dx\leq C\Vert u\Vert_{2}^{2}<+\infty.
\end{align*}
Moreover, since $V$ and $A$ are continuous, we have
\begin{align*}
\int_{\mathbb{R}^{2}}\vert \nabla_{A_{\varepsilon}} \omega\vert^{2} dx
&=
\int_{\mathbb{R}^{2}}\vert\nabla\omega\vert^{2} dx
+\int_{\mathbb{R}^{2}}\vert A_{\varepsilon}(x)\vert^{2}\vert
\omega\vert^{2}dx
+2\text{Re}\int_{\mathbb{R}^{2}}iA_{\varepsilon}(x)\overline{\omega}\nabla \omega dx\\
&\leq
2\int_{\mathbb{R}^{2}}\vert\nabla\omega\vert^{2}dx
+2 \int_{\mathbb{R}^{2}} \vert A_{\varepsilon}(x)\vert^{2}\vert \omega\vert^{2} dx\\
&\leq
C\left[
\int_{\mathbb{R}^{2}} \vert \nabla u\vert^{2} dx
+ \int_{\mathbb{R}^{2}} \vert u\vert^{2} dx
\right] <+\infty
\end{align*}
and we conclude.
\end{proof}

A useful tool when we work with nonlinearities with exponential critical growth is the following version of Trudinger-Moser inequality as stated e.g. in \cite[Lemma 1.2]{ rACTY} (see also \cite[Lemma 2.1]{rC}).
\begin{lemma}\label{le24}
	If $\alpha>0$ and $u\in H^{1}(\mathbb{R}^{2}, \mathbb{R})$, then
	\begin{equation*}
	\int_{\mathbb{R}^{2}}(e^{\alpha \vert u\vert^{2}}-1)dx<+\infty.
	\end{equation*}
	Moreover, if $\Vert \nabla u\Vert_{2}^{2}\leq 1$, $\Vert  u\Vert_{2}\leq M<+\infty$, and $0<\alpha< 4\pi$, then there exists a positive constant $C(M, \alpha)$, which depends only on $M$ and $\alpha$,  such that
	\begin{equation*}
	\int_{\mathbb{R}^{2}}(e^{\alpha \vert u\vert^{2}}-1)dx\leq C(M, \alpha).
	\end{equation*}
\end{lemma}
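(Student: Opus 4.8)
The plan is to reduce everything to the classical Moser--Trudinger inequality on balls by combining Schwarz symmetrization with a truncation argument. Since both the integral $\int_{\mathbb{R}^2}(e^{\alpha|u|^2}-1)\,dx$ and the $L^2$-norm depend only on the distribution of $|u|$, while the Dirichlet energy cannot increase under symmetrization (P\'olya--Szeg\H{o}), I would first replace $u$ by its symmetric decreasing rearrangement $u^*\geq 0$: then $\|\nabla u^*\|_2\leq\|\nabla u\|_2$, $\|u^*\|_2=\|u\|_2$, and $\int(e^{\alpha|u|^2}-1)=\int(e^{\alpha(u^*)^2}-1)$. Thus it suffices to prove both assertions for a radially symmetric, nonincreasing, nonnegative function, which I still call $u$.

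First I would split $\mathbb{R}^2=\{u\leq 1\}\cup\{u>1\}$. On $\{u\leq 1\}$ the elementary convexity bound $e^{\alpha s}-1\leq(e^\alpha-1)s$ for $s\in[0,1]$, applied with $s=u^2$, gives $\int_{\{u\leq 1\}}(e^{\alpha u^2}-1)\,dx\leq(e^\alpha-1)\|u\|_2^2$, which is finite and, under the normalization of the second part, bounded by $(e^\alpha-1)M^2$. Because $u$ is radial decreasing, the superlevel set $\Omega:=\{u>1\}$ is a ball $B_\rho(0)$, and Chebyshev's inequality yields $|B_\rho|=|\Omega|\leq\|u\|_2^2$, so $\rho$ (hence $|B_\rho|$) is finite in general and bounded by $M^2$ in the normalized case.

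The core step is the estimate on $B_\rho$. I would set $v:=(u-1)^+\in H_0^1(B_\rho)$, so that $\|\nabla v\|_2\leq\|\nabla u\|_2$ and $u\leq v+1$ on $B_\rho$. For the uniform bound I use $\|\nabla v\|_2\leq 1$ and expand $\alpha u^2\leq\alpha(v+1)^2=\alpha v^2+2\alpha v+\alpha$; the crucial point is that $\alpha<4\pi$ strictly, so Young's inequality $2\alpha v\leq(4\pi-\alpha)v^2+\alpha^2/(4\pi-\alpha)$ absorbs the linear term and gives $\alpha u^2\leq 4\pi v^2+C_\alpha$. Moser's inequality on the ball, $\int_{B_\rho}e^{4\pi v^2}\,dx\leq C|B_\rho|$ with $C$ universal, then yields $\int_{B_\rho}(e^{\alpha u^2}-1)\,dx\leq e^{C_\alpha}\int_{B_\rho}e^{4\pi v^2}\,dx\leq C(M,\alpha)$, and adding the two regions proves the second assertion. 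For the first assertion I argue identically but, since $\alpha$ and $\|\nabla v\|_2$ are now arbitrary, I use instead Trudinger's finiteness statement, namely that $\int_{B_\rho}e^{\beta v^2}\,dx<\infty$ for every fixed $v\in H_0^1(B_\rho)$ and every $\beta>0$; applying this with $\beta=2\alpha$ to the crude bound $\alpha u^2\leq 2\alpha v^2+2\alpha$ shows $\int_{B_\rho}(e^{\alpha u^2}-1)\,dx<\infty$, and finiteness follows.

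The main obstacle is the interplay between the unboundedness of $\mathbb{R}^2$ and the critical constant $4\pi$: passing from $u^2=(v+1)^2$ to the pure quadratic $v^2$ needed for Moser's theorem costs a cross term $2\alpha v$, and this can be reabsorbed without exceeding the critical exponent only because $\alpha<4\pi$ strictly. The symmetrization is what turns the finite-measure set $\{u>1\}$ into an honest ball, so that the bounded-domain Moser--Trudinger inequality becomes applicable and the resulting constant depends only on $|B_\rho|\leq M^2$, that is, only on $M$ and $\alpha$.
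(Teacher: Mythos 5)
Your proof is correct, but there is nothing in the paper to compare it against line by line: the paper does not prove Lemma \ref{le24} at all, it simply quotes it as known, citing \cite[Lemma 1.2]{rACTY} and \cite[Lemma 2.1]{rC}. What you have written is essentially a self-contained proof of that cited result, and it follows the standard route (close to Cao's original argument in \cite{rC}): Schwarz symmetrization plus P\'olya--Szeg\H{o} to reduce to a radial nonincreasing function, splitting at height $1$ so that $\{u>1\}$ becomes a ball of measure at most $\Vert u\Vert_2^2\leq M^2$, the truncation $v=(u-1)^+\in H_0^1(B_\rho)$, and Moser's sharp inequality on bounded domains. The two points that make the argument work are correctly identified and correctly executed: the elementary bound $e^{\alpha s}-1\leq (e^{\alpha}-1)s$ on $[0,1]$ handles the region where $u$ is small, and it is exactly the subtraction of $1$ in the integrand that gives integrability there despite the infinite measure; and the absorption $2\alpha v\leq(4\pi-\alpha)v^2+\alpha^2/(4\pi-\alpha)$, which uses $\alpha<4\pi$ strictly, converts $\alpha(v+1)^2$ into $4\pi v^2+C_\alpha$ so that Moser's critical bound $\int_{B_\rho}e^{4\pi v^2}dx\leq C\vert B_\rho\vert$ applies with a constant controlled by $M^2$ and $\alpha$ only. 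Two minor points you may wish to make explicit if this were written out in full: first, that $(u-1)^+$ restricted to $B_\rho$ indeed lies in $H_0^1(B_\rho)$ (a function in $H^1(\mathbb{R}^2)$ vanishing a.e.\ outside the Lipschitz domain $B_\rho$ has zero trace); second, that the non-uniform finiteness statement you invoke for the first assertion, namely $\int_\Omega e^{\beta v^2}dx<+\infty$ for every single $v\in H_0^1(\Omega)$ and every $\beta>0$, while classical, deserves a word of justification --- for instance by writing $v=v_1+v_2$ with $v_1$ smooth and compactly supported (hence bounded) and $\Vert \nabla v_2\Vert_2$ so small that $2\beta\Vert\nabla v_2\Vert_2^2<4\pi$, and then applying Moser's inequality to $v_2$.
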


Observe also that that (\ref{f1}) and (\ref{f2}) imply that, fixed $q>2$, for any $\zeta>0$ and $\alpha>4\pi$, there exists a constant $C>0$, which depends on $q$, $\alpha$, $\zeta$, such that
	\begin{equation}
	\label{2.8}
	f(t)\leq\zeta+Ct^{(q-2)/2}(e^{\alpha t}-1),\, \text{ for all } t\geq 0
	\end{equation}
	and, using (\ref{f3}),
	\begin{equation}
	\label{2.9}
	F(t)\leq\zeta t+Ct^{q/2}(e^{\alpha t}-1),\, \text{ for all } t\geq 0.
	\end{equation}
	Then, by \eqref{2.8} and \eqref{2.9}, we have
	\begin{equation}
	\label{2.10}
	f(t^{2})t^{2}\leq\zeta t^{2}+C\vert t\vert^{q}(e^{\alpha  t^{2}}-1),\, \text{ for all } t\in\mathbb{R}
	\end{equation}
	and
	\begin{equation}
	\label{2.11}
	F(t^{2})\leq\zeta t^{2}+C\vert t\vert^{q}(e^{\alpha t^{2}}-1),\,
	\text{ for all } t\in\mathbb{R}.
	\end{equation}
Moreover (\ref{f4}) and (\ref{f5}) imply that there exist two positive constants $C_1$ and $C_2$ such that
	$$
	C_1 t^{(p-4)/2}\leq f'(t)\leq C_{2} t,\quad \text{as } t\rightarrow 0^{+}
	$$
	and so $p> 6$.

For $u\in H_{\varepsilon}$, let
\begin{equation}
\label{2.2}
\hat{u}_{j}:=\varphi_{j}u,
\end{equation}
where $j\in \mathbb{N}^{*}$ and $\varphi_{j}(x)=\varphi(2x/j)$ with $\varphi\in C_{c}^{\infty}(\mathbb{R}^{2}, \mathbb{R})$, $0\leq \varphi\leq 1$,
$\varphi(x)=1$ if $\vert x\vert\leq 1$, and $\varphi(x)=0$ if $\vert x\vert\geq 2$.\\
Note that $\hat{u}_{j}$ belongs to $H_{\varepsilon}$ and has compact support.\\
We have  that any $u\in H_{\varepsilon}$ can be {\em approximated} by $\hat{u}_j$ in the following sense. This property is classical, but, for completeness, we give here some details.
\begin{lemma}\label{gt521}
	Let $\varepsilon>0$. For any $u\in H_{\varepsilon}$, $\Vert \hat{u}_{j}-u\Vert_{\varepsilon}\rightarrow 0$ as $j\rightarrow +\infty$.
\end{lemma}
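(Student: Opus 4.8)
The plan is to prove that $\hat{u}_j = \varphi_j u \to u$ in $H_\varepsilon$ by splitting the squared norm difference into its three natural pieces—the kinetic (magnetic gradient) term, the potential term, and controlling the overlap—and showing each vanishes as $j \to +\infty$. The key observation is that $\varphi_j \to 1$ pointwise as $j \to +\infty$ (since $\varphi_j(x) = 1$ whenever $|x| \leq j/2$), that $0 \leq \varphi_j \leq 1$, and that the gradient $\nabla \varphi_j(x) = \tfrac{2}{j}(\nabla\varphi)(2x/j)$ satisfies the uniform bound $\|\nabla \varphi_j\|_\infty \leq \tfrac{2}{j}\|\nabla\varphi\|_\infty \to 0$.

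First I would write the potential part. Since
\[
\int_{\mathbb{R}^2} V_\varepsilon(x) |\hat{u}_j - u|^2 \, dx
= \int_{\mathbb{R}^2} V_\varepsilon(x) (\varphi_j - 1)^2 |u|^2 \, dx,
\]
and the integrand is dominated by $V_\varepsilon(x)|u|^2 \in L^1(\mathbb{R}^2)$ (because $u \in H_\varepsilon$) while tending to $0$ pointwise, the dominated convergence theorem gives that this term goes to $0$.

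Next I would handle the magnetic gradient part. Using the product rule for the magnetic gradient, one has
\[
\nabla_{A_\varepsilon}(\varphi_j u) = \varphi_j \, \nabla_{A_\varepsilon} u + \tfrac{1}{i}(\nabla \varphi_j)\, u,
\]
since $\varphi_j$ is real-valued and $\nabla_{A_\varepsilon} = \tfrac{1}{i}\nabla - A_\varepsilon$. Hence
\[
\nabla_{A_\varepsilon}(\hat{u}_j) - \nabla_{A_\varepsilon} u = (\varphi_j - 1)\, \nabla_{A_\varepsilon} u + \tfrac{1}{i}(\nabla \varphi_j)\, u.
\]
Taking the $L^2$ norm and using the triangle inequality, the first term $\|(\varphi_j-1)\nabla_{A_\varepsilon}u\|_2 \to 0$ by dominated convergence (integrand bounded by $|\nabla_{A_\varepsilon}u|^2 \in L^1$, vanishing pointwise), while the second term is estimated by
\[
\|(\nabla\varphi_j)\, u\|_2 \leq \|\nabla\varphi_j\|_\infty \|u\|_2 \leq \tfrac{2}{j}\|\nabla\varphi\|_\infty \|u\|_2 \to 0,
\]
where $\|u\|_2 < +\infty$ follows from Lemma~\ref{lem21}. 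Combining these with the potential estimate shows $\|\hat{u}_j - u\|_\varepsilon \to 0$.

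The main subtlety—rather than a genuine obstacle—is justifying the product rule for $\nabla_{A_\varepsilon}$ and the membership $\hat{u}_j \in H_\varepsilon$ with compact support (already noted in the text following \eqref{2.2}): one must verify that multiplication by the smooth cutoff $\varphi_j$ preserves the space and that the distributional gradient splits as stated, which is routine since $\varphi_j$ is smooth with bounded derivatives. Once that product rule is in hand, every remaining estimate is a direct application of dominated convergence together with the uniform decay of $\nabla \varphi_j$, so no delicate compactness or Trudinger–Moser input is required here.
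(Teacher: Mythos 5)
Your proof is correct, and your treatment of the magnetic term takes a genuinely different route from the paper's. The paper handles $\int_{\mathbb{R}^2}|\nabla_{A_\varepsilon}(\hat{u}_j-u)|^2dx$ by expanding the magnetic gradient into its components, i.e.\ it uses the pointwise bound
\[
|\nabla_{A_\varepsilon}(\hat{u}_j-u)|^2
\leq
2|\nabla(\hat{u}_j-u)|^2+2|A_\varepsilon|^2|\hat{u}_j-u|^2
\]
and then invokes dominated convergence on the right-hand side; you instead apply the magnetic Leibniz rule
$\nabla_{A_\varepsilon}(\varphi_j u)=\varphi_j\nabla_{A_\varepsilon}u+\tfrac{1}{i}(\nabla\varphi_j)u$, so that
$\nabla_{A_\varepsilon}(\hat{u}_j-u)=(\varphi_j-1)\nabla_{A_\varepsilon}u+\tfrac{1}{i}(\nabla\varphi_j)u$, and treat the first piece by dominated convergence (dominated by $|\nabla_{A_\varepsilon}u|^2\in L^1(\mathbb{R}^2,\mathbb{R})$) and the second by the uniform bound $\|\nabla\varphi_j\|_\infty\leq \tfrac{2}{j}\|\nabla\varphi\|_\infty$. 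Your route buys something concrete: your dominating functions are manifestly integrable for \emph{every} $u\in H_\varepsilon$, whereas in the paper's decomposition the individual quantities $\int_{\mathbb{R}^2}|\nabla(\hat{u}_j-u)|^2dx$ and $\int_{\mathbb{R}^2}|A_\varepsilon|^2|\hat{u}_j-u|^2dx$ are not obviously finite a priori, since $u\in H_\varepsilon$ need not satisfy $\nabla u\in L^2$ or $A_\varepsilon u\in L^2$ separately, and $\hat{u}_j-u$ is supported in $B^c_{j/2}(0)$, where the merely continuous $A$ may be unbounded (this issue is harmless in Lemma \ref{gt520}, whose function has compact support, but is a real subtlety here). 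The paper's version is shorter and parallels the computation of Lemma \ref{gt520}; yours keeps $\nabla_{A_\varepsilon}u$ intact and is therefore self-contained and more robust. The only point you flag as needing justification, the distributional Leibniz rule, is indeed routine: since $A$ is continuous, $\nabla u=i(\nabla_{A_\varepsilon}u+A_\varepsilon u)\in L^1_{\rm loc}$, so $u\in W^{1,1}_{\rm loc}$ and multiplication by the smooth cutoff $\varphi_j$ splits the gradient as stated.
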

\begin{proof}
	Since $u\in H_{\varepsilon}$, by the Lebesgue Dominated Convergence Theorem, it is easy to see that
	\[
	\int_{\mathbb{R}^{2}}V_{\varepsilon}(x)\vert \hat{u}_{j}-u\vert^{2}dx=o_{j}(1).
	\]
	Moreover, since
	\begin{equation*}
	\begin{split}
	\int_{\mathbb{R}^{2}}\vert \nabla_{A_{\varepsilon}} (\hat{u}_{j}-u)\vert^{2} dx
	&=
	\int_{\mathbb{R}^{2}}\vert \nabla (\hat{u}_{j}-u)\vert^{2} dx
	+ \int_{\mathbb{R}^{2}} \vert A_{\varepsilon}(x)\vert^{2}\vert \hat{u}_{j}-u\vert^{2} dx\\
	&\qquad
	+2\text{Re}\int_{\mathbb{R}^{2}}i \nabla (\hat{u}_{j}-u) A_{\varepsilon}(x)\overline{(\hat{u}_{j}-u)}dx\\
	&\leq
	2\int_{\mathbb{R}^{2}}\vert \nabla (\hat{u}_{j}-u)\vert^{2} dx
	+2\int_{\mathbb{R}^{2}} \vert A_{\varepsilon}(x)\vert^{2}\vert \hat{u}_{j}-u\vert^{2}dx,
	\end{split}
	\end{equation*}
	applying again the Lebesgue Dominated Convergence Theorem, we conclude.
\end{proof}
In the spirit of \cite[Lemma 3.2]{rDL}, we have the following result for bounded sequences in $H_{\varepsilon}$.
\begin{lemma}\label{2.01}
Let $s\geq 2$ and  $\{u_{n}\}\subset H_{\varepsilon}$ be a bounded sequence. Then there exists a subsequence $\{u_{n_{j}}\}$ such that for any   $\sigma>0$ there is $r_{\sigma, s}>0$ such that, for every $r\geq r_{\sigma, s}$,
\[
\limsup_{j} \int_{B_{j}(0)\backslash B_{r}(0)}\vert u_{n_{j}}\vert^{s}dx\leq \sigma.
\]
\end{lemma}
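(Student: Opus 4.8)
The plan is to combine the continuous embedding of Lemma~\ref{lem21} with a two-step extraction: a standard diagonal argument over balls of integer radius, followed by a \emph{sparse} selection that exploits the fact that in $B_j(0)\setminus B_r(0)$ the outer radius $j$ is tied to the position of the term in the subsequence.

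First, since $s\geq 2$, Lemma~\ref{lem21} gives $H_\varepsilon\hookrightarrow L^s(\mathbb{R}^2,\mathbb{C})$, so the boundedness of $\{u_n\}$ in $H_\varepsilon$ yields $\int_{\mathbb{R}^2}|u_n|^s\,dx\leq C$ for all $n$. For every fixed $k\in\mathbb{N}^*$ the numbers $\int_{B_k(0)}|u_n|^s\,dx$ lie in $[0,C]$, so by a diagonal procedure I would extract a subsequence (not relabelled) along which
\[
\int_{B_k(0)}|u_n|^s\,dx\longrightarrow \ell_k\in[0,C]\qquad\text{for every }k\in\mathbb{N}^*.
\]
By monotonicity in $k$ the sequence $\{\ell_k\}$ is nondecreasing and bounded, hence $\ell_k\uparrow\ell$ for some $\ell\in[0,C]$.

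Next comes the crucial step. For \emph{fixed} radius $k=j$ one has $\int_{B_j(0)}|u_n|^s\,dx\to\ell_j$ as $n\to\infty$, so I can choose indices $n_1<n_2<\cdots$ with
\[
\Big|\int_{B_j(0)}|u_{n_j}|^s\,dx-\ell_j\Big|<\frac1j,
\]
which forces $\int_{B_j(0)}|u_{n_j}|^s\,dx\to\ell$ as $j\to\infty$, because $\ell_j\to\ell$. Since $\{u_{n_j}\}$ is still a subsequence of the one produced in the previous step, it also retains $\int_{B_r(0)}|u_{n_j}|^s\,dx\to\ell_r$ for every fixed $r\in\mathbb{N}^*$. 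Writing, for $j>r$,
\[
\int_{B_j(0)\setminus B_r(0)}|u_{n_j}|^s\,dx=\int_{B_j(0)}|u_{n_j}|^s\,dx-\int_{B_r(0)}|u_{n_j}|^s\,dx
\]
and letting $j\to\infty$ gives $\limsup_j\int_{B_j(0)\setminus B_r(0)}|u_{n_j}|^s\,dx=\ell-\ell_r$. Given $\sigma>0$, the convergence $\ell_r\uparrow\ell$ provides $r_{\sigma,s}$ with $\ell-\ell_{r_{\sigma,s}}<\sigma$, and since $\int_{B_j(0)\setminus B_r(0)}|u_{n_j}|^s\,dx$ is nonincreasing in $r$, the conclusion follows for all $r\geq r_{\sigma,s}$.

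The only genuinely delicate point is the sparse selection: if one diagonalized over balls alone and took the outer radius to be a free parameter, the estimate would fail, since mass can escape to infinity and $\int_{B_r^c(0)}|u_{n_j}|^s\,dx$ need not be small uniformly in $j$. Tying the outer radius to the index and choosing $n_j$ increasing fast enough so that $\int_{B_j(0)}|u_{n_j}|^s\,dx$ tracks the common limit $\ell$ is precisely what traps the escaping mass \emph{outside} $B_j(0)$; this is the step that must be arranged with care, while the remaining estimates are routine.
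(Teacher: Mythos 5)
Your proof is correct, and it establishes exactly the stated lemma. It shares with the paper's proof the decisive idea --- tying the outer radius $j$ to the position of the term in the subsequence and choosing $n_j$ so that $\int_{B_j(0)}|u_{n_j}|^s\,dx$ is within $1/j$ of its limit --- but the two proofs implement the ``convergence on balls'' step differently. The paper extracts a weakly convergent subsequence $u_n\rightharpoonup u$ and invokes the compact embedding of $H_\varepsilon$ into $L^s_{\rm loc}$ (Lemma \ref{lem21}) to identify the limit on each ball as $\int_{B_k(0)}|u|^s\,dx$; the threshold $r_{\sigma,s}$ then comes from the integrability of $|u|^s$, via $\int_{B_r^c(0)}|u|^s\,dx\leq\sigma/3$, and the conclusion follows from the three-term splitting
\[
\int_{B_j(0)\setminus B_r(0)}|u_{n_j}|^s\,dx=\int_{B_j(0)}\bigl(|u_{n_j}|^s-|u|^s\bigr)dx+\int_{B_j(0)\setminus B_r(0)}|u|^s\,dx+\int_{B_r(0)}\bigl(|u|^s-|u_{n_j}|^s\bigr)dx.
\]
You instead never introduce a limit function: you apply Bolzano--Weierstrass plus diagonalization to the scalar sequences $\int_{B_k(0)}|u_n|^s\,dx$, obtain monotone limits $\ell_k\uparrow\ell$ (finite thanks to the continuous embedding $H_\varepsilon\hookrightarrow L^s$), and read off the tail as $\ell-\ell_r$. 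Your version is more elementary and slightly more general: it needs no weak compactness and no Rellich-type local compactness, only a uniform bound in $L^s(\mathbb{R}^2)$, so it applies verbatim to any $L^s$-bounded sequence. The paper's version has the mild advantage of producing a subsequence explicitly tied to the weak limit $u$, the same object that reappears in the subsequent application (Lemma \ref{2.0}), where tail bounds on $u$ of precisely the form used in its proof are needed again. Both arguments correctly exploit that the claim concerns $B_j(0)\setminus B_r(0)$ rather than $B_r^c(0)$; your closing remark that the statement with $B_r^c(0)$ would fail because of mass escaping to infinity, and that the sparse selection traps that mass outside $B_j(0)$, is exactly the right way to understand why the lemma is formulated this way.
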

\begin{proof}
By Lemma \ref{lem21}, for each $j\in \mathbb{N}$,
$$\int_{B_{j}(0)}\vert u_{n}\vert^{s}dx\rightarrow \int_{B_{j}(0)}\vert u\vert^{s}dx \text{ as } n\rightarrow+\infty.$$
Thus there exists $\nu_{j}\in \mathbb{N}$ such that
$$
\Big|\int_{B_{j}(0)}(\vert u_{n}\vert^{s}-\vert u\vert^{s})dx\Big|< \frac{1}{j}
\quad
\text{ for all } n>\nu_{j}
$$
and, without loss of generality, we can assume that $\nu_{j+1}\geq \nu_{j}$.\\
In particular, for $n_{j}:=\nu_{j}+j$ we have
$$
\int_{B_{j}(0)}(\vert u_{n_{j}}\vert^{s}-\vert u\vert^{s})dx< \frac{1}{j}.
$$
On the other hand, there exists $r_{\sigma, s}$ such that, for every $r\geq r_{\sigma, s}$,
$$
\int_{\mathbb{R}^{2}\backslash B_{r}(0)}\vert u\vert^{s}\leq \frac{\sigma}{3}.
$$
Hence, for $j$ large enough,
\begin{align*}
 \int_{B_{j}(0)\backslash B_{r}(0)}\vert u_{n_{j}}\vert^{s}dx
&= \int_{B_{j}(0)}(\vert u_{n_{j}}\vert^{s}-\vert u\vert^{s})dx
+\int_{B_{j}(0)\backslash B_{r}(0)}\vert u\vert^{s}dx
+\int_{B_{r}(0)}(\vert u\vert^{s}-\vert u_{n_{j}}\vert^{s})dx\\
& \leq \frac{1}{j}
+\frac{\sigma}{3}
+\frac{1}{j}
\leq \sigma.
\end{align*}
\end{proof}

In the proofs of our results, the following properties will be useful.
\begin{lemma}\label{2.0}
Let $\{u_{n}\}\subset H_{\varepsilon}$ be a bounded sequence  and $\{u_{n_{j}}\}$ be a subsequence of $\{u_{n}\}$ such that $u_{n_{j}}\rightharpoonup u$ in $H_{\varepsilon}$ and $\nabla\vert u_{n_{j}}\vert\rightarrow \nabla\vert u\vert$ a.e. in $\mathbb{R}^{2}$, as $j\to +\infty$. Moreover let $\ell^2:=\limsup_j\Vert \nabla\vert u_{n_{j}}\vert \Vert_{2}^{2}$ and $\hat{u}_{j}$ be defined as in \eqref{2.2}.\\
Then, up to a subsequence still denoted by $\{u_{n_j}\}$, we have that, if $\ell^2<1/4$,
\begin{equation}
\label{apprF}
\int_{\mathbb{R}^{2}}\Big( F(\vert u_{n_{j}}\vert^{2})-F(\vert u_{n_{j}}-\hat{u}_{j}\vert^{2})-F(\vert \hat{u}_{j}\vert^{2})\Big) dx=o_{j}(1)
\end{equation}
and, for every $\phi\in H_\varepsilon$,
if $\ell^2<1/8$,
\begin{equation}
\label{apprf}
\int_{\mathbb{R}^{2}} \Big\vert f(\vert u_{n_{j}}\vert^{2})u_{n_{j}}-f(\vert u_{n_{j}}-\hat{u}_{j}\vert^{2})(u_{n_{j}}-\hat{u}_{j})- f(\vert
\hat{u}_{j}\vert^{2})\hat{u}_{j} \Big\vert^2 =o_{j}(1).
\end{equation}

\end{lemma}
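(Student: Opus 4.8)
\emph{Overview.} The plan is to read \eqref{apprF} and \eqref{apprf} as Brezis--Lieb--type splittings: I would show that the integrands converge to $0$ almost everywhere and that the relevant families are equi-integrable and tight, and then conclude by Vitali's convergence theorem. The two thresholds $\ell^2<1/4$ and $\ell^2<1/8$ are exactly what makes the equi-integrability go through the Trudinger--Moser inequality of Lemma \ref{le24}; the factor $2$ between them reflects that \eqref{apprf} carries the \emph{square} of the nonlinear term, which doubles the exponent of the dominating exponential.

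\emph{Almost everywhere convergence.} By the compact embedding of Lemma \ref{lem21}, up to a further subsequence $u_{n_j}\to u$ in $L^r_{\mathrm{loc}}(\mathbb{R}^2,\mathbb{C})$ and a.e.\ in $\mathbb{R}^2$. Since $\varphi_j\to 1$ pointwise, $\hat u_j=\varphi_j u\to u$ a.e., whence $u_{n_j}-\hat u_j\to 0$ a.e. As $F$ is continuous with $F(0)=0$ and $w\mapsto f(|w|^2)w$ is continuous and vanishes at the origin, the integrands in \eqref{apprF} and \eqref{apprf} converge to $0$ a.e. Moreover, Lemma \ref{gt521} gives $\hat u_j\to u$ strongly in $H_\varepsilon$, so $\{u_{n_j}\}$, $\{\hat u_j\}$ and $\{u_{n_j}-\hat u_j\}$ are bounded in $H_\varepsilon$, and by the diamagnetic inequality \eqref{2.1} their moduli are bounded in $H^1(\mathbb{R}^2,\mathbb{R})$.

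\emph{Equi-integrability and tails for \eqref{apprF}.} This is the heart of the matter. By the mean value theorem,
\[
F(|u_{n_j}|^2)-F(|u_{n_j}-\hat u_j|^2)=f(\xi_j)\big(|u_{n_j}|^2-|u_{n_j}-\hat u_j|^2\big),
\]
with $\xi_j$ between $|u_{n_j}-\hat u_j|^2$ and $|u_{n_j}|^2$; I would then use $\big||u_{n_j}|^2-|u_{n_j}-\hat u_j|^2\big|\le C(|u_{n_j}|\,|\hat u_j|+|\hat u_j|^2)$, the bound $\xi_j\le 2|u_{n_j}|^2+2|\hat u_j|^2$, and the growth estimate \eqref{2.8} for $f(\xi_j)$. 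After separating the product $e^{2\alpha|u_{n_j}|^2}e^{2\alpha|\hat u_j|^2}$ by Young's inequality, the integrand is dominated by terms of the form $|u_{n_j}|^a|\hat u_j|^b\big(e^{4\alpha|u_{n_j}|^2}-1\big)$ and their $u_{n_j}\leftrightarrow\hat u_j$ analogues. Since $\varphi_j=0$ for $|x|\ge j$, the whole integrand of \eqref{apprF} vanishes outside $B_j(0)$, so I would split $\mathbb{R}^2=B_r(0)\cup(B_j(0)\setminus B_r(0))$: on the fixed ball $B_r(0)$ the a.e.\ convergence is upgraded to $\int_{B_r(0)}|\cdot|\to 0$ by Vitali, the needed equi-integrability coming from a Hölder splitting in which the exponential factors are bounded by Lemma \ref{le24} and the polynomial factors by the embedding of Lemma \ref{lem21}; on the annulus $B_j(0)\setminus B_r(0)$ the tails of the polynomial factors are made uniformly small by Lemma \ref{2.01}, again combined with the Trudinger--Moser bound. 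The threshold enters in the latter: choosing $\alpha>4\pi$ close to $4\pi$, the condition $4\alpha\,\limsup_j\|\nabla|u_{n_j}|\|_2^2<4\pi$ holds with strict margin exactly because $\ell^2<1/4$, so Lemma \ref{le24} controls $\int(e^{4\alpha s|u_{n_j}|^2}-1)$ uniformly for some $s>1$ with $4\alpha s\,\ell^2<4\pi$. Letting first $j\to\infty$ and then $r\to\infty$ yields \eqref{apprF}.

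\emph{Equi-integrability for \eqref{apprf}, and the main difficulty.} The scheme is identical, applying the vector mean value theorem to $w\mapsto f(|w|^2)w$, whose differential involves both $f$ and $f'$, controlled by \eqref{f5} and by \eqref{2.10}. The only change is that the integrand is squared, so the effective exponential starts as $e^{2\alpha|u_{n_j}|^2}$ and, after the same two steps (the bound $\xi_j\le 2|u_{n_j}|^2+2|\hat u_j|^2$ and the separation of the product), becomes $e^{8\alpha|u_{n_j}|^2}$; equi-integrability now requires $8\alpha\,\ell^2<4\pi$, i.e.\ $\ell^2<1/8$ as $\alpha\downarrow 4\pi$. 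I expect the only real obstacle to be this bookkeeping of exponents: one must keep $\alpha$ strictly above $4\pi$ in order to use \eqref{2.8}/\eqref{2.10}, yet keep the effective exponent ($4\alpha$, respectively $8\alpha$) times $\ell^2$ strictly below $4\pi$ so that Lemma \ref{le24} applies and still leaves room for a Hölder exponent $s>1$; the values $1/4$ and $1/8$ are precisely the largest $\ell^2$ for which this double constraint is solvable. Finally, since \eqref{apprf} is a strong $L^2$ statement, testing against any $\phi\in H_\varepsilon$ and using the Cauchy--Schwarz inequality gives the associated weak convergence at once.
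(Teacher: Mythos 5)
Your proposal is correct, and its analytic core coincides with the paper's: the same pointwise bounds (fundamental theorem of calculus/mean value theorem plus the growth estimate \eqref{2.8}), the same doubling of exponents, and the same Trudinger--Moser bookkeeping in which the strict thresholds $\ell^2<1/4$ and $\ell^2<1/8$ are exactly what leaves H\"older room ($\alpha>4\pi$ but close, an exponent $s>1$) for Lemma \ref{le24}. The only genuine divergence is in \eqref{apprF}: there the paper does not use Vitali/tightness, but a truncation trick, introducing $G_{j}^{\zeta}$ (the positive part of the integrand minus $\zeta(\vert u_{n_{j}}\vert^{2}+\vert u_{n_{j}}\vert^{q}(e^{\alpha (\vert u_{n_{j}}\vert+\vert u\vert)^{2}}-1))$), dominating it by a $u$-dependent sequence shown to converge in $L^{1}$ via Severini--Egoroff (weak $L^{p^*}$ convergence of the exponentials), applying a generalized dominated convergence theorem, and finally letting $\zeta\to 0$ against the uniform bound $\int\vert u_{n_{j}}\vert^{q}(e^{\alpha(\cdot)^2}-1)\,dx\leq C$. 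Your scheme for \eqref{apprF} --- split into $B_r(0)$, $B_j(0)\setminus B_r(0)$ and $B_j^c(0)$ (where the integrand vanishes), Vitali on the ball, Lemma \ref{2.01} plus H\"older/Trudinger--Moser on the annulus --- is precisely how the paper proves \eqref{apprf}, and it does work for \eqref{apprF} as well, since the strict inequality $\ell^2<1/4$ yields uniform $L^{1+\delta}$ bounds and hence equi-integrability; this buys a unified treatment of both limits at the cost of verifying tightness where the paper's truncation sidesteps it. One small remark on \eqref{apprf}: your vector mean value theorem drags in $f'$ and hence (\ref{f5}); this is admissible (the fixed exponent $4\pi$ fits the same bookkeeping) but unnecessary, as the paper simply applies \eqref{2.8} to each of the three terms separately and bounds everything by powers of $\vert u_{n_j}\vert+\vert u\vert$. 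Also remember that Lemma \ref{2.01} itself requires passing to further subsequences (once per exponent used), which is allowed by the ``up to a subsequence'' in the statement.
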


\begin{proof}
Let us prove \eqref{apprF}.\\
Using \eqref{2.8} and the Young inequality, we have that, fixed $q>2$, for any $\zeta>0$ and $\alpha>4\pi$, there exists a constant $C>0$, which depends on $q$, $\alpha$, $\zeta$, such that
\begin{align*}
\vert F(\vert u_{n_{j}}\vert^{2})-F(\vert u_{n_{j}}-\hat{u}_{j}\vert^{2})\vert
&\leq 2\int_{0}^{1}\vert f(\vert u_{n_{j}}-t\hat{u}_{j}\vert^{2})\vert  \vert
u_{n_{j}}-t\hat{u}_{j}\vert\vert \hat{u}_{j}\vert dt\\
& \leq
\zeta \int_{0}^1 \vert u_{n_{j}}-t\hat{u}_{j}\vert \vert \hat{u}_{j}\vert dt
+C \int_{0}^1 \vert u_{n_{j}}-t\hat{u}_{j}\vert^{q-1}
(e^{\alpha \vert u_{n_{j}}-t\hat{u}_{j}\vert^2} -1) \vert \hat{u}_{j}\vert dt
\\
&\leq \zeta (\vert u_{n_{j}}\vert+\vert u \vert)\vert u \vert
+C (\vert u_{n_{j}}\vert+\vert u \vert)^{q-1}  (e^{\alpha (\vert u_{n_{j}}\vert+\vert u \vert)^{2}}-1) \vert u \vert \\
&\leq \zeta\big(\vert u_{n_{j}}\vert^{2}+\vert u_{n_{j}}\vert^{q}(e^{\alpha (\vert u_{n_{j}}\vert+\vert u\vert)^{2}}-1)\big)+C\big(\vert u\vert^{2}+\vert
u\vert^{q}(e^{\alpha (\vert u_{n_{j}}\vert+\vert u\vert)^{2}}-1)\big).
\end{align*}
Then, by \eqref{2.11} we have
\begin{equation}\label{Fnj}
\begin{split}
\vert F(\vert u_{n_{j}}\vert^{2})-F(\vert u_{n_{j}}-\hat{u}_{j}\vert^{2})-F(\vert \hat{u}_{j}\vert^{2})\vert
&\leq
\zeta\big(\vert u_{n_{j}}\vert^{2}+\vert
u_{n_{j}}\vert^{q}(e^{\alpha (\vert u_{n_{j}}\vert+\vert u\vert)^{2}}-1)\big)\\
&\quad+C\big(\vert u\vert^{2}+\vert u\vert^{q}(e^{\alpha (\vert u_{n_{j}}\vert+\vert u\vert)^{2}}-1)\big).
\end{split}
\end{equation}
Now, let
\begin{align*}
G_{j}^{\zeta}:=\max\Big\{\vert F(\vert u_{n_{j}}\vert^{2})-F(\vert u_{n_{j}}-\hat{u}_{j}\vert^{2})-F(\vert \hat{u}_{j}\vert^{2})\vert- \zeta(\vert
u_{n_{j}}\vert^{2}+\vert u_{n_{j}}\vert^{q}(e^{\alpha (\vert u_{n_{j}}\vert+\vert u\vert)^{2}}-1)), 0 \Big\}.
\end{align*}
Note that, by \eqref{Fnj}, we have
\begin{equation}
\label{bddGjz}
G_{j}^{\zeta}
\leq
C\big(\vert u\vert^{2}+\vert u\vert^{q}(e^{\alpha (\vert u_{n_{j}}\vert+\vert u\vert)^{2}}-1)\big)
\end{equation}
and, by Lemma \ref{gt521} and Lemma \ref{lem21},
\begin{equation}
\label{Gjzae}
G_{j}^{\zeta}\rightarrow 0
\text{ a.e. in }
\mathbb{R}^{2}
\end{equation}
and
\begin{equation}
\label{eunj}
e^{\alpha (\vert u_{n_{j}}\vert+\vert u\vert)^{2}}
\to
e^{4\alpha \vert u\vert^{2}}
\text{ a.e. in }
\mathbb{R}^{2}
\end{equation}
as $j\rightarrow +\infty$.\\
	Let us show that
	\begin{equation}
	\label{claim25}
	\vert u\vert^{q}(e^{\alpha (\vert u_{n_{j}}\vert+\vert u\vert)^{2}}-1)\rightarrow \vert u\vert^{q}(e^{4\alpha\vert u\vert^{2}}-1)\quad\text{in}\,\,
	L^{1}(\mathbb{R}^{2}, \mathbb{R}).
	\end{equation}
Since $\nabla \vert u_{n_{j}}\vert\rightarrow \nabla\vert u\vert$ a.e. in $\mathbb{R}^{2}$ as $j\rightarrow + \infty$, by the Fatou's Lemma, we deduce that
\begin{equation*}
	\Vert \nabla\vert u\vert\Vert_2^{2}
	\leq
	\liminf_j\Vert \nabla\vert u_{n_{j}}\vert\Vert_2^{2}\leq \ell^2
\end{equation*}
and so, for any fixed $\kappa\in(0,1-4\ell^2)$, we have that for $j$ large enough,
\begin{equation}\label{1/4}
	\| \nabla (\vert u_{n_{j}}\vert+ \vert u\vert) \|_2^{2}
	\leq
	2 	\| \nabla \vert u_{n_{j}}\vert \|_2^{2}
	+2	\|\nabla \vert u\vert \|_2^{2}
	<4\ell^2+\kappa.
\end{equation}
Let
\[
p^*\in\Big(1,\frac{1}{4\ell^2+\kappa}\Big),
\qquad
\alpha\in\Big(4\pi,\frac{4\pi}{p^*(4\ell^2+\kappa)}\Big).
\]
By the inequality
\begin{equation}\label{ineqe}
	(e^{t}-1)^{s}\leq e^{ts}-1, \text{ for }s>1 \text{ and } t\geq 0,
\end{equation}
inequality \eqref{1/4}, and Lemma  \ref{le24},
we get
\begin{equation}
	\label{bddinte}
	\int_{\mathbb{R}^{2}}(e^{\alpha (\vert u_{n_{j}}\vert+\vert u\vert)^{2}}-1)^{p^*}dx
	\leq
	\int_{\mathbb{R}^{2}}(e^{(4\ell^2+\kappa) p^* \alpha \Big(\frac{\vert u_{n_{j}}\vert+\vert u\vert}{\sqrt{4\ell^2+\kappa}}\Big)^{2}}-1)dx
	\leq C.
\end{equation}
Thus, up to a subsequence, and applying the Severini-Egoroff Theorem \cite[Theorem 4.29]{rBr},
	\[
	(e^{\alpha (\vert u_{n_{j}}\vert+\vert u\vert)^{2}}-1)
	\rightharpoonup
	(e^{4\alpha \vert u\vert^{2}}-1)
	\text{ in } L^{p^*}(\mathbb{R}^{2}, \mathbb{R})
	\]
	and, since $\vert u\vert^{q}\in L^{q^*}(\mathbb{R}^{2})$, where $q^*$ is the conjugate exponent of $p^*$, we get \eqref{claim25}.\\
Hence, by \eqref{bddGjz}, \eqref{Gjzae}, \eqref{eunj}, and \eqref{claim25}, applying a variant of the Lebesgue Dominated Convergence Theorem, we deduce that
\begin{align*}
\int_{\mathbb{R}^{2}}G_{j}^{\zeta}dx\rightarrow 0\quad \text{as}\,\,j\rightarrow +\infty.
\end{align*}
On the other hand, by the definition of $G_{j}^{\zeta}$,
\begin{align*}
\vert F(\vert u_{n_{j}}\vert^{2})
-F(\vert u_{n_{j}}-\hat{u}_{j}\vert^{2})
-F(\vert\hat{u}_{j}\vert^{2})\vert
\leq
\zeta (\vert u_{n_{j}}\vert^{2}+\vert u_{n_{j}}\vert^{q}(e^{\alpha (\vert u_{n_{j}}\vert+\vert u\vert)^{2}}-1))+G_{j}^{\zeta}.
\end{align*}
Using the H\"older inequality, the boundedness of the sequence $\{u_n\}$, Lemma \ref{lem21}, and \eqref{bddinte}, we have
\[
	\int_{\mathbb{R}^{2}}\Big(\vert u_{n_{j}}\vert^{q}(e^{\alpha (\vert u_{n_{j}}\vert+\vert u\vert)^{2}}-1)\Big) dx
	\leq
	\|u_{n_j}\|_{qq^*}^q
	\left[
	\int_{\mathbb{R}^{2}}\vert (e^{\alpha (\vert u_{n_{j}}\vert+\vert u\vert)^{2}}-1)^{p^*}dx
	\right]^{\frac{1}{p^*}}
	\leq C.
	\]
Thus,  the arbitrariness of $\zeta$ allows to conclude the proof of \eqref{apprF}.\\
Now we prove \eqref{apprf}.\\
First observe that, by Lemma \ref{lem21} and Lemma \ref{gt521}, we have that
\[
\Big\vert f(\vert u_{n_{j}}\vert^{2})u_{n_{j}}-f(\vert u_{n_{j}}-\hat{u}_{j}\vert^{2})(u_{n_{j}}-\hat{u}_{j})- f(\vert
\hat{u}_{j}\vert^{2})\hat{u}_{j} \Big\vert \to 0
\text{ a.e. in } \mathbb{R}^2.
\]
Moreover, using also \eqref{2.8}, we have that fixed $q>2$, for any $\zeta>0$ and $\alpha>4\pi$, there exists a constant $C>0$, which depends on $q$, $\alpha$, $\zeta$, such that
\begin{align*}
&\Big\vert f(\vert u_{n_{j}}\vert^{2})u_{n_{j}}-f(\vert u_{n_{j}}-\hat{u}_{j}\vert^{2})(u_{n_{j}}-\hat{u}_{j})- f(\vert
\hat{u}_{j}\vert^{2})\hat{u}_{j} \Big\vert\\
&\qquad\leq
\zeta |u_{n_{j}}|
+ C |u_{n_{j}}|^{q-1} (e^{\alpha|u_{n_{j}}|^2}-1)
+ \zeta |u_{n_{j}}-\hat{u}_{j}|
+ C |u_{n_{j}}-\hat{u}_{j}|^{q-1} (e^{\alpha|u_{n_{j}}-\hat{u}_{j}|^2}-1)\\
&\qquad\qquad
+ \zeta |\hat{u}_{j}|
+ C |\hat{u}_{j}|^{q-1} (e^{\alpha|\hat{u}_{j}|^2}-1)\\
&\qquad\leq
C\left[
|u_{n_{j}}| + |u|
+ (|u_{n_{j}}| + |u|)^{q-1} (e^{\alpha(|u_{n_{j}}|+|u|)^2}-1)
\right]
\end{align*}
and so
\begin{equation}
\label{fnj}
\begin{split}
&\Big\vert f(\vert u_{n_{j}}\vert^{2})u_{n_{j}}-f(\vert u_{n_{j}}-\hat{u}_{j}\vert^{2})(u_{n_{j}}-\hat{u}_{j})- f(\vert
\hat{u}_{j}\vert^{2})\hat{u}_{j} \Big\vert^2\\
&\qquad\leq
C\left[
|u_{n_{j}}|^2 + |u|^2
+ (|u_{n_{j}}| + |u|)^{2(q-1)} (e^{\alpha(|u_{n_{j}}|+|u|)^2}-1)^2
\right].
\end{split}
\end{equation}
Arguing as before, for any fixed $\kappa\in(0, \frac{1}{2}-4\ell^2)$, if
	\[
	p^*\in\Big(1,\frac{1}{2(4 \ell^2+\kappa)}\Big)
	\text{ and }
	\alpha\in\Big(4\pi,\frac{2\pi}{p^*(4\ell^2+\kappa)}\Big),
	\]
	we have that
	\begin{equation}
		\label{bddintep}
		\int_{\mathbb{R}^{2}}(e^{\alpha (\vert u_{n_{j}}\vert+\vert u\vert)^{2}}-1)^{2p^*}dx
		\leq \int_{\mathbb{R}^{2}}(e^{2(4\ell^2+\kappa) p^*  \alpha \Big(\frac{\vert u_{n_{j}}\vert+\vert u\vert}{\sqrt{4\ell^2+\kappa}}\Big)^{2}}-1)dx
		\leq C.
\end{equation}
By Lemma \ref{2.01} for $s=2$, from $\{u_{n_j}\}$ we can extract a first subsequence, still denoted by $\{u_{n_j}\}$, such that for any  $\sigma>0$ there is $r_{\sigma, 2}>0$ such that, for every $r\geq r_{\sigma, 2}$,
\begin{equation}
\label{Bjr2sigma}
\limsup_{j} \int_{B_{j}(0)\backslash B_{r}(0)}\vert u_{n_{j}}\vert^{2}dx\leq \sigma.
\end{equation}
Applying again Lemma \ref{2.01} for $s=2(q-1)q^*$, where $q^*$ is the conjugate exponent of $p^*$, from $\{u_{n_j}\}$ we can extract a further subsequence, still denoted by $\{u_{n_j}\}$, such that for any  $\sigma>0$ there is $r_{\sigma, 2(q-1)q^*}>0$ such that, for every $r\geq r_{\sigma, 2(q-1)q^*}$,
\begin{equation}
\label{Bjrqsigma}
\limsup_{j} \int_{B_{j}(0)\backslash B_{r}(0)}\vert u_{n_{j}}\vert^{2(q-1)q^*}dx\leq \sigma.
\end{equation}
Let now $\sigma>0$ be arbitrary and $r\geq \max\{r_{\sigma, 2}, r_{\sigma, 2(q-1)q^*}\}$ large enough such that
\begin{equation}
\label{Brcsigma}
\int_{B^{c}_{r}(0)}\vert u\vert^{2}dx\leq \sigma
\text{ and }
\int_{B^{c}_{r}(0)}\vert u\vert^{2(q-1)q^*}dx\leq \sigma.
\end{equation}
Since, $2(q-1)q^*>2$ and by Lemma \ref{lem21},
there exists $w \in L^{2(q-1)q^*}(B_r(0))$ such that $|u_{n_{j}}|\leq w$ a.e. in $B_r(0)$.\\
Thus, by \eqref{fnj}, for every $s\geq 1$ and a.e. $x\in B_r(0)$,
\begin{align*}
&\Big\vert f(\vert u_{n_{j}}\vert^{2})u_{n_{j}}-f(\vert u_{n_{j}}-\hat{u}_{j}\vert^{2})(u_{n_{j}}-\hat{u}_{j})- f(\vert
\hat{u}_{j}\vert^{2})\hat{u}_{j} \Big\vert^2\\
&\qquad\leq
C\left[
w^2 + |u|^2
+ (w + |u|)^{2(q-1)} (e^{\alpha(|u_{n_{j}}|+|u|)^2}-1)^2
\right].
\end{align*}
Then, by \eqref{bddintep}, applying the H\"older inequality,
\[
\int_{B_{r}(0)} (w + |u|)^{2(q-1)} (e^{\alpha(|u_{n_{j}}|+|u|)^2}-1)^2 dx
\leq
C\left( \int_{B_{r}(0)} (w + |u|)^{2(q-1)q^*} dx\right)^{1/q^*},
\]
and so
\[
w^2 + |u|^2
+ (w + |u|)^{2(q-1)} (e^{\alpha(|u_{n_{j}}|+|u|)^2}-1)^2
\in L^{1}(B_r(0)).
\]
Hence, applying the Lebesgue Dominated Convergence Theorem, we deduce that
\[
\int_{B_r(0)} \Big\vert f(\vert u_{n_{j}}\vert^{2})u_{n_{j}}-f(\vert u_{n_{j}}-\hat{u}_{j}\vert^{2})(u_{n_{j}}-\hat{u}_{j})- f(\vert
\hat{u}_{j}\vert^{2})\hat{u}_{j} \Big\vert^{2}dx \to 0
\text{ as } j\to +\infty.
\]
On the other hand, by \eqref{fnj}, \eqref{bddintep}, \eqref{Bjr2sigma}, \eqref{Bjrqsigma}, and \eqref{Brcsigma}, for $j$ large enough,
\begin{align*}
&\int_{B_{j}(0)\backslash B_{r}(0)} \Big\vert f(\vert u_{n_{j}}\vert^{2})u_{n_{j}}-f(\vert u_{n_{j}}-\hat{u}_{j}\vert^{2})(u_{n_{j}}-\hat{u}_{j})- f(\vert
\hat{u}_{j}\vert^{2})\hat{u}_{j} \Big\vert^{2}dx\\
&\qquad\leq
C\int_{B_{j}(0)\backslash B_{r}(0)} \Big(
\vert  u_{n_{j}}\vert^{2}
+\vert u\vert^{2}
+(\vert  u_{n_{j}}\vert+\vert  u\vert)^{2(q-1)}
(e^{2\alpha (\vert u_{n_{j}}\vert+\vert u\vert)^{2}}-1)\Big)dx\\
&\qquad\leq
C\left[\sigma
+\Big( \int_{B_{j}(0)\backslash B_{r}(0)} \vert  u_{n_{j}}\vert^{2(q-1)q^*}dx\Big)^{1/q^*}
+\Big( \int_{B_{j}(0)\backslash B_{r}(0)} \vert  u\vert^{2(q-1)q^*}dx\Big)^{1/q^*}
\right]\\
&\qquad\leq
C(\sigma +\sigma^{1/q^*}).
\end{align*}
Hence, since $\hat{u}_{j}=0$ in $B^{c}_{j}(0)$ for any $j\geq 1$, the arbitrariness of $\sigma>0$ allows to conclude.
\end{proof}

\section{The limit problem}\label{Secthree}

As it is usual for this type of questions, we consider the {\em limit} problem
\begin{align}
\label{3.2}
-\Delta u+\mu u=f(u^{2})u, \quad u:\mathbb{R}^{2}\to\mathbb{R}, \quad \mu>0.
\end{align}

We have already considered this problem in \cite[Section 2]{rDJ} for a particular value of $\mu$, namely for $\mu=V_0$. Here, due to the assumptions on $V$, we need some generalizations.

First of all we observe that solutions of \eqref{3.2} can be found as critical points of the $C^1$ functional
\begin{equation*}
I_{\mu}(u):=\frac{1}{2}\int_{\mathbb{R}^{2}}(\vert \nabla u\vert^{2}+\mu u^{2})dx -\frac{1}{2}\int_{\mathbb{R}^{2}}F(u^{2})dx
\end{equation*}
defined in $H^{1}(\mathbb{R}^{2}, \mathbb{R})$. Moreover, if
\begin{equation*}
\mathcal{N}_{\mu}:=\{u\in H^{1}(\mathbb{R}^{2}, \mathbb{R})\setminus\{0\}: I'_{\mu}(u)[u]=0\}
\quad\text{ and }\quad
c_{\mu}:=\inf_{u\in \mathcal{N}_{\mu}} I_{\mu}(u),
\end{equation*}
using (\ref{f1}) and (\ref{f4}), we have that for each fixed $u\in  H^{1}(\mathbb{R}^{2}, \mathbb{R})\backslash\{0\}$, there is a unique $t(u)>0$ such that
\begin{equation}\label{maxmu}
I_{\mu}(t(u)u)=\max_{t\geq 0}I_{\mu}(tu)\quad\text{and}\quad t(u)u\in \mathcal{N}_{\mu}.
\end{equation}
Then, arguing as in \cite[Lemma 4.1 and Theorem 4.2]{rW}, we also have that
\begin{equation}\label{cmu}
0<c_{\mu}=\inf_{u\in  H^{1}(\mathbb{R}^{2}, \mathbb{R})\backslash\{0\}} \max_{t\geq 0}I_{\mu}(tu).
\end{equation}
Observe finally that, arguing as in \cite[Lemma 2.3]{rDJ} where we consider only the case $\mu=V_0$, we can get that there exists a constant $K>0$ such that, for all $u\in \mathcal{N}_{\mu}$, $\| u\|_{\mu}\geq K$.

In particular, if $\mu\in(0,V^*]$, the assumptions on $f$ allow to get  the existence of a positive {\em ground state} solution $\omega_\mu$ of \eqref{3.2}, namely such that $I_{\mu}(\omega_\mu)\leq I_{\mu}(v)$ for all positive solutions $v \in H^1(\mathbb{R}^2,\mathbb{R})$ of \eqref{3.2}.\\

\begin{lemma}\label{gslimit}
For every $\mu\in(0,V^*]$, there exists a positive and radially symmetric ground state solution $\omega_\mu \in H^1(\mathbb{R}^2,\mathbb{R})$ for problem \eqref{3.2}.
\end{lemma}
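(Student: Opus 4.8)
The plan is to realize $\omega_\mu$ as a minimizer of $I_\mu$ over the Nehari manifold $\mathcal{N}_\mu$, working directly in the subspace $H^1_{\mathrm{rad}}(\mathbb{R}^2,\mathbb{R})$ of radial functions so that the radial symmetry of the ground state comes for free. The point is that the radial minimization produces the same level $c_\mu$: restricting to radial functions can only raise the infimum in \eqref{cmu}, while for any $u$ its Schwarz symmetrization $u^*$ satisfies $\|u^*\|_2=\|u\|_2$, $\int F(t^2(u^*)^2)\,dx=\int F(t^2u^2)\,dx$ (the distribution is preserved) and $\|\nabla u^*\|_2\leq\|\nabla u\|_2$, so $\max_{t\geq0}I_\mu(tu^*)\leq\max_{t\geq0}I_\mu(tu)$ and the radial level is $\leq c_\mu$ as well. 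The existence of a minimizer then rests on two ingredients: a level estimate placing $c_\mu$ strictly below the Trudinger--Moser threshold, and a compactness argument for a radial minimizing (equivalently Palais--Smale) sequence.

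First I would prove the level estimate. By (\ref{f4}), integrating $f'(t)\geq\tfrac{p-2}{2}C_p t^{(p-4)/2}$ twice and using (\ref{f1}) gives $F(t)\geq\tfrac{2C_p}{p}t^{p/2}$, hence $I_\mu(u)\leq\tfrac12\|u\|_\mu^2-\tfrac{C_p}{p}\|u\|_p^p$. Maximizing the right-hand side along $\{tu:t>0\}$ and invoking \eqref{cmu} yields
\[
c_\mu\leq \frac{p-2}{2p}\,C_p^{-2/(p-2)}\Big(\inf_{u\neq0}\frac{\|u\|_\mu^2}{\|u\|_p^2}\Big)^{p/(p-2)}.
\]
A scaling computation $v\mapsto v(\sqrt\mu\,\cdot)$ identifies $\inf_{u\neq0}\|u\|_\mu^2/\|u\|_p^2$ with $\mu^{2/p}S_p$, and since $\mu\leq V^*$ the strict lower bound imposed on $C_p$ in (\ref{f4}) is exactly what is needed to conclude $c_\mu<\frac{\theta-2}{16\theta}$.

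Next I would address compactness. Taking a radial minimizing sequence $\{u_n\}$ with $I'_\mu(u_n)\to0$, the Ambrosetti--Rabinowitz-type condition (\ref{f3}) gives boundedness through $c_\mu+o(1)=I_\mu(u_n)-\tfrac1\theta I'_\mu(u_n)[u_n]\geq(\tfrac12-\tfrac1\theta)\|u_n\|_\mu^2$, whence $\limsup_n\|\nabla u_n\|_2^2\leq\frac{2\theta}{\theta-2}c_\mu<\tfrac18<1$. With room to spare below $1$, choosing $\alpha$ slightly above $4\pi$ and $s>1$ with $\alpha s\limsup_n\|\nabla u_n\|_2^2<4\pi$, inequality \eqref{ineqe} together with Lemma \ref{le24} gives a uniform bound on $\int_{\mathbb{R}^2}(e^{\alpha u_n^2}-1)^s\,dx$. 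The compact embedding $H^1_{\mathrm{rad}}(\mathbb{R}^2,\mathbb{R})\hookrightarrow L^q(\mathbb{R}^2,\mathbb{R})$ for $q>2$ then lets me pass to the limit, via \eqref{2.10}--\eqref{2.11}, the H\"older inequality and a Vitali-type argument, in both $\int f(u_n^2)u_n^2\,dx$ and $\int f(u_n^2)u_n\phi\,dx$. That the weak limit $u$ is nontrivial follows from the uniform bound $\|u_n\|_\mu\geq K$ on $\mathcal{N}_\mu$ combined with \eqref{2.10}: if $u=0$ the subcritical term $\int|u_n|^q(e^{\alpha u_n^2}-1)\,dx$ would vanish along the subsequence and force $\|u_n\|_\mu\to0$, a contradiction. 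Thus $u\in\mathcal{N}_\mu$ is a nontrivial weak solution, and weak lower semicontinuity with Fatou's lemma give $I_\mu(u)\leq\liminf I_\mu(u_n)=c_\mu$, hence $I_\mu(u)=c_\mu$; since every nontrivial solution lies on $\mathcal{N}_\mu$, $u$ is a ground state in the stated sense.

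Finally, since $f(t)=0$ for $t\leq0$ by (\ref{f1}), replacing $u_n$ by $|u_n|$ leaves every integral unchanged, so the minimizer may be taken nonnegative; elliptic regularity and the strong maximum principle applied to $-\Delta u+\mu u=f(u^2)u\geq0$ then give $\omega_\mu:=u>0$, while radial symmetry is built in by the choice of the radial class. I expect the main obstacle to be the compactness step: the exponential critical growth destroys compactness of the nonlinear term on all of $\mathbb{R}^2$, and only the combination of the sharp level estimate (keeping $\|\nabla u_n\|_2^2$ bounded well below $1$) with the Trudinger--Moser inequality and radial compactness restores it.
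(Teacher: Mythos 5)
Your proposal is correct in outline, but it takes a genuinely different route from the paper. The paper's own proof is essentially a citation: it applies the ground-state existence result of Alves--Souto--Montenegro \cite[Corollary 1.5]{rASM} to $h(t):=f(t^{2})t/\mu$, exactly as in \cite[Lemma 2.2]{rDJ}, so the only thing actually verified there is the hypothesis $h(t)\geq\lambda t^{p-1}$ with $\lambda>\big(\frac{p-2}{p}\big)^{\frac{p-2}{2}}S_{p}^{p/2}$, which follows from (\ref{f4}) together with $\mu\leq V^{*}$; positivity and radial symmetry are inherited from that reference. You instead rebuild the existence proof from scratch: Nehari minimization in $H^{1}_{\mathrm{rad}}(\mathbb{R}^{2},\mathbb{R})$, Schwarz symmetrization to identify the radial level with $c_{\mu}$, the level estimate $c_{\mu}<\frac{\theta-2}{16\theta}$ (which the paper proves separately, for $\mu=V^{*}$, in Lemma \ref{le32} and then propagates via the monotonicity \eqref{moncmu}), and the resulting bound $\limsup_{n}\|\nabla u_{n}\|_{2}^{2}<1/8$ to restore compactness through Trudinger--Moser and the compact radial embedding --- which is precisely the mechanism the paper deploys later, in Lemma \ref{lemFat}, but only to analyze sequences at level $c_{\mu}$, not to produce the ground state itself. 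What your route buys is self-containedness and a transparent display of how (\ref{f4}) forces the energy level below the Trudinger--Moser threshold; what the paper's route buys is brevity, since \cite{rASM} has already handled the delicate compactness issues. Three small points you should still patch: a critical point of $I_{\mu}$ restricted to $H^{1}_{\mathrm{rad}}$ solves \eqref{3.2} only after invoking the principle of symmetric criticality (or a density argument); in the Trudinger--Moser step you should normalize by a fixed quantity like $\sqrt{\ell^{2}+\kappa}$, $\kappa>0$, rather than by $\|\nabla u_{n}\|_{2}$ itself, so that Lemma \ref{le24} applies with a uniform $L^{2}$ bound $M$; and the replacement $u_{n}\mapsto|u_{n}|$ should be performed on the limiting minimizer (which stays on $\mathcal{N}_{\mu}$, a natural constraint) rather than on the Palais--Smale sequence, since $I'_{\mu}(|u_{n}|)$ need not be small even when $I'_{\mu}(u_{n})\to 0$.
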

\begin{proof}
We can argue exactly as in \cite[Lemma 2.2]{rDJ}, applying \cite[Corollary 1.5]{rASM} to the function $h(t):= f(t^2)t/\mu$. Here we observe only that the condition
\[
\text{there exist } \lambda>0 \text{ and } p>2\text{ such that } h(t)\geq \lambda t^{p-1}\text{ for } t\geq 0\text{ and }
\lambda>\left(\frac{p-2}{p}\right)^\frac{p-2}{2} S_p^{p/2}
\]
follows by (\ref{f4}), since
\[
h(t)\geq \frac{C_p}{\mu} t^{p-1}
\quad
\text{ for all } t\geq 0
\]
and
\[
\frac{C_p}{\mu}
>
\Big(\frac{8\theta}{\theta-2}\Big)^\frac{p-2}{2}
\Big(\frac{p-2}{p}\Big)^\frac{p-2}{2}
S_{p}^{p/2}
>
8^\frac{p-2}{2}
\Big(\frac{p-2}{p}\Big)^\frac{p-2}{2}
S_{p}^{p/2}
>
\Big(\frac{p-2}{p}\Big)^\frac{p-2}{2}
S_{p}^{p/2}.
\]
By standard arguments we can assume $\omega_\mu$ positive. We refer to \cite[Lemma 2.2]{rDJ} for the remaining details.
\end{proof}

Moreover the ground state $\omega_\mu$ decays exponentially at infinity with its gradient and belongs to $C^{2}(\mathbb{R}^{2}, \mathbb{R})\cap L^\infty(\mathbb{R}^{2}, \mathbb{R})$ (see also \cite[Proposition 2.1]{rZD}) and, for any $\mu_{1}, \mu_{2}\in(0,V^*]$ with $0<\mu_{1}<\mu_{2}$, we have
	\begin{equation}
		\label{moncmu}
	c_{\mu_1}
	=	I_{\mu_{1}} (\omega_{\mu_{1}})
	\leq \max_{t\geq 0} I_{\mu_{1}}(t\omega_{\mu_{2}})
	< \max_{t\geq 0} I_{\mu_{2}}(t\omega_{\mu_{2}})
	= I_{\mu_{2}}(\omega_{\mu_{2}})
	=c_{\mu_{2}}.
	\end{equation}

Let now $S_p>0$ be defined as in (\ref{f4}) and, for every $\mu>0$,
\[
S_{p}^\mu
:=\inf_{u\in H^{1}(\mathbb{R}^{2}, \mathbb{R})\backslash\{0\}}\frac{\Vert \nabla u\Vert_2^{2}+\mu\Vert u\Vert_2^{2}}{\Vert u\Vert_{p}^{2}}.
\]
We have
\begin{lemma}\label{Splemma}
$S_p^\mu=\mu^{2/p}S_p$.
\end{lemma}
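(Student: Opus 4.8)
The plan is to establish the identity by an elementary dilation argument, exploiting the fact that in dimension two the Dirichlet integral $\Vert\nabla u\Vert_2^2$ is scale invariant, whereas the $L^2$ and $L^p$ norms are not. First I would fix an arbitrary $v\in H^1(\mathbb{R}^2,\mathbb{R})\setminus\{0\}$ and, given $\mu>0$, introduce the rescaled function $u(x):=v(\sqrt{\mu}\,x)$. The map $v\mapsto u$ is a bijection of $H^1(\mathbb{R}^2,\mathbb{R})\setminus\{0\}$ onto itself, with inverse $u\mapsto u(\cdot/\sqrt\mu)$; this is precisely what will allow me to identify the two infima at the end.

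Next I would perform the change of variables $y=\sqrt\mu\,x$, so that $dx=\mu^{-1}\,dy$ in $\mathbb{R}^2$, and record the three resulting scaling relations
\[
\Vert\nabla u\Vert_2^2=\Vert\nabla v\Vert_2^2,
\qquad
\Vert u\Vert_2^2=\mu^{-1}\Vert v\Vert_2^2,
\qquad
\Vert u\Vert_p^2=\mu^{-2/p}\Vert v\Vert_p^2.
\]
The first is the scale invariance of the gradient energy, valid exactly because the space dimension is $2$. The choice of dilation factor $\sqrt\mu$ is dictated by the wish to absorb the coefficient $\mu$ in front of $\Vert u\Vert_2^2$: substituting these relations gives
\[
\frac{\Vert\nabla u\Vert_2^2+\mu\Vert u\Vert_2^2}{\Vert u\Vert_p^2}
=\mu^{2/p}\,\frac{\Vert\nabla v\Vert_2^2+\Vert v\Vert_2^2}{\Vert v\Vert_p^2},
\]
since $\Vert\nabla u\Vert_2^2+\mu\Vert u\Vert_2^2=\Vert\nabla v\Vert_2^2+\Vert v\Vert_2^2$ while the denominator contributes the factor $\mu^{2/p}$.

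Finally, taking the infimum over $v\in H^1(\mathbb{R}^2,\mathbb{R})\setminus\{0\}$ on both sides, and using the bijectivity of $v\mapsto u$ to replace the infimum over $v$ on the left by the infimum over $u$, I recover $S_p^\mu$ on the left and $\mu^{2/p}S_p$ on the right (recall $S_p=S_p^1$), which is the claimed equality.

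I do not expect a genuine obstacle here: the argument is entirely routine, and the only points requiring a little care are the correct bookkeeping of the exponents in the two–dimensional change of variables — in particular the scale invariance of the gradient term, which holds precisely because $N=2$ — and the observation that the dilation is a bijection of $H^1(\mathbb{R}^2,\mathbb{R})\setminus\{0\}$, which is exactly what legitimizes identifying the two infima.
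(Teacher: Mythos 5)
Your proof is correct and uses essentially the same dilation argument as the paper: the change of variables $x\mapsto\sqrt{\mu}\,x$, the scale invariance of the Dirichlet integral in dimension two, and the resulting factor $\mu^{2/p}$ in the quotient. The only cosmetic difference is packaging — the paper derives the two inequalities $S_p^\mu\geq\mu^{2/p}S_p$ and $S_p^\mu\leq\mu^{2/p}S_p$ separately via the two inverse rescalings, whereas you obtain the equality in one step by observing that the dilation is a bijection of $H^1(\mathbb{R}^2,\mathbb{R})\setminus\{0\}$ onto itself.
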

\begin{proof}
Let $u\in H^{1}(\mathbb{R}^{2}, \mathbb{R})\backslash\{0\}$.\\
If $\tilde{u}:=u(\mu^{-1/2}\cdot)$, we have
\[
\frac{\Vert \nabla u \Vert_2^{2} + \mu \Vert u \Vert_2^{2}}{\Vert u \Vert_{p}^{2}}
=
\mu^{2/p} \frac{\Vert \nabla \tilde{u}\Vert_2^{2}+\Vert \tilde{u}\Vert_2^{2}}{\Vert \tilde{u}\Vert_{p}^{2}}
\geq \mu^{2/p} S_p
\]
and, passing to the infimum, we get $S_p^\mu\geq \mu^{2/p} S_p$.\\
On the other hand, if $\tilde{u}:=u(\mu^{1/2}\cdot)$, we have
\[
\frac{\Vert \nabla u \Vert_2^{2} + \Vert u \Vert_2^{2}}{\Vert u \Vert_{p}^{2}}
=
\mu^{-2/p} \frac{\Vert \nabla \tilde{u}\Vert_2^{2}+ \mu \Vert \tilde{u}\Vert_2^{2}}{\Vert \tilde{u}\Vert_{p}^{2}}
\geq \mu^{-2/p} S_p^\mu
\]
and, passing to the infimum, we get $S_p^\mu\leq \mu^{2/p} S_p$, concluding the proof.
\end{proof}
\begin{remark}\label{RemarkSp}
Observe that, by \cite[Theorem 1.35]{rW}, $S_p$ is achieved by a $C^2$ positive and radially symmetric function, and, arguing as in the proof of Lemma \ref{Splemma}, this holds for $S_p^\mu$ too.
\end{remark}

Now we give an upper estimate of the ground state energy for \eqref{3.2} with $\mu=V^*$, with $V^*$ defined in \eqref{SpV*}.
\begin{lemma}\label{le32}
The minimax level $c_{V^*}$ belongs to the interval $\displaystyle \Big(0, \frac{\theta-2}{16 \theta}\Big)$.
\end{lemma}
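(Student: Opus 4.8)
The plan is to prove the two bounds separately. The lower bound $c_{V^*}>0$ is already contained in \eqref{cmu}, so the real content of the statement is the upper estimate $c_{V^*}<\frac{\theta-2}{16\theta}$, which I would obtain by exhibiting a cheap competitor in the mountain-pass characterization \eqref{cmu}. The whole proof is essentially one explicit computation, arranged so that the constants appearing in \ref{f4} cancel exactly.

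First I would extract from \ref{f4} a clean power lower bound on $F$. Integrating $f'(t)\geq \frac{p-2}{2}C_p t^{(p-4)/2}$ from $0$ (using $f(0)=0$, which follows from \ref{f1} and continuity) gives $f(t)\geq C_p t^{(p-2)/2}$, and a second integration yields
\[
F(t)\geq \frac{2C_p}{p}\,t^{p/2}\quad\text{for all } t\geq 0,
\]
hence $F(u^2)\geq \frac{2C_p}{p}|u|^p$ pointwise. Next, for an arbitrary $u\in H^1(\mathbb{R}^2,\mathbb{R})\setminus\{0\}$ I would feed this into the functional: writing $A:=\|\nabla u\|_2^2+V^*\|u\|_2^2$ and $B:=\|u\|_p^p$, the estimate above gives $I_{V^*}(tu)\leq \frac{t^2}{2}A-\frac{C_p}{p}t^p B$ for every $t\geq 0$. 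Maximizing the right-hand side in $t$ is an elementary one-variable computation (the maximizer is $t_*=(A/(C_pB))^{1/(p-2)}$) and produces
\[
\max_{t\geq0}I_{V^*}(tu)\leq \frac{p-2}{2p}\,C_p^{-2/(p-2)}\Big(\frac{\|\nabla u\|_2^2+V^*\|u\|_2^2}{\|u\|_p^2}\Big)^{p/(p-2)}.
\]

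Since the right-hand side is increasing in the Rayleigh quotient, I would take the infimum over $u$ — equivalently, evaluate along the optimizer of $S_p^{V^*}$ guaranteed by Remark \ref{RemarkSp} — and invoke \eqref{cmu} to get
\[
c_{V^*}\leq \frac{p-2}{2p}\,C_p^{-2/(p-2)}\big(S_p^{V^*}\big)^{p/(p-2)}.
\]
Finally I would substitute $S_p^{V^*}=(V^*)^{2/p}S_p$ from Lemma \ref{Splemma}, rewrite the bound as $\frac{p-2}{2p}\big(V^*S_p^{p/2}/C_p\big)^{2/(p-2)}$, and insert the hypothesis on $C_p$ from \ref{f4}. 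The inequality in \ref{f4}, raised to the positive power $2/(p-2)$, gives $\big(V^*S_p^{p/2}/C_p\big)^{2/(p-2)}<\frac{\theta-2}{8\theta}\cdot\frac{p}{p-2}$, and the factor $\frac{p}{p-2}$ cancels against $\frac{p-2}{2p}$, leaving $c_{V^*}<\frac{\theta-2}{16\theta}$.

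There is no genuine analytic obstacle here; the only thing demanding care is the exponent bookkeeping in the maximization over $t$ and in raising the $C_p$-inequality to the power $2/(p-2)$, so that the dimensional constants cancel precisely and the strict inequality of \ref{f4} is propagated to the final strict bound. I would double-check that the chosen power lower bound on $F$ is exactly the one dual to the definition of $S_p^{V^*}$, which is what makes the threshold constant $\frac{\theta-2}{16\theta}$ emerge cleanly rather than merely some unspecified small number.
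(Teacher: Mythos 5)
Your proposal is correct and follows essentially the same route as the paper: integrate \eqref{f4} twice to get $F(t)\geq \tfrac{2C_p}{p}t^{p/2}$, bound $\max_{t\geq 0}I_{V^*}(tu)$ by the explicit maximum of $\tfrac{t^2}{2}A-\tfrac{C_p}{p}t^pB$, pass to $S_p^{V^*}$ via \eqref{cmu}, and conclude with Lemma \ref{Splemma} and the size hypothesis on $C_p$. The only (immaterial) difference is that the paper evaluates directly at the minimizer $\omega^*$ of $S_p^{V^*}$ from Remark \ref{RemarkSp}, whereas you take the infimum over all $u$, which works just as well and does not even require the minimizer to exist.
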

\begin{proof}
As observed in Remark \ref{RemarkSp}, there exists $\omega^{*}\in H^{1}(\mathbb{R}^{2}, \mathbb{R})\setminus\{0\}$ such that
\begin{equation*}
S_{p}^{V^*}=\frac{\Vert \nabla \omega^{*}\Vert_2^{2}+V^*\Vert  \omega^{*}\Vert_2^{2}}{\Vert \omega^{*}\Vert_{p}^{2}}.
\end{equation*}
Thus, using \eqref{cmu}, (\ref{f4}), and Lemma \ref{Splemma}, we have
\begin{align*}
0<c_{V^*}
&\leq \max_{t\geq 0}I_{V^*}(t\omega^{*})
\leq \max_{t\geq 0}\Big\{\frac{t^{2}}{2}(\Vert \nabla \omega^{*}\Vert_2^{2}+V^*\Vert  \omega^{*}\Vert_2^{2})
-\frac{C_{p}t^{p}}{p} \|\omega^{*}\|_p^{p}\Big\}\\
&\qquad
=\frac{p-2}{2p}\left[\frac{(S_{p}^{V^*})^{p}}{C_{p}^{2}}\right]^\frac{1}{p-2}
=\frac{p-2}{2p}\left[\frac{({V^*})^{2}S_{p}^p}{C_{p}^{2}}\right]^\frac{1}{p-2}
<\frac{\theta-2}{16 \theta}.
\end{align*}
\end{proof}

Finally we give the following result.

\begin{lemma}\label{lemFat}
Let $\mu\in (0, V^*]$, and $\{\omega_{n}\}\subset \mathcal{N}_{\mu}$ be a sequence of positive functions satisfying $I_{\mu}(\omega_{n})\rightarrow c_{\mu}$. Then $\{\omega_{n}\}$ is bounded
in  $H^{1}(\mathbb{R}^{2}, \mathbb{R})$ and, up to a subsequence, $\omega_{n}\rightharpoonup \omega$ in $H^{1}(\mathbb{R}^{2}, \mathbb{R})$.\\
Moreover we have that:
\begin{enumerate}
	\item if $\omega\neq 0$, then $\omega_{n}\rightarrow \omega\in \mathcal{N}_{\mu}$ in $H^{1}(\mathbb{R}^{2}, \mathbb{R})$ and $\omega$ is a ground state for \eqref{3.2};
	\item if $\omega= 0$, then there exists $\{\tilde{y}_{n}\}\subset \mathbb{R}^{2}$ with $\vert \tilde{y}_{n}\vert\rightarrow +\infty$ and $\tilde{\omega}\in \mathcal{N}_{\mu}$ such that, up to a subsequence, $\omega_{n}(\cdot+\tilde{y}_{n})\rightarrow \tilde{\omega}$ in $H^{1}(\mathbb{R}^{2}, \mathbb{R})$
	and $\tilde{\omega}$ is a ground state for \eqref{3.2}.
\end{enumerate}
\end{lemma}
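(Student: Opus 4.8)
The plan is to combine an Ekeland/Palais--Smale reduction with a concentration--compactness alternative, the decisive point being that the energy level $c_\mu$ confines the sequence strictly \emph{below the Trudinger--Moser threshold}. First I would establish the boundedness. Since $\omega_n\in\mathcal{N}_\mu$ we have $I'_\mu(\omega_n)[\omega_n]=0$, so that, using \eqref{f3},
\[
c_\mu+o(1)=I_\mu(\omega_n)=I_\mu(\omega_n)-\tfrac12 I'_\mu(\omega_n)[\omega_n]
=\tfrac12\int_{\mathbb{R}^2}\big(f(\omega_n^2)\omega_n^2-F(\omega_n^2)\big)\,dx
\geq\Big(\tfrac12-\tfrac1\theta\Big)\|\omega_n\|_\mu^2,
\]
whence $\{\omega_n\}$ is bounded and, up to a subsequence, $\omega_n\rightharpoonup\omega$ in $H^1(\mathbb{R}^2,\mathbb{R})$. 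Moreover, since $c_\mu\leq c_{V^*}$ by \eqref{moncmu} and $c_{V^*}<\frac{\theta-2}{16\theta}$ by Lemma \ref{le32}, the same chain yields $\limsup_n\|\omega_n\|_\mu^2<\frac18$, hence $\limsup_n\|\nabla\omega_n\|_2^2<\frac18<\frac14$. This sub-threshold bound is exactly what makes all the exponential estimates uniform, as in Lemma \ref{2.0}.

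Next I would reduce to a Palais--Smale sequence: applying Ekeland's variational principle to $\min_{\mathcal{N}_\mu}I_\mu$ and using that \eqref{f4} makes $\mathcal{N}_\mu$ a non-degenerate natural constraint (so the Lagrange multipliers vanish in the limit), I find a sequence at distance $o(1)$ from $\{\omega_n\}$ satisfying $I'_\mu(\cdot)\to 0$ in $(H^1)'$; as this affects neither the weak limit nor the strong-convergence conclusion, I may assume $\{\omega_n\}$ itself is Palais--Smale. I then exclude vanishing: if $\sup_{y}\int_{B_1(y)}\omega_n^2\,dx\to 0$, Lions' lemma gives $\omega_n\to 0$ in $L^s(\mathbb{R}^2)$ for all $s>2$, and combining \eqref{2.10}, Hölder's inequality and the uniform bound of Lemma \ref{le24} (legitimate by the sub-threshold estimate) yields $\|\omega_n\|_\mu^2=\int_{\mathbb{R}^2}f(\omega_n^2)\omega_n^2\,dx\to 0$, contradicting $\|\omega_n\|_\mu\geq K$. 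Hence there are $\delta>0$ and $y_n$ with $\liminf_n\int_{B_1(y_n)}\omega_n^2\,dx\geq\delta$. If $\omega=0$, the compact embedding $H^1(\mathbb{R}^2,\mathbb{R})\hookrightarrow L^2_{\rm loc}(\mathbb{R}^2,\mathbb{R})$ forces $|y_n|\to+\infty$ (otherwise the mass $\delta$ would survive in a fixed disk); setting $\tilde y_n:=y_n$, $\tilde\omega_n:=\omega_n(\cdot+\tilde y_n)$, translation invariance of the autonomous problem \eqref{3.2} keeps $\tilde\omega_n\in\mathcal{N}_\mu$ Palais--Smale with $I_\mu(\tilde\omega_n)\to c_\mu$, and $\tilde\omega_n\rightharpoonup\tilde\omega$ with $\int_{B_1(0)}\tilde\omega^2\,dx\geq\delta$, so $\tilde\omega\neq 0$. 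Thus both alternatives reduce to a (possibly translated) Palais--Smale minimizing sequence with \emph{non-zero} weak limit.

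It then suffices to treat that situation, say $\omega_n\rightharpoonup\omega\neq 0$. Passing to the limit in $I'_\mu(\omega_n)[\phi]=o(1)$ for $\phi\in C_c^\infty(\mathbb{R}^2,\mathbb{R})$ --- the linear part by weak convergence and the nonlinear part by the compact embedding together with the uniform Trudinger--Moser control of $f(\omega_n^2)\omega_n$ on the support of $\phi$ --- shows $I'_\mu(\omega)=0$; since $\omega\neq 0$ (and $\omega\geq 0$, hence a positive ground state by the strong maximum principle), $\omega\in\mathcal{N}_\mu$ and $I_\mu(\omega)\geq c_\mu$. On the other hand, $I_\mu(u)=\frac12\int_{\mathbb{R}^2}(f(u^2)u^2-F(u^2))\,dx$ on $\mathcal{N}_\mu$ with non-negative integrand by \eqref{f3}, so Fatou's lemma gives $I_\mu(\omega)\leq\liminf_n I_\mu(\omega_n)=c_\mu$; hence $I_\mu(\omega)=c_\mu$ and $\omega$ is a ground state. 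For strong convergence I set $v_n:=\omega_n-\omega\rightharpoonup 0$ and invoke the Brezis--Lieb splittings obtained exactly as \eqref{apprF}--\eqref{apprf} (valid since $\limsup_n\|\nabla\omega_n\|_2^2<\frac18$) to get $I_\mu(v_n)\to 0$ and $I'_\mu(v_n)[v_n]\to 0$; then $\frac12\int_{\mathbb{R}^2}(f(v_n^2)v_n^2-F(v_n^2))\,dx=I_\mu(v_n)-\frac12 I'_\mu(v_n)[v_n]\to 0$, which by \eqref{f3} forces $\int_{\mathbb{R}^2}f(v_n^2)v_n^2\,dx\to 0$ and hence $\|v_n\|_\mu^2=I'_\mu(v_n)[v_n]+\int_{\mathbb{R}^2}f(v_n^2)v_n^2\,dx\to 0$. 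This gives $\omega_n\to\omega$ (resp.\ $\tilde\omega_n\to\tilde\omega$) strongly in $H^1(\mathbb{R}^2,\mathbb{R})$, covering both cases of the statement.

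The main obstacle is the exponential critical growth of $f$: lacking compactness, one cannot pass to the limit in the nonlinear terms nor split the energy by elementary means. Everything rests on the uniform Trudinger--Moser estimates, which demand that the gradients stay strictly below the critical threshold; this is precisely guaranteed by the bound $c_{V^*}<\frac{\theta-2}{16\theta}$ of Lemma \ref{le32}, giving $\limsup_n\|\nabla\omega_n\|_2^2<\frac18$ and thereby licensing the Brezis--Lieb decompositions \eqref{apprF}--\eqref{apprf}. The two secondary delicate points --- the reduction to a Palais--Smale sequence and the exclusion of loss of mass at infinity --- are handled respectively by Ekeland's principle with \eqref{f4} and by the translation argument that produces the escaping sequence $\{\tilde y_n\}$ in the case $\omega=0$.
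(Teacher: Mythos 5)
Your proof is correct and its skeleton coincides with the paper's: the sub-threshold bound $\limsup_n\big(\|\nabla\omega_n\|_2^2+\mu\|\omega_n\|_2^2\big)<\tfrac18$ deduced from (\ref{f3}) and Lemma \ref{le32}, exclusion of vanishing via Lions' lemma plus Trudinger--Moser-controlled estimates, the translation argument when $\omega=0$, and Fatou's lemma (applied to the nonnegative integrand supplied by (\ref{f3})) to show the weak limit attains $c_\mu$. Two sub-steps genuinely differ. First, you invoke Ekeland's variational principle together with the natural-constraint property coming from (\ref{f4}) to replace $\{\omega_n\}$ by a Palais--Smale sequence before passing to the limit; the paper never does this and simply asserts that the weak limit is a critical point after proving convergence of the nonlinear term, a conclusion that in fact tacitly requires $I'_\mu(\omega_n)\to 0$ (the analogous multiplier argument is only carried out for the $\varepsilon$-problem in Proposition \ref{gt621}). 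Your explicitness here is a genuine improvement in rigor. Second, for strong convergence you run a Brezis--Lieb splitting in the spirit of \eqref{apprF}--\eqref{apprf} and of Proposition \ref{gt688}, whereas the paper avoids all splitting machinery with a shorter sandwich: weak lower semicontinuity gives $0\le\liminf_n\big(\tfrac12-\tfrac1\theta\big)\big(\|\omega_n\|_\mu^2-\|\omega\|_\mu^2\big)$, while Fatou applied to $\tfrac1\theta f(t)t-\tfrac12F(t)\ge 0$ gives $\limsup_n\big(\tfrac12-\tfrac1\theta\big)\big(\|\omega_n\|_\mu^2-\|\omega\|_\mu^2\big)\le 0$, whence norm convergence and, in a Hilbert space, strong convergence. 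Your route works but is heavier, and be aware that the splittings of Lemma \ref{2.0} require not only the gradient bound you cite but also the a.e.\ convergence $\nabla\omega_n\to\nabla\omega$ (available for your Palais--Smale sequence by the argument of Lemma \ref{gt6888}), and they decompose with respect to the truncations $\hat{u}_j$ rather than $\omega$ itself, so one must also pass through Lemma \ref{gt521}; these hypotheses should be verified rather than asserted.
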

\begin{proof}
By (\ref{f3}) and Lemma \ref{le32}, it follows that
 \begin{align*}
\frac{\theta-2}{2\theta} \limsup_n (\|\nabla \omega_n\|_{2}^2 + \mu \| \omega_n\|_{2}^2)
&\leq
\limsup_n\Big\{\Big(\frac{1}{2}-\frac{1}{\theta}\Big) (\|\nabla \omega_n\|_{2}^2 + \mu \| \omega_n\|_{2}^2)\\
&\qquad\qquad\qquad
+\int_{\mathbb{R}^{2}}\Big(\frac{1}{\theta}f( \omega_{n}^{2}) \omega_{n}^{2}-\frac{1}{2}F( \omega_{n}^{2})\Big)dx\Big\}\\
&=
\limsup_n \Big\{ I_{\mu}(\omega_{n})-\frac{1}{\theta}I'_{\mu}(\omega_{n})[\omega_{n}]\Big\}
=
c_{\mu}\leq c_{V^*}<\frac{\theta-2}{16\theta}.
\end{align*}
Thus,
\begin{equation}
\label{limsup<1/8}
\ell^2:=\limsup_n (\|\nabla \omega_n\|_{2}^2 + \mu \| \omega_n\|_{2}^2)<\frac{1}{8}
\end{equation}
and so there exists $\omega\in H^{1}(\mathbb{R}^{2}, \mathbb{R})$ such that, up to a subsequence, $\omega_{n}\rightharpoonup \omega$ in $H^{1}(\mathbb{R}^{2}, \mathbb{R})$. Moreover $\omega_{n}\rightarrow \omega$ in $L^{r}_{\rm loc}(\mathbb{R}^{2}, \mathbb{R})$, for any $r\geq 1$ and  $\omega_{n}\rightarrow \omega$ a.e. in $x\in \mathbb{R}^{2}$.\\
Now let us consider two different cases.\\
{\bf Case 1:} $\omega\neq 0$.\\
Observe that, for every $\phi\in C_{c}^{\infty}(\mathbb{R}^{2},\mathbb{R})$,
$$f(\omega_{n}^{2})\omega_{n}\phi\to f(\omega^{2})\omega\phi
\text{ a.e. in } \mathbb{R}^2 \text{ as }n\to+\infty$$
and that, by \eqref{2.8}, we have that for any $\zeta>0$, $q>2$, and $\alpha>4\pi$, there exists $C>0$ such that
\begin{align}\label{inequ}
	|f(\omega_{n}^{2})\omega_{n}\phi |
&\leq \zeta  \omega_{n}\vert \phi\vert
+C \omega_{n}^{q-1}(e^{\alpha \omega_{n}^{2}}-1)\vert \phi\vert
\leq
	\zeta  \omega_{n} \vert \phi\vert+C(\omega_{n}^{q}+\vert \phi\vert^{q})(e^{\alpha \omega_{n}^{2}}-1),
	\end{align}
with
\[
\zeta  \omega_{n} \vert \phi\vert+C(\omega_{n}^{q}+\vert \phi\vert^{q})(e^{\alpha \omega_{n}^{2}}-1)
\to
\zeta  \omega \vert \phi\vert+C(\omega^{q}+\vert \phi\vert^{q})(e^{\alpha \omega^{2}}-1)
\text{ a.e. in } \mathbb{R}^2 \text{ as }n\to+\infty.
\]
 Now, taking into account \eqref{limsup<1/8},  let us fix $\kappa\in(0,1-8\ell^2)$, and let
	\[
p^*\in\Big(1,\frac{1}{8\ell^2+\kappa}\Big),
\quad
	\alpha\in\Big(4\pi,\frac{4\pi}{p^*(8\ell^2+\kappa)}\Big).
	\]
If $\Omega:=\text{supp}(\phi)$, by \cite[Lemma A.1]{rW},
	there exists $\upsilon  \in L^{qq^*}(\Omega,\mathbb{R})$ such that $\omega_{n}\leq \upsilon$ a.e. in $\Omega$, where $qq^*>2$ and $q^*$ is the conjugate exponent of $p^*$ and, by \eqref{inequ}, we have
\begin{align*}
	|f(\omega_{n}^{2})\omega_{n}\phi |
	\leq
	\zeta \omega_{n} \vert \phi\vert+C (\upsilon^{q}+\vert \phi\vert^{q})(e^{\alpha \omega_{n}^{2}}-1)
	\to \zeta \omega \vert \phi\vert+C (\upsilon^{q}+\vert \phi\vert^{q})(e^{\alpha \omega^{2}}-1)
	\text{ a.e. in } \Omega.
	\end{align*}
Arguing as in the proof of Lemma \ref{2.0}, we have
		$$(e^{\alpha \omega_{n}^{2}}-1)\rightharpoonup (e^{\alpha \omega^{2}}-1) \text{ in } L^{p^*}(\mathbb{R}^{2}, \mathbb{R}).$$
		Moreover, since $ \omega_{n} \rightharpoonup  \omega$ in $L^{2}(\mathbb{R}^{2}, \mathbb{R})$
		we have that
		\[
		\zeta\int_{\mathbb{R}^{2}} \omega_{n} \vert \phi\vert dx
		+C\int_{\mathbb{R}^{2}}(\upsilon^{q}+\vert \phi\vert^{q})(e^{\alpha \omega_{n}^{2}}-1) dx
		\to
		\zeta\int_{\mathbb{R}^{2}} \omega\vert \phi\vert dx
		+C \int_{\mathbb{R}^{2}}(\upsilon^{q}+\vert \phi\vert^{q})(e^{\alpha \omega^{2}}-1)dx
		\]
		as $n\to +\infty$.\\
		Hence, a variant of the Lebesgue Dominated Convergence Theorem
		implies that
		$$
		\int_{\mathbb{R}^{2}}f(\omega_{n}^{2})\omega_{n}\phi dx\rightarrow \int_{\mathbb{R}^{2}}f(\omega^{2})\omega\phi dx,
		$$
		and so $\omega$ is a nontrivial critical point for $I_{\mu}$.\\
Since,  by (\ref{f3}) and the Fatou's Lemma,
$$
\int_{\mathbb{R}^{2}}\Big(\frac{1}{\theta}f(\omega^{2})\omega^{2}-\frac{1}{2}F( \omega^{2})\Big)dx
\leq
\liminf_n\int_{\mathbb{R}^{2}}\Big(\frac{1}{\theta}f(\omega_{n}^{2}) \omega_{n}^{2}-\frac{1}{2}F(\omega_{n}^{2})\Big)dx,$$
we have
 \begin{align*}
c_{\mu}
&\leq
I_{\mu}(\omega)
=I_{\mu}(\omega)-\frac{1}{\theta}I'_{\mu}(\omega)[\omega]\\
&=
\Big(\frac{1}{2}-\frac{1}{\theta}\Big) (\|\nabla \omega\|_{2}^2 + \mu \| \omega\|_{2}^2)
+\int_{\mathbb{R}^{2}}\Big(\frac{1}{\theta}f( \omega^{2})\omega^{2}-\frac{1}{2}F( \omega^{2})\Big)dx\\
&\leq
\liminf_n\Big\{\Big(\frac{1}{2}-\frac{1}{\theta}\Big) (\|\nabla \omega_n\|_{2}^2 + \mu \| \omega_n\|_{2}^2)
+\int_{\mathbb{R}^{2}}\Big(\frac{1}{\theta}f( \omega_{n}^{2}) \omega_{n}^{2}-\frac{1}{2}F( \omega_{n}^{2})\Big)dx\Big\}\\
&=
\liminf_n \Big\{ I_{\mu}(\omega_{n})-\frac{1}{\theta}I'_{\mu}(\omega_{n})[\omega_{n}]\Big\}
=c_{\mu}
\end{align*}
and, by the assumptions,
 \begin{align*}
0
&\leq
\liminf_n \left[\Big(\frac{1}{2}-\frac{1}{\theta}\Big)
(\|\nabla \omega_n\|_{2}^2
+ \mu \| \omega_n\|_{2}^2
-\|\nabla \omega\|_{2}^2
-\mu \| \omega\|_{2}^2)\right]\\
&
\leq
\limsup_n \left[\Big(\frac{1}{2}-\frac{1}{\theta}\Big)
(\|\nabla \omega_n\|_{2}^2
+ \mu \| \omega_n\|_{2}^2
-\|\nabla \omega\|_{2}^2
-\mu \| \omega\|_{2}^2)\right]
\\
&=
\limsup_n \Big[
I_{\mu}(\omega_n) - c_{\mu}
+\int_{\mathbb{R}^{2}}\Big(\frac{1}{\theta}f(\omega^{2})\omega^{2}-\frac{1}{2}F( \omega^{2})\Big)dx
-\int_{\mathbb{R}^{2}}\Big(\frac{1}{\theta}f(\omega_{n}^{2}) \omega_{n}^{2}-\frac{1}{2}F( \omega_{n}^{2})\Big)dx
\Big]
\\
&=
\int_{\mathbb{R}^{2}}\Big(\frac{1}{\theta}f(\omega^{2}) \omega^{2}-\frac{1}{2}F( \omega^{2})\Big)dx
-\liminf_n \Big[\int_{\mathbb{R}^{2}}\Big(\frac{1}{\theta}f(\omega_{n}^{2}) \omega_{n}^{2}-\frac{1}{2}F( \omega_{n}^{2})\Big)dx\Big]
\leq 0.
\end{align*}
{\bf Case 2:} $\omega= 0$.\\
In this case, there exist $R, \eta>0$, and $(\tilde{y}_{n})\subset \mathbb{R}^{2}$ such that
\begin{equation}\label{4.7}
\lim_n\int_{B_{R}(\tilde{y}_{n})}\omega_{n}^{2}dx\geq \eta.
\end{equation}
Indeed, if this does not hold, for any $R>0$, one has
\begin{equation*}
\lim_n \sup_{y\in \mathbb{R}^{2}}\int_{B_{R}(y)} \omega_{n}^{2}dx=0.
\end{equation*}
Thus, by \cite[Chapter 6, Lemma 8.4]{rK}, for every $\tau>2$,
\begin{equation*}
\lim_n \Vert \omega_{n}\Vert_{\tau}=0.
\end{equation*}
Moreover, by \eqref{2.10}, \eqref{ineqe}, Lemma \ref{le24}, and \eqref{limsup<1/8}, and arguing again as in Lemma \ref{2.0},
for $0<\zeta<\mu/2$, $q>2$ and $\alpha>4\pi$, there exists $C>0$ such that
\begin{align*}
\| \nabla \omega_{n}\|_2^{2}+\mu \|\omega_{n}\|_2^{2}
&\leq
\zeta \|\omega_{n}\|_2^{2}
+C\int_{\mathbb{R}^{2}} \omega_{n}^{q}(e^{\alpha   \omega_{n}^{2}}-1)dx\\
&\leq
\zeta \|\omega_{n}\|_2^{2}
+ C \Vert \omega_{n}\Vert_{qq^{*}}^{q}\Big(\int_{\mathbb{R}^{2}}(e^{p^{*}\alpha \omega_{n}^{2}}-1)dx\Big)^{1/p^{*}}\\
&\leq
\zeta \|\omega_{n}\|_2^{2}
+ C \Vert \omega_{n}\Vert_{qq^{*}}^{q}
=\zeta \|\omega_{n}\|_2^{2}
+ o_{n}(1).
\end{align*}
Hence $\omega_{n} \rightarrow 0 $ in $H^1(\mathbb{R}^2,\mathbb{R})$ as $n\rightarrow +\infty$, and so $I_{\mu}(\omega_{n})\rightarrow 0$ as $n\rightarrow +\infty$. But this is in contradiction with $I_{\mu}(\omega_{n})\rightarrow c_{\mu}>0$ as $n\rightarrow +\infty$.\\
Now we claim that $\vert \tilde{y}_{n}\vert\rightarrow+\infty$.\\
Otherwise, there exists $\bar{R}>0$ such that
\begin{equation*}
\lim_n\int_{B_{\bar{R}}(0)}\omega_{n}^{2}dx\geq \eta
\end{equation*}
and so $\omega\neq 0$.\\
Moreover $\omega_{n}(\cdot+\tilde{y}_{n})\in \mathcal{N}_{\mu}$ and, the invariance of $I_{\mu}$ and $\Vert \nabla \cdot \Vert_2^2 + \mu \|\cdot\|_2^2$ by translations implies
\[
I_{\mu}(\omega_{n}(\cdot+\tilde{y}_{n}))\rightarrow c_{\mu}
\text{ and }
\limsup_n (\|\nabla \omega_n (\cdot+\tilde{y}_{n})\|_{2}^2 + \mu \| \omega_n (\cdot+\tilde{y}_{n})\|_{2}^2) <\frac{1}{8}.
\]
Thus, arguing as before, there exists $\tilde{\omega}\in H^{1}(\mathbb{R}^{2}, \mathbb{R})$ which, by \eqref{4.7}, is nontrivial, such that
\begin{align*}
 \omega_{n}(\cdot+\tilde{y}_{n})\rightharpoonup \tilde{\omega}\quad \text{in}\quad H^{1}(\mathbb{R}^{2}, \mathbb{R}).
\end{align*}
Hence, repeating the same arguments of Case 1, we conclude.
\end{proof}

\section{Proof of Theorem \ref{gt62} }\label{Sec3}
The goal of this section is to prove Theorem \ref{gt62}.

We say that $u\in H_{\varepsilon}$ is a weak solution of \eqref{1.6}, if for any $\phi\in H_{\varepsilon}$
\begin{equation*}
\text{Re}\int_{\mathbb{R}^{2}}(\nabla_{A_{\varepsilon}}u
\overline{\nabla_{A_{\varepsilon}}\phi}+V_{\varepsilon}(x)u\overline{\phi})dx=\text{Re}\int_{\mathbb{R}^{2}}f(\vert u\vert^{2})u\overline{\phi}dx.
\end{equation*}
Such solutions can be found as the critical points of the $C^1$ functional $J_{\varepsilon}: H_{\varepsilon}\rightarrow \mathbb{R}$ given by
\begin{equation*}
J_{\varepsilon}(u)=\frac{1}{2}\int_{\mathbb{R}^{2}}(\vert \nabla_{A_\varepsilon}u\vert^{2}+V_{\varepsilon}(x)\vert u\vert^{2})dx-\frac{1}{2}\int_{\mathbb{R}^{2}}F(\vert
u\vert^{2})dx.
\end{equation*}

The functional $J_{\varepsilon}$ satisfies the Mountain Pass geometry, namely we have
\begin{lemma}\label{mountain}
For any $\varepsilon>0$,
\begin{enumerate}[label=(\roman{*}), ref=\roman{*}]
	\item \label{MPGi} there exist $\beta, r>0$ such that $J_{\varepsilon}(u)\geq \beta$ if $\Vert u\Vert_{\varepsilon}=r$;
	\item \label{MPGii} there exists $e\in H_{\varepsilon}$ with $\Vert e\Vert_{\varepsilon}>r$ such that $J_{\varepsilon}(e)<0$.
\end{enumerate}
\end{lemma}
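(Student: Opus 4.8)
The plan is to verify the two items separately, after reducing every nonlinear integral to the real Sobolev space $H^{1}(\mathbb{R}^{2},\mathbb{R})$. Since $J_\varepsilon(u)=\tfrac12\Vert u\Vert_\varepsilon^2-\tfrac12\int_{\mathbb{R}^2}F(\vert u\vert^2)\,dx$, the geometry depends only on $\Vert u\Vert_\varepsilon$ and on the term $\int_{\mathbb{R}^2}F(\vert u\vert^2)\,dx$; and by the diamagnetic inequality \eqref{2.1} together with \eqref{emb}, the modulus $v:=\vert u\vert$ belongs to $H^{1}(\mathbb{R}^{2},\mathbb{R})$ and satisfies $\Vert v\Vert\le C\Vert u\Vert_\varepsilon$. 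Thus all the needed bounds can be obtained from the real-valued estimates \eqref{2.11}, \eqref{ineqe}, and the Trudinger-Moser inequality of Lemma \ref{le24}.

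To prove \eqref{MPGi} I would fix $q>2$ and start from \eqref{2.11}, which gives, for any $\zeta>0$ and $\alpha>4\pi$, the bound $F(\vert u\vert^2)\le \zeta\vert u\vert^2+C\vert u\vert^q(e^{\alpha\vert u\vert^2}-1)$. The linear term contributes at most $\zeta C\Vert u\Vert_\varepsilon^2$ via the embedding of Lemma \ref{lem21}. For the exponential term I would use H\"older's inequality with conjugate exponents $p^*,q^*$, writing $\int_{\mathbb{R}^2}\vert u\vert^q(e^{\alpha\vert u\vert^2}-1)\,dx\le \Vert v\Vert_{qq^*}^q\big(\int_{\mathbb{R}^2}(e^{\alpha\vert u\vert^2}-1)^{p^*}\,dx\big)^{1/p^*}$, then bound $(e^{\alpha\vert u\vert^2}-1)^{p^*}\le e^{p^*\alpha\vert u\vert^2}-1$ by \eqref{ineqe}, so that everything reduces to estimating $\int_{\mathbb{R}^2}(e^{p^*\alpha\vert u\vert^2}-1)\,dx$ on the sphere $\Vert u\Vert_\varepsilon=r$.

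I expect this last estimate to be the only genuine obstacle, since $\alpha>4\pi$ and $p^*>1$ force the effective exponent $p^*\alpha$ above $4\pi$, so Lemma \ref{le24} cannot be used directly. The device I would adopt is to normalize by the \emph{full} norm of $v$: with $\tilde v:=v/\Vert v\Vert$ one has $\Vert\nabla\tilde v\Vert_2\le 1$ and $\Vert\tilde v\Vert_2\le 1$, and $p^*\alpha\vert u\vert^2=\big(p^*\alpha\Vert v\Vert^2\big)\tilde v^2$. Because $\Vert v\Vert^2\le C^2\Vert u\Vert_\varepsilon^2=C^2r^2$, choosing $r$ so small that $p^*\alpha C^2r^2<4\pi$ keeps the effective exponent strictly below the critical threshold, and the second part of Lemma \ref{le24} (with $M=1$) yields $\int_{\mathbb{R}^2}(e^{p^*\alpha\vert u\vert^2}-1)\,dx\le C$ uniformly on $\Vert u\Vert_\varepsilon=r$. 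Collecting the estimates gives $J_\varepsilon(u)\ge \tfrac12 r^2(1-\zeta C)-Cr^q$; fixing $\zeta$ with $\zeta C<\tfrac12$ and then shrinking $r$ (recall $q>2$) makes the right-hand side a positive constant $\beta$, which proves \eqref{MPGi}.

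Finally, for \eqref{MPGii} I would exploit the superquadratic condition (\ref{f3}): integrating $\tfrac{\theta}{2}F(t)\le tF'(t)$ from a fixed $t_0>0$ gives $F(t)\ge C_0 t^{\theta/2}$ for $t\ge t_0$, with $C_0>0$ and $\theta>2$. Fixing any $u_0\in H_\varepsilon\setminus\{0\}$, for instance one furnished by Lemma \ref{gt520}, so that $\vert u_0\vert\in L^\theta(\mathbb{R}^2,\mathbb{R})$ with $\Vert\vert u_0\vert\Vert_\theta>0$, I would evaluate $J_\varepsilon(tu_0)=\tfrac{t^2}{2}\Vert u_0\Vert_\varepsilon^2-\tfrac12\int_{\mathbb{R}^2}F(t^2\vert u_0\vert^2)\,dx$. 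On the set $\{t^2\vert u_0\vert^2\ge t_0\}$, which increases to $\{\vert u_0\vert>0\}$ as $t\to+\infty$, the lower bound on $F$ gives $\int_{\mathbb{R}^2}F(t^2\vert u_0\vert^2)\,dx\ge C_0 t^\theta\int_{\{t^2\vert u_0\vert^2\ge t_0\}}\vert u_0\vert^\theta\,dx\ge C_1 t^\theta$ for $t$ large. Hence $J_\varepsilon(tu_0)\le \tfrac{t^2}{2}\Vert u_0\Vert_\varepsilon^2-\tfrac{C_1}{2}t^\theta\to-\infty$ as $t\to+\infty$, and taking $e:=tu_0$ with $t$ so large that $\Vert e\Vert_\varepsilon>r$ and $J_\varepsilon(e)<0$ concludes.
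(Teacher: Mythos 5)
Your proof is correct. For part (\ref{MPGi}) your argument is essentially the paper's: both split $F$ via \eqref{2.11}, treat the exponential term with the H\"older inequality and \eqref{ineqe}, and then exploit the smallness of $r$ by rescaling the modulus so that the effective exponent falls strictly below $4\pi$, after which Lemma \ref{le24} gives a bound that is uniform on the sphere $\Vert u\Vert_{\varepsilon}=r$ (the paper normalizes $\vert u\vert$ by $\Vert u\Vert_{\varepsilon}$, using the diamagnetic inequality \eqref{2.1} and \eqref{V} to get $\Vert \nabla(\vert u\vert/\Vert u\Vert_\varepsilon)\Vert_2\leq 1$ and the $L^2$-bound with $M=1/\sqrt{V_0}$, while you normalize by $\Vert\,\vert u\vert\,\Vert$ with $M=1$; this difference is immaterial). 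For part (\ref{MPGii}) you take a genuinely different route: you integrate the Ambrosetti--Rabinowitz condition (\ref{f3}) to obtain $F(t)\geq C_{0}t^{\theta/2}$ for $t\geq t_{0}$ and then pass to the limit on the superlevel sets $\{t^{2}\vert u_{0}\vert^{2}\geq t_{0}\}$, whereas the paper simply observes that (\ref{f4}) yields the global pointwise bound $F(t)\geq \frac{2C_{p}}{p}\,t^{p/2}$ for all $t\geq 0$, so that $J_{\varepsilon}(t\varphi)\leq \frac{t^{2}}{2}\Vert\varphi\Vert_{\varepsilon}^{2}-\frac{C_{p}}{p}t^{p}\Vert\varphi\Vert_{p}^{p}\to-\infty$ for any fixed $\varphi\in C_{c}^{\infty}(\mathbb{R}^{2},\mathbb{C})\setminus\{0\}$. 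Your version buys robustness, since it uses only (\ref{f3}) and would survive any weakening of (\ref{f4}), at the price of the extra limiting argument on the superlevel sets; the paper's version is a one-line computation once (\ref{f4}) is available.
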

\begin{proof}
By \eqref{2.11}, the H\"{o}lder and Sobolev inequalities, and \eqref{ineqe}, fixed $q>2$, for any $\zeta>0$ and $\alpha>4\pi$, there exists a constant $C>0$, which depends on $q$, $\alpha$, $\zeta$, such that
\begin{equation}
\label{3.1}
\begin{split}
\int_{\mathbb{R}^2}F(\vert u\vert^{2}) dx
&\leq
\zeta \| u\|_2^{2}
+C \int_{\mathbb{R}^{2}}\vert u\vert^{q}(e^{\alpha \vert u\vert^{2}}-1)dx\\
&\leq
\zeta \| u\|_2^{2}
+C\Vert u\Vert_{2q}^{q}\Big(\int_{\mathbb{R}^{2}}(e^{\alpha \vert u\vert^{2}}-1)^{2}dx\Big)^{1/2}\\
&\leq
\zeta \| u\|_2^{2}
+ C\Vert u\Vert_{\varepsilon}^{q}\Big(\int_{\mathbb{R}^{2}}(e^{2\alpha \vert u\vert^{2}}-1)dx\Big)^{1/2}.
\end{split}
\end{equation}
Moreover, by the diamagnetic inequality \eqref{2.1} and \eqref{V}, for any $u\in H_\varepsilon\setminus\{0\}$,
$$\frac{\vert u\vert}{\Vert u\Vert_{\varepsilon}}\in H^{1}(\mathbb{R}^{2}, \mathbb{R}),
\quad
\Big\Vert \frac{\vert u\vert}{\Vert u\Vert_{\varepsilon}}\Big\Vert_2^2 \leq \frac{1}{V_0},
\quad
\Big\Vert \nabla \frac{ \vert u\vert}{\Vert u\Vert_{\varepsilon}}\Big\Vert_2^2\leq 1.$$
Thus, for every $u \in H_\varepsilon$ with $\Vert u\Vert_{\varepsilon}=r>0$ and $r^2< \pi/\alpha$, using Lemma \ref{le24}, we get that there exists $C>0$ independent of $r$ such that
\begin{equation}
\label{3.1ter}
\int_{\mathbb{R}^{2}}(e^{2\alpha \vert u\vert^{2}}-1)dx
=\int_{\mathbb{R}^{2}}(e^{2\alpha r^{2}\big(\frac{\vert u\vert}{\Vert u\Vert_{\varepsilon}}\big)^{2}}-1)dx
<\int_{\mathbb{R}^{2}}(e^{2\pi\big(\frac{\vert u\vert}{\Vert u\Vert_{\varepsilon}}\big)^{2}}-1)dx
\leq C.
\end{equation}
Hence, by \eqref{3.1} and \eqref{3.1ter}, we get that for any $\zeta>0$, there exits $C>0$ independent of $r$ such that
\[
J_{\varepsilon}(u)
\geq
\frac{1}{2} r^2
- \frac{\zeta}{V_0} \int_{\mathbb{R}^{2}} V_\varepsilon(x) | u|^{2} dx
- Cr^{q}
\geq\frac{1}{2}\Big(1-\frac{\zeta}{V_{0}}\Big)r^{2}-C r^{q}
\]
and so, since $q>2$, for $r$ small enough we obtain (\ref{MPGi}).\\
Finally, property (\ref{MPGii}) is an easy consequence of (\ref{f4}), since, fixed $\varphi\in C_c^\infty(\mathbb{R}^2,\mathbb{C})\setminus\{0\}$, we have that
\[
J_{\varepsilon}(t\varphi)\leq\frac{t^{2}}{2} \Vert \varphi \Vert^{2}_{\varepsilon}- \frac{C_{p}}{p}t^{p} \| \varphi\|_p^{p}.
\]
\end{proof}

Let now
\begin{equation}\label{ceps}
c_{\varepsilon}:=\inf_{\gamma\in \Gamma_{\varepsilon}}\max_{t\in [0, 1]}J_{\varepsilon}(\gamma(t)),
\end{equation}
where
\begin{equation*}
\Gamma_{\varepsilon}=\{\gamma\in C([0, 1], H_{\varepsilon}): \gamma(0)=0 \text{ and } J_{\varepsilon}(\gamma(1))< 0\}.
\end{equation*}
To find ground states and multiple solutions for \eqref{1.1}, as usual, we consider the Nehari manifold
\begin{equation*}
\mathcal{N}_{\varepsilon}:=\{u\in H_{\varepsilon}\backslash\{0\}: J'_{\varepsilon}(u)[u]=0\}.
\end{equation*}
In virtue of (\ref{f4}), we can show that for any $u\in H_\varepsilon\setminus\{0\}$ there exists a unique $t_\varepsilon>0$ such that $t_{\varepsilon}u\in \mathcal{N}_{\varepsilon}$ and
\begin{equation}
\label{Nehepsmax}
J_{\varepsilon}(t_{\varepsilon}u)=\max_{t\geq 0}J_{\varepsilon}(tu)
\end{equation}
and, using classical arguments (see e.g. \cite[Lemma 4.1 and Theorem 4.2]{rW}),
\begin{equation}\label{minmaxNehari}
c_{\varepsilon}=\inf_{u\in H_{\varepsilon}\backslash\{0\}}\sup_{t\geq 0}J_{\varepsilon}(tu)=\inf_{u\in \mathcal{N}_{\varepsilon}}J_{\varepsilon}(u).
\end{equation}
Moreover we have the following useful preliminary property.
\begin{lemma}\label{unifbound}
There exist $K_1,K_2>0$ such that for every $\varepsilon>0$ and for every $u\in \mathcal{N}_{\varepsilon}$
\[
\|u\|_\varepsilon\geq K_1
\text{ and }
J_\varepsilon(u)\geq K_2.
\]
\end{lemma}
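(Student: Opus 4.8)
The plan is to exploit the Nehari identity $\|u\|_\varepsilon^2 = \int_{\mathbb{R}^2} f(|u|^2)|u|^2\,dx$, valid for every $u \in \mathcal{N}_\varepsilon$, and to bound its right-hand side by the exponential estimate \eqref{2.10} together with the Trudinger--Moser inequality of Lemma \ref{le24}, keeping careful track of the fact that all constants produced are independent of $\varepsilon$.

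First I would establish the lower bound on the norm. Fix $\zeta \in (0, V_0)$, $q > 2$, and $\alpha > 4\pi$, and apply \eqref{2.10} to get
\[
\|u\|_\varepsilon^2 = \int_{\mathbb{R}^2} f(|u|^2)|u|^2\,dx \le \zeta\int_{\mathbb{R}^2}|u|^2\,dx + C\int_{\mathbb{R}^2}|u|^q(e^{\alpha|u|^2}-1)\,dx.
\]
By the diamagnetic inequality \eqref{2.1} and \eqref{V} the first term is absorbed, since $\int_{\mathbb{R}^2}|u|^2\,dx \le V_0^{-1}\|u\|_\varepsilon^2$, producing a positive factor $(1 - \zeta/V_0)$ in front of $\|u\|_\varepsilon^2$. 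For the second term I would use the H\"older inequality, \eqref{ineqe} in the form $(e^{\alpha|u|^2}-1)^2 \le e^{2\alpha|u|^2}-1$, and the embedding constant from \eqref{emb}, which depends only on $V_0$, to write
\[
\int_{\mathbb{R}^2}|u|^q(e^{\alpha|u|^2}-1)\,dx \le \|u\|_{2q}^q\Big(\int_{\mathbb{R}^2}(e^{2\alpha|u|^2}-1)\,dx\Big)^{1/2} \le C\|u\|_\varepsilon^q\Big(\int_{\mathbb{R}^2}(e^{2\alpha|u|^2}-1)\,dx\Big)^{1/2}.
\]
The crucial point is to control the Trudinger--Moser factor uniformly in $\varepsilon$. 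Exactly as in \eqref{3.1ter}, assuming $\|u\|_\varepsilon^2 \le \pi/\alpha$ and normalizing by $v := |u|/\|u\|_\varepsilon$, which satisfies $\|\nabla v\|_2 \le 1$ and $\|v\|_2 \le V_0^{-1/2}$, the pointwise bound $e^{2\alpha|u|^2}-1 \le e^{2\pi v^2}-1$ together with Lemma \ref{le24} (exponent $2\pi < 4\pi$) yields $\int_{\mathbb{R}^2}(e^{2\alpha|u|^2}-1)\,dx \le C(V_0,\alpha)$. Combining the estimates, under the standing assumption $\|u\|_\varepsilon^2 \le \pi/\alpha$ one obtains $(1 - \zeta/V_0)\|u\|_\varepsilon^2 \le C\|u\|_\varepsilon^q$, and since $q > 2$ this forces $\|u\|_\varepsilon \ge K_1'$ for an explicit $K_1' > 0$ independent of $\varepsilon$. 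A dichotomy according to whether $\|u\|_\varepsilon^2 \le \pi/\alpha$ or not then gives $\|u\|_\varepsilon \ge K_1 := \min\{K_1', \sqrt{\pi/\alpha}\}$ for every $u \in \mathcal{N}_\varepsilon$ and every $\varepsilon > 0$.

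The energy bound follows at once. Using the Nehari identity $J'_\varepsilon(u)[u] = 0$ and assumption (\ref{f3}) in the form $\frac12 F(t) \le \frac1\theta t f(t)$, I would write
\[
J_\varepsilon(u) = J_\varepsilon(u) - \frac1\theta J'_\varepsilon(u)[u] = \Big(\frac12 - \frac1\theta\Big)\|u\|_\varepsilon^2 + \int_{\mathbb{R}^2}\Big(\frac1\theta f(|u|^2)|u|^2 - \frac12 F(|u|^2)\Big)\,dx \ge \Big(\frac12 - \frac1\theta\Big)\|u\|_\varepsilon^2,
\]
so that, combined with the previous step, $J_\varepsilon(u) \ge (\frac12 - \frac1\theta)K_1^2 =: K_2 > 0$, again independently of $\varepsilon$ because $\theta > 2$ and $K_1$ does not depend on $\varepsilon$.

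The main obstacle is precisely this uniformity in $\varepsilon$: every constant entering the argument --- the embedding constant in \eqref{emb}, the Sobolev constant for $H^1(\mathbb{R}^2,\mathbb{R}) \hookrightarrow L^{2q}(\mathbb{R}^2,\mathbb{R})$, and above all the Trudinger--Moser constant $C(V_0,\alpha)$ --- must be shown to depend only on $V_0$, $q$, and $\alpha$. This is guaranteed by the diamagnetic inequality, which transfers everything to $|u| \in H^1(\mathbb{R}^2,\mathbb{R})$ with a constant governed solely by $V_0$, and by confining the Trudinger--Moser step to the regime $\|u\|_\varepsilon^2 \le \pi/\alpha$, where the normalized function $v$ has controlled $L^2$- and gradient norms. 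The only slightly delicate bookkeeping is the case distinction that removes this smallness restriction while preserving a strictly positive, $\varepsilon$-free lower bound.
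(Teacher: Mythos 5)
Your proof is correct and follows essentially the same route as the paper's: both start from the Nehari identity, absorb the $\zeta\|u\|_2^2$ term using $V_0$ and \eqref{V}, control $\int_{\mathbb{R}^2}|u|^q(e^{\alpha|u|^2}-1)\,dx$ via H\"older, \eqref{ineqe}, and the uniform Trudinger--Moser bound of Lemma \ref{le24} applied to a normalized function, and conclude from $q>2$; the energy bound via $J_\varepsilon(u)-\tfrac{1}{\theta}J'_\varepsilon(u)[u]$ and (\ref{f3}) is identical. The only difference is organizational: the paper argues by contradiction along a hypothetical sequence $\|u_n\|_{\varepsilon_n}\to 0$ (normalizing by $\sqrt{\|\nabla|u_n|\|_2^2+V_0\|u_n\|_2^2}$ and using that smallness to keep the exponent below $4\pi$), whereas you argue directly in the regime $\|u\|_\varepsilon^2\le\pi/\alpha$ exactly as in \eqref{3.1ter}, which has the mild advantage of producing an explicit $\varepsilon$-independent constant $K_1$.
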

\begin{proof}
Observe that, for every $\varepsilon>0$, if $u\in \mathcal{N}_{\varepsilon}$, by \eqref{2.10}, fixed $q>2$, for any $\zeta>0$ and $\alpha>4\pi$, there exists a constant $C>0$, which depends on $q$, $\alpha$, $\zeta$, such that
\begin{equation}\label{IneqNehf}
\| u\|_\varepsilon^2
= \int_{\mathbb{R}^{2}} f(|u|^2)|u|^2\leq \zeta \|u\|_2^2
+ C \int_{\mathbb{R}^{2}}\vert u\vert^{q}(e^{\alpha \vert u\vert^{2}}-1)dx.
\end{equation}
In particular, for $\zeta<V_0/2$, by \eqref{V}, we have that
\begin{equation}\label{2minusz}
\| u \|_\varepsilon^2-\zeta \|u\|_2^2
\geq
\frac{1}{2} \| u \|_\varepsilon^2
+ \frac{1}{2} \int_{\mathbb{R}^{2}}[ V_\varepsilon(x)-V_0]| u |^{2}dx\\
\geq
\frac{1}{2} \| u \|_\varepsilon^2.
\end{equation}
By the H\"{o}lder inequality and \eqref{ineqe}, we infer that
\begin{equation}\label{bddq}
\int_{\mathbb{R}^{2}} | u |^{q}(e^{\alpha | u |^{2}}-1)dx
\leq
\Vert u\Vert_{2q}^{q}\Big(\int_{\mathbb{R}^{2}}(e^{\alpha \vert u\vert^{2}}-1)^{2}dx\Big)^{1/2}
\leq
C\Vert u\Vert_{\varepsilon}^{q}\Big(\int_{\mathbb{R}^{2}}(e^{2\alpha \vert u\vert^{2}}-1)dx\Big)^{1/2}.
\end{equation}
Now, assume by contradiction that for every $n\in\mathbb{N}$ there exist $\varepsilon_n>0$ and $u_n \in \mathcal{N}_{\varepsilon_n}$ such that $\|u_n\|_{\varepsilon_n} \to 0$ as $n\to +\infty$. Then, using the diamagnetic inequality \eqref{2.1} and \eqref{V}, also $\| \nabla |u_n| \|_2^2 + V_0 \| u_n\|_2^2 \to 0$ as $n\to +\infty$ and so, for $n$ large enough, using Lemma \ref{le24}, for some $\bar{\alpha}\in (0,4\pi)$,
\begin{equation}\label{bddC}
\begin{split}
\int_{\mathbb{R}^{2}}(e^{2\alpha \vert u_n\vert^{2}}-1)dx
&=
\int_{\mathbb{R}^{2}}(e^{2\alpha (\| \nabla |u_n| \|_2^2 + V_0 \| u_n\|_2^2) \left( \frac{|u_n|}{\sqrt{\| \nabla |u_n| \|_2^2 + V_0 \| u_n\|_2^2}}\right)^2} -1)dx\\
&\leq
\int_{\mathbb{R}^{2}}(e^{\bar{\alpha} \left( \frac{|u_n|}{\sqrt{\| \nabla |u_n| \|_2^2 + V_0 \| u_n\|_2^2}}\right)^2} -1)dx
\leq C.
\end{split}
\end{equation}
Thus, by \eqref{IneqNehf}, \eqref{2minusz}, \eqref{bddq}, and \eqref{bddC}, we get
\begin{equation*}
\frac{1}{2}\| u_n \|_{\varepsilon_n}^2
\leq  C \| u_n \|_{\varepsilon_n}^q
\end{equation*}
reach a contradiction.\\
The second part is an immediate consequence of the first one observing that, if $u\in \mathcal{N}_{\varepsilon}$, by (\ref{f3}),
\[
J_\varepsilon(u)
=
J_\varepsilon(u)-\frac{1}{\theta}J_\varepsilon'(u)[u]
=\frac{\theta-2}{2\theta} \| u \|_{\varepsilon}^2
+\frac{1}{\theta} \int_{\mathbb{R}^{2}} \Big[ f(|u|^2)|u|^2-\frac{\theta}{2}F(|u|^2)\Big]dx
\geq
\frac{\theta-2}{2\theta} \| u \|_{\varepsilon}^2.
\]
\end{proof}
The functional $J_\varepsilon$ satisfies the following fundamental property.
\begin{lemma}\label{PSbdd}
If $\{u_{n}\} \subset H_{\varepsilon}$ is a $(PS)$ sequence for $J_{\varepsilon}$, then it is bounded in $H_{\varepsilon}$.
\end{lemma}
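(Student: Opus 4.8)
The plan is to run the classical Ambrosetti--Rabinowitz argument, exploiting assumption (\ref{f3}), which here plays the role of the superquadraticity (monotonicity) condition. It is worth stressing that the exponential critical growth of $f$ does not enter at this stage: no Trudinger--Moser estimate is needed to obtain boundedness, only the pointwise inequality in (\ref{f3}).

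Let $\{u_{n}\}\subset H_{\varepsilon}$ be a $(PS)$ sequence, so that $J_{\varepsilon}(u_{n})$ is bounded, say $|J_{\varepsilon}(u_{n})|\leq C$, and $J'_{\varepsilon}(u_{n})\to 0$ in the dual of $H_{\varepsilon}$, whence $J'_{\varepsilon}(u_{n})[u_{n}]=o(1)\,\Vert u_{n}\Vert_{\varepsilon}$. Recalling that $J'_{\varepsilon}(u_{n})[u_{n}]=\Vert u_{n}\Vert_{\varepsilon}^{2}-\int_{\mathbb{R}^{2}}f(|u_{n}|^{2})|u_{n}|^{2}\,dx$, I would first form the combination
\[
J_{\varepsilon}(u_{n})-\frac{1}{\theta}J'_{\varepsilon}(u_{n})[u_{n}]
=\Big(\frac12-\frac1\theta\Big)\Vert u_{n}\Vert_{\varepsilon}^{2}
+\int_{\mathbb{R}^{2}}\Big[\frac1\theta f(|u_{n}|^{2})|u_{n}|^{2}-\frac12 F(|u_{n}|^{2})\Big]dx.
\]
By (\ref{f3}), applied pointwise with $t=|u_{n}(x)|^{2}$ (and using (\ref{f1}) where $u_{n}$ vanishes), the integrand is nonnegative almost everywhere, so the integral term may be dropped.

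This yields the one-sided estimate
\[
\frac{\theta-2}{2\theta}\Vert u_{n}\Vert_{\varepsilon}^{2}
\leq
J_{\varepsilon}(u_{n})-\frac1\theta J'_{\varepsilon}(u_{n})[u_{n}]
\leq
C+\frac{o(1)}{\theta}\Vert u_{n}\Vert_{\varepsilon}.
\]
This is a quadratic inequality of the form $a\,\Vert u_{n}\Vert_{\varepsilon}^{2}\leq C+b_{n}\Vert u_{n}\Vert_{\varepsilon}$ with $a=(\theta-2)/(2\theta)>0$ (since $\theta>2$) and $b_{n}\to 0$, from which the boundedness of $\{\Vert u_{n}\Vert_{\varepsilon}\}$ follows immediately.

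I expect there to be no real obstacle here, as the argument is essentially automatic once (\ref{f3}) is invoked. The only points meriting a word of care are the precise meaning of the $(PS)$ condition, ensuring both that $J_{\varepsilon}(u_{n})$ is bounded and that $J'_{\varepsilon}(u_{n})[u_{n}]=o(1)\,\Vert u_{n}\Vert_{\varepsilon}$, and the observation that (\ref{f3}) alone fixes the sign of the integral term, so that the exponential growth of $f$ need not be controlled at all in this step.
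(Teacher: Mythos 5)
Your proof is correct and is essentially identical to the paper's: both form the combination $J_{\varepsilon}(u_{n})-\tfrac{1}{\theta}J'_{\varepsilon}(u_{n})[u_{n}]$, use (\ref{f3}) (with (\ref{f1}) where $u_n$ vanishes) to discard the nonnegative integral term, and conclude from the resulting inequality $\tfrac{\theta-2}{2\theta}\Vert u_{n}\Vert_{\varepsilon}^{2}\leq C+o_n(1)\Vert u_{n}\Vert_{\varepsilon}$. Your added remark that no Trudinger--Moser estimate is needed at this stage is accurate and consistent with the paper's argument.
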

\begin{proof}
Let $d:=\lim_n J_{\varepsilon}(u_n)\in\mathbb{R}$.
Using (\ref{f3}) we have that
\begin{align*}
\frac{\theta-2}{2\theta} \|u_n\|_\varepsilon^2
&\leq
(\frac{1}{2}-\frac{1}{\theta})\|u_n\|_\varepsilon^2
+\int_{\mathbb{R}^{2}} \Big[\frac{1}{\theta} f(|u_n|^2)|u_n|^2-\frac{1}{2}F(|u_n|^2)\Big]dx\\
&=
J_{\varepsilon}(u_n) - \frac{1}{\theta} J_{\varepsilon}'(u_n)[u_n]\\
&\leq
d+o_n(1) + o_n(1)  \|u_n\|_\varepsilon
\end{align*}
and we can conclude.
\end{proof}

The following lemma is very important for the proof of Theorem \ref{gt62}.
	\begin{lemma}\label{newadd}
		There exists $\bar{\varepsilon}>0$ such that, for all $\varepsilon\in (0, \bar{\varepsilon})$, $c_{\varepsilon}\leq c_{V_0}$.
	\end{lemma}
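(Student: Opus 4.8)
The plan is to bound $c_\varepsilon$ from above by testing $J_\varepsilon$ along a family of functions concentrated, as $\varepsilon\to 0^+$, near a minimum point of $V$ and built from the ground state of the limit problem at level $V_0$. Since $V$ is continuous and $V_\infty>V_0$, the set $M$ in \eqref{M} is nonempty; I fix $x_0\in M$, so $V(x_0)=V_0$. As $V_0\in(0,V^*]$, Lemma \ref{gslimit} provides a positive radial ground state $\omega:=\omega_{V_0}\in H^1(\mathbb{R}^2,\mathbb{R})$ of \eqref{3.2} with $\mu=V_0$, for which $\max_{t\ge0}I_{V_0}(t\omega)=I_{V_0}(\omega)=c_{V_0}$. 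For $R>0$ I would set $\omega_R:=\eta_R\,\omega$, with $\eta_R\in C_c^\infty(\mathbb{R}^2,\mathbb{R})$ equal to $1$ on $B_R(0)$ and vanishing outside $B_{2R}(0)$; by the exponential decay of $\omega$ one has $\omega_R\to\omega$ in $H^1(\mathbb{R}^2,\mathbb{R})$ as $R\to+\infty$, and therefore $\max_{t\ge0}I_{V_0}(t\omega_R)\to c_{V_0}$.

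The test function is $u_\varepsilon(x):=e^{iA(x_0)\cdot x}\,\omega_R\big(x-x_0/\varepsilon\big)$, which belongs to $H_\varepsilon$ by the compactness of $\mathrm{supp}\,\omega_R$, arguing as in Lemma \ref{gt520}. The key computation is the gauge identity: writing $u_\varepsilon=e^{i\tau}v$ with $v:=\omega_R(\cdot-x_0/\varepsilon)$ real and $\tau(x):=A(x_0)\cdot x$, an expansion of $\nabla_{A_\varepsilon}u_\varepsilon=e^{i\tau}\big(v(\nabla\tau-A_\varepsilon)-i\nabla v\big)$ gives
\begin{equation*}
|\nabla_{A_\varepsilon}u_\varepsilon|^2=|\nabla v|^2+v^2\,|A(x_0)-A_\varepsilon|^2 .
\end{equation*}
After the change of variables $z=x-x_0/\varepsilon$ this yields
\begin{equation*}
\|u_\varepsilon\|_\varepsilon^2=\|\nabla\omega_R\|_2^2+\int_{\mathbb{R}^2}V(\varepsilon z+x_0)\,\omega_R^2\,dz+\int_{\mathbb{R}^2}\omega_R^2\,|A(x_0)-A(\varepsilon z+x_0)|^2\,dz .
\end{equation*}
On the fixed compact support of $\omega_R$ one has $V(\varepsilon z+x_0)\to V(x_0)=V_0$ and $A(\varepsilon z+x_0)\to A(x_0)$ uniformly as $\varepsilon\to0^+$, so dominated convergence gives $\|u_\varepsilon\|_\varepsilon^2\to\|\nabla\omega_R\|_2^2+V_0\|\omega_R\|_2^2$. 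Moreover $|u_\varepsilon|=v$, so $\int_{\mathbb{R}^2}F(|t u_\varepsilon|^2)\,dx=\int_{\mathbb{R}^2}F(t^2\omega_R^2)\,dz$ does not depend on $\varepsilon$.

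Combining these facts, for every $t\ge0$,
\begin{equation*}
J_\varepsilon(tu_\varepsilon)-I_{V_0}(t\omega_R)=\tfrac{t^2}{2}\big(\|u_\varepsilon\|_\varepsilon^2-\|\nabla\omega_R\|_2^2-V_0\|\omega_R\|_2^2\big)=:\tfrac{t^2}{2}\,o_\varepsilon(1).
\end{equation*}
Because $F$ controls a power of exponent $p>2$ from below by \eqref{f4}, the maximizers of $t\mapsto J_\varepsilon(tu_\varepsilon)$ lie in a bounded interval $[0,T]$, uniformly in $\varepsilon$ for fixed $R$; hence $\max_{t\ge0}J_\varepsilon(tu_\varepsilon)\le\max_{t\ge0}I_{V_0}(t\omega_R)+o_\varepsilon(1)$. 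By \eqref{minmaxNehari} we have $c_\varepsilon\le\max_{t\ge0}J_\varepsilon(tu_\varepsilon)$, so letting first $\varepsilon\to0^+$ and then $R\to+\infty$ gives $\limsup_{\varepsilon\to0^+}c_\varepsilon\le c_{V_0}$, from which $c_\varepsilon\le c_{V_0}$ follows for all $\varepsilon$ below a threshold $\bar\varepsilon$.

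I expect the main obstacle to be the uniform handling of the two error terms together with the transfer of the $o_\varepsilon(1)$ discrepancy to the maxima. The dominated convergence is harmless since $V$ and $A$ are merely continuous and the integration takes place over the fixed compact set $\mathrm{supp}\,\omega_R$; the delicate point is that the inequality $J_\varepsilon(tu_\varepsilon)\le I_{V_0}(t\omega_R)+\tfrac{t^2}{2}o_\varepsilon(1)$ must be passed to the supremum over $t$ uniformly, which is exactly where the a priori boundedness of the maximizers provided by \eqref{f4} is needed. The necessity of the double limit (first in $\varepsilon$, then in $R$), rather than a single pointwise bound, is an intrinsic feature of the construction, owing to the possible unboundedness of $A$ that forces the cutoff $\eta_R$.
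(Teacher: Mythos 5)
Your proposal is correct and takes essentially the same route as the paper's proof: the same test functions (a cutoff of the ground state $\omega$ of the limit problem at $\mu=V_0$ multiplied by the magnetic phase, the paper simply assuming $V(0)=V_0$ instead of translating to a general $x_0\in M$), the same gauge identity for $\vert\nabla_{A_\varepsilon}u_\varepsilon\vert^2$, the same dominated-convergence limits over the fixed compact support, the same uniform bound on the fiber maximizers via (\ref{f4}), and the same double limit, first in $\varepsilon$ and then in the cutoff radius. The only step you assert rather than prove --- that $\max_{t\geq 0}I_{V_0}(t\omega_R)\to c_{V_0}$ as $R\to+\infty$ follows from $\omega_R\to\omega$ in $H^{1}(\mathbb{R}^2,\mathbb{R})$ --- is precisely what the paper verifies explicitly, by showing that the projection parameters $\tau_{r_m}$ with $\tau_{r_m}\eta_{r_m}\omega\in\mathcal{N}_{V_0}$ remain bounded and converge to $1$.
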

	\begin{proof}
		Assume for simplicity that $V(0)=V_0$ and  let $\omega\in H^{1}(\mathbb{R}^{2}, \mathbb{R})$ be a
		positive ground state solution of the limit problem \eqref{3.2} with $\mu=V_{0}$ (see Lemma \ref{gslimit}) and $\eta\in C_{c}^{\infty}(\mathbb{R}^{2}, [0, 1])$ be a cut-off function such that
		$\eta=1$ in $B_{1}(0)$ and $\eta=0$ in $B^{c}_{2}(0)$. Moreover, let
		$$\omega_{r}(x):=\eta_{r}(x)\omega(x)e^{iA(0)\cdot x},
		\qquad
		\eta_{r}(x)=\eta(x/r)
		\text{ for }  r>0.$$
		Observe that $\vert \omega_{r}\vert=\eta_{r}\omega$ and $\omega_{r}\in H_{\varepsilon}$ by Lemma \ref{gt520}.\\
		Let $\varepsilon_{n}\rightarrow 0^+$ as $n\rightarrow+\infty$. Taking $t_{\varepsilon_{n},
			r}>0$ such that
		$t_{\varepsilon_{n}, r}\omega_{r}\in \mathcal{N}_{\varepsilon_{n}}$, by \eqref{minmaxNehari} we have
		\begin{equation}\label{small}
			c_{\varepsilon_{n}}
			\leq
			J_{\varepsilon_{n}}(t_{\varepsilon_{n}, r}\omega_{r})
			=\frac{t^{2}_{\varepsilon_{n}, r}}{2}\int_{\mathbb{R}^{2}}\vert \nabla_{A_{\varepsilon_{n}}} \omega_{r}\vert^{2}dx
			+\frac{t^{2}_{\varepsilon_{n}, r}}{2}\int_{\mathbb{R}^{2}}V_{\varepsilon_{n}}( x)\vert \omega_{r}\vert^{2}dx
			-\frac{1}{2}\int_{\mathbb{R}^{2}}F(t^{2}_{\varepsilon_{n}, r}\vert \omega_{r}\vert^{2})dx.
		\end{equation}
		Observe that, for any fixed $r>0$,
		\begin{align}\label{small1}
			\lim_{n}\int_{\mathbb{R}^{2}}\vert \nabla_{A_{\varepsilon_{n}}}\omega_{r}\vert^{2}dx=\int_{\mathbb{R}^{2}}\vert \nabla(\eta_{r}\omega)\vert^{2}dx.
		\end{align}
		Indeed,
		\begin{align*}
			\int_{\mathbb{R}^{2}}\vert \nabla_{A_{\varepsilon_{n}}}\omega_{r}\vert^{2}dx
			&=
			\int_{\mathbb{R}^{2}}\Big|\frac{\nabla}{i}(\eta_{r}\omega)-[A_{\varepsilon_n}(x)-A(0)]\eta_{r}\omega\Big|^2dx
			\\
			&=
			\int_{\mathbb{R}^{2}}\vert \nabla(\eta_{r}\omega)\vert^{2}dx+\int_{\mathbb{R}^{2}}\vert A_{\varepsilon_{n}}( x)-A(0)\vert^{2}\eta_{r}^2\omega^2 dx
		\end{align*}
	and
	\[
	\int_{\mathbb{R}^{2}}\vert A_{\varepsilon_{n}}( x)-A(0)\vert^{2}\eta_{r}^2\omega^2 dx
	\leq
	\int_{B_{2r}(0)}\vert A_{\varepsilon_{n}}( x)-A(0)\vert^{2}\omega^2dx
	=o_n(1).
	\]
	Moreover
	\begin{equation}
	\label{small2b}
	\lim_n \int_{\mathbb{R}^{2}}V_{\varepsilon_{n}}( x)\vert\omega_{r}\vert^{2}dx
	=
	V_0\int_{\mathbb{R}^{2}}\eta_{r}^2\omega^{2}dx
	\end{equation}
	since
	\[
	\Big|  \int_{\mathbb{R}^{2}}V_{\varepsilon_{n}}( x)\vert\omega_{r}\vert^{2}dx
	-V_0\int_{\mathbb{R}^{2}}\eta_{r}^2\omega^{2}dx\Big|
	\leq
	\int_{\mathbb{R}^{2}}|V_{\varepsilon_{n}}( x) -V_0|\eta_{r}^2\omega^{2}dx
	\leq
	\int_{B_{2r}(0)} |V_{\varepsilon_{n}}( x) -V_0|\omega^{2}dx
	=o_n(1).
	\]
		We claim that, if $r>0$ is fixed, $\{t_{\varepsilon_{n}, r}\}$ is bounded.\\
		Indeed, let us assume by contradiction that there exists a subsequence, still denoted by $t_{\varepsilon_{n}, r}$, such that $t_{\varepsilon_{n}, r}\to+\infty$ as $n\rightarrow +\infty$.\\
		Since $J'_{\varepsilon_{n}}(t_{\varepsilon_{n}, r}\omega_{r}) [t_{\varepsilon_{n}, r}\omega_{r}]=0$,
		by (\ref{f1}) and (\ref{f4}) we have
		\[
		\int_{\mathbb{R}^{2}}\vert \nabla_{A_{\varepsilon_{n}}} \omega_{r}\vert^{2}dx
		+\int_{\mathbb{R}^{2}}V_{\varepsilon_{n}}( x)\vert
		\omega_{r}\vert^{2}dx
		=\int_{\mathbb{R}^{2}}f(t^{2}_{\varepsilon_{n}, r}\vert \omega_{r}\vert^{2})\vert \omega_{r}\vert^{2}dx
		\geq \int_{B_{r}(0)}f(t^{2}_{\varepsilon_{n}, r}\vert \omega\vert^{2})\vert \omega\vert^{2}dx
		\geq
		Ct^{p-2}_{\varepsilon_{n}, r}
		\to
		+\infty
		\]
		as $n\rightarrow +\infty$, which is a contradiction with \eqref{small1} and \eqref{small2b}.\\
		Thus, up to a subsequence, there exists $t_{r}\geq0$ such that $t_{\varepsilon_{n}, r}\rightarrow t_{r}$ as $n\rightarrow +\infty$ for $r$ fixed.\\		
		Therefore, by \eqref{small},
		\begin{equation}
		\label{cen}
		\limsup_n c_{\varepsilon_{n}}
		\leq \frac{t^{2}_{r}}{2}\int_{\mathbb{R}^{2}}\vert \nabla(\eta_{r}\omega)\vert^{2}dx
		+\frac{t^{2}_{r}}{2} V_0\int_{\mathbb{R}^{2}}\vert \eta_{r}\omega\vert^{2}dx
		-\frac{1}{2}\int_{\mathbb{R}^{2}}F(t^{2}_{r}\vert\eta_{r}\omega\vert^{2})dx
		=I_{V_0}(t_{r}\eta_{r}\omega).
		\end{equation}
	Let us consider now $r_{m}\rightarrow +\infty$ as $m\rightarrow+\infty$.\\
		Since
			\[
			\int_{\mathbb{R}^{2}}\vert \nabla (\eta_{r_{m}}\omega-\omega)\vert^{2}dx
			=
			\int_{\mathbb{R}^{2}}\vert \nabla \eta_{r_{m}}\vert^{2}\omega^2 dx
			+2\int_{\mathbb{R}^{2}}(\eta_{r_{m}}-1) \omega\nabla \eta_{r_{m}}\nabla\omega dx
			+\int_{\mathbb{R}^{2}}(\eta_{r_{m}}-1)^2 \vert \nabla \omega\vert^{2}dx,
			\]
\[
\int_{\mathbb{R}^{2}}\vert \nabla \eta_{r_{m}}\vert^{2}\omega^2 dx
\leq
\frac{C}{r_m^2} \int_{B_{r_m}^c}\omega^2 dx
=o_m(1),
\]
\[
\Big|\int_{\mathbb{R}^{2}}(\eta_{r_{m}}-1) \omega\nabla \eta_{r_{m}}\nabla\omega dx\Big|
\leq
\frac{C}{r_m}\Big( 	\int_{B_{r_m}^c}\omega^2 dx 	\Big)^{1/2}
\Big( \int_{B_{r_m}^c}|\nabla\omega|^2 dx \Big)^{1/2}
=o_m(1),
\]
\[
\int_{\mathbb{R}^{2}}(\eta_{r_{m}}-1)^2 \vert \nabla \omega\vert^{2}dx
\leq
\int_{B_{r_m}^c}|\nabla\omega|^2 dx
=o_{m}(1),
\]
\[
\int_{\mathbb{R}^{2}}(\eta_{r_{m}}-1)^2\omega^{2}dx
\leq
\int_{B_{r_m}^c}\omega^2 dx
=o_{m}(1),
\]
then $\eta_{r_{m}}\omega\rightarrow \omega$ in $H^{1}(\mathbb{R}^{2}, \mathbb{R})$ as $m\rightarrow +\infty$.\\
Now, let $\tau_{r_{m}}>0$ such that $\tau_{r_{m}}\eta_{r_{m}}\omega\in \mathcal{N}_{V_{0}}$.\\
Arguing as before, since
\begin{equation}
\label{Nehtrn}
\int_{\mathbb{R}^{2}}\vert \nabla (\eta_{r_{m}}\omega)\vert^{2}dx
+V_{0}\int_{\mathbb{R}^{2}} \eta_{r_{m}}^2\omega^{2}dx
=\int_{\mathbb{R}^{2}}f(\tau^{2}_{r_{m}}\vert\eta_{r_{m}}\omega\vert^{2})\eta_{r_{m}}^2\omega^{2}dx,
\end{equation}
we have that $\{\tau_{r_{m}}\}$ is bounded.\\
Thus, up to a subsequence, $\tau_{r_{m}}\rightarrow \bar{\tau}\geq 0$ as $m\rightarrow +\infty$ and, by \eqref{Nehtrn}, the uniqueness of the projection in $\mathcal{N}_{V_{0}}$, and since $\omega\in \mathcal{N}_{V_{0}}$ we get that $\bar{\tau}=1$.\\
Hence, by \eqref{cen} and \eqref{maxmu},
\[
\limsup_n c_{\varepsilon_{n}}
\leq I_{V_0}(t_{r_m}\eta_{r_m}\omega)
\leq I_{V_0}(\tau_{r_m}\eta_{r_m}\omega)
\]
and so, passing to the limit as $m\to+\infty$, we get
\[
\limsup_n c_{\varepsilon_{n}}
\leq
I_{V_{0}}(\omega)=c_{V_{0}}.
\]
\end{proof}

In our arguments we will use the following result that is a version of the celebrated Lions Lemma (see e.g. \cite{rW} and \cite[Lemma 3.3]{rDJ}).

\begin{lemma}\label{le35}
Let $\{u_{n}\} \subset H_{\varepsilon}$ satisfy $J_\varepsilon'(u_n)[u_n]\to 0$, $u_{n}\rightharpoonup 0$ in $H_{\varepsilon}$, as $n\to +\infty$,
and $\ell^2:=\limsup_{n}\| \nabla |u_n| \|_2^2  <1 $.  Then, one of the following alternatives occurs:
\begin{enumerate}[label=(\roman{*}), ref=\roman{*}]
	\item \label{le35i}$u_{n}\rightarrow 0$ in $H_{\varepsilon}$ as $n\to +\infty$;
	\item \label{le35ii} there are a sequence $\{y_{n}\} \subset \mathbb{R}^{2}$ and constants $R$, $\beta>0$ such that
	\begin{equation*}
	\liminf_n \int_{B_{R}(y_{n})}\vert u_{n}\vert^{2}dx\geq \beta.
	\end{equation*}
\end{enumerate}
\end{lemma}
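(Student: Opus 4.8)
The plan is to prove the dichotomy by contradiction: I assume that alternative (\ref{le35ii}) fails and deduce (\ref{le35i}). First I note that, since $u_{n}\rightharpoonup 0$ in $H_{\varepsilon}$, the sequence $\{u_{n}\}$ is bounded in $H_{\varepsilon}$; hence, by the diamagnetic inequality \eqref{2.1} and \eqref{emb}, $\{|u_{n}|\}$ is bounded in $H^{1}(\mathbb{R}^{2},\mathbb{R})$. The negation of (\ref{le35ii}) means exactly that vanishing occurs, that is
\[
\lim_{n}\sup_{y\in\mathbb{R}^{2}}\int_{B_{R}(y)}|u_{n}|^{2}\,dx=0\qquad\text{for every }R>0,
\]
because if for some $R$ this limit superior were a positive number $\gamma$, I could select $\{y_{n}\}$ and a subsequence along which $\int_{B_{R}(y_{n})}|u_{n}|^{2}\,dx\geq\gamma/2$, which is precisely (\ref{le35ii}). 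Applying the Lions-type lemma \cite[Chapter 6, Lemma 8.4]{rK} to $\{|u_{n}|\}$, I then obtain $\|u_{n}\|_{\tau}=\||u_{n}|\|_{\tau}\to 0$ for every $\tau>2$.

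Next I exploit $J'_{\varepsilon}(u_{n})[u_{n}]\to 0$, which reads $\|u_{n}\|_{\varepsilon}^{2}=\int_{\mathbb{R}^{2}}f(|u_{n}|^{2})|u_{n}|^{2}\,dx+o_{n}(1)$, and I estimate the right-hand side. By \eqref{2.10}, fixing $q>2$, for any $\zeta>0$ and $\alpha>4\pi$ there is $C>0$ with
\[
\int_{\mathbb{R}^{2}}f(|u_{n}|^{2})|u_{n}|^{2}\,dx\leq \zeta\|u_{n}\|_{2}^{2}+C\int_{\mathbb{R}^{2}}|u_{n}|^{q}(e^{\alpha|u_{n}|^{2}}-1)\,dx.
\]
Since $\|u_{n}\|_{\varepsilon}^{2}\geq V_{0}\|u_{n}\|_{2}^{2}$, choosing $\zeta<V_{0}$ lets me absorb the first term into the left-hand side. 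For the second term I use the H\"older inequality together with \eqref{ineqe}, getting
\[
\int_{\mathbb{R}^{2}}|u_{n}|^{q}(e^{\alpha|u_{n}|^{2}}-1)\,dx\leq \|u_{n}\|_{qq^{*}}^{q}\Big(\int_{\mathbb{R}^{2}}(e^{p^{*}\alpha|u_{n}|^{2}}-1)\,dx\Big)^{1/p^{*}},
\]
where $p^{*}$ and $q^{*}$ are conjugate exponents.

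The crucial, and most delicate, step is to bound the exponential integral uniformly in $n$, and this is exactly where the hypothesis $\ell^{2}<1$ enters. I fix $\kappa\in(0,1-\ell^{2})$, so that $\|\nabla|u_{n}|\|_{2}^{2}<\ell^{2}+\kappa<1$ for $n$ large, and then select
\[
p^{*}\in\Big(1,\frac{1}{\ell^{2}+\kappa}\Big),\qquad \alpha\in\Big(4\pi,\frac{4\pi}{p^{*}(\ell^{2}+\kappa)}\Big),
\]
so that $p^{*}\alpha(\ell^{2}+\kappa)<4\pi$; both intervals are nonempty precisely because $\ell^{2}+\kappa<1$. Rewriting
\[
\int_{\mathbb{R}^{2}}(e^{p^{*}\alpha|u_{n}|^{2}}-1)\,dx=\int_{\mathbb{R}^{2}}\Big(e^{p^{*}\alpha(\ell^{2}+\kappa)\big(|u_{n}|/\sqrt{\ell^{2}+\kappa}\big)^{2}}-1\Big)\,dx,
\]
and noting that $\|\nabla(|u_{n}|/\sqrt{\ell^{2}+\kappa})\|_{2}\leq 1$ with $\||u_{n}|/\sqrt{\ell^{2}+\kappa}\|_{2}$ bounded, Lemma \ref{le24} yields a bound independent of $n$. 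Since $qq^{*}>2$, the factor $\|u_{n}\|_{qq^{*}}^{q}\to 0$ by the vanishing established above, and therefore $\int_{\mathbb{R}^{2}}f(|u_{n}|^{2})|u_{n}|^{2}\,dx\to 0$. Combining everything gives $(1-\zeta/V_{0})\|u_{n}\|_{\varepsilon}^{2}\leq o_{n}(1)$, whence $\|u_{n}\|_{\varepsilon}\to 0$, which is alternative (\ref{le35i}). The main obstacle is this uniform exponential estimate: all other steps are standard, but it forces the quantitative choice of $p^{*}$ and $\alpha$ compatible with the Trudinger--Moser threshold $4\pi$, and it is precisely why one needs $\ell^{2}<1$ rather than mere boundedness.
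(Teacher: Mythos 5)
Your proof is correct and follows essentially the same route as the paper's: negate alternative (\ref{le35ii}) to get vanishing, apply the Lions-type lemma of \cite[Chapter 6, Lemma 8.4]{rK} to $\{|u_n|\}$, estimate $\int_{\mathbb{R}^2}f(|u_n|^2)|u_n|^2dx$ via \eqref{2.10}, absorb the $\zeta$-term using $V_0$, and control the exponential factor through H\"older, \eqref{ineqe}, and Lemma \ref{le24} with exactly the same quantitative choice of $\kappa$, $p^*$ (the paper's $r$), and $\alpha$ dictated by $\ell^2<1$.
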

	\begin{proof}
		If (\ref{le35ii}) does not hold, then, for every $R>0$,
		\begin{equation*}
		\lim_n \sup_{y\in \mathbb{R}^{2}}\int_{B_{R}(y)}\vert u_{n}\vert^{2}dx=0.
		\end{equation*}
		Since $\{|u_{n}|\}$ is bounded in $H^1(\mathbb{R}^2)$,  \cite[Chapter 6, Lemma 8.4]{rK} implies that
		\begin{equation}
		\label{kes}
		\| u_{n}\|_\tau\rightarrow 0
		\text{ as }
		n\rightarrow+\infty
		\text{ for all }
		\tau>2.
		\end{equation}
		Moreover, using \eqref{2.10},  we have that, fixed $q>2$, for any $\zeta>0$ and $\alpha>4\pi$, there exists $C>0$ such that
		\begin{equation}
		\label{zetaC}
		\begin{split}
		0\leq \Vert u_{n}\Vert_{\varepsilon}^{2}
		&=
		\int_{\mathbb{R}^{2}}f(\vert u_{n}\vert^{2})\vert u_{n}\vert^{2}dx+o_{n}(1)\\
		&\leq \zeta \Vert u_{n}\Vert_2^{2}
		+C\int_{\mathbb{R}^{2}}\vert u_{n}\vert^{q}(e^{\alpha\vert u_{n}\vert^{2}}-1)dx+o_{n}(1)\\
		&\leq \frac{\zeta}{V_{0}}\Vert u_{n}\Vert_{\varepsilon}^{2}+C\int_{\mathbb{R}^{2}}\vert u_{n}\vert^{q}(e^{\alpha\vert u_{n}\vert^{2}}-1)dx+o_{n}(1).
		\end{split}
		\end{equation}
Let us fix $\kappa\in(0,1-\ell^2)$ and take
	\[
	r\in\Big(1,\frac{1}{\ell^2+\kappa}\Big),
	\qquad
	\alpha\in\Big(4\pi,\frac{4\pi}{r(\ell^2+\kappa)}\Big).
	\]
		Thus, the H\"{o}lder inequality, \eqref{ineqe}, and Lemma \ref{le24}, imply that, for $n$ large enough (such that $\|\nabla |u_n|\|_2^2<\ell^2+\kappa$),
		\begin{equation*}
		\int_{\mathbb{R}^{2}}\vert u_{n}\vert^{q}(e^{\alpha\vert u_{n}\vert^{2}}-1)dx
        \leq
        \Vert u_{n}\Vert_{qr'}^{q}\Big(\int_{\mathbb{R}^{2}}(e^{r\alpha(\ell^2+\kappa)\Big(\frac{\vert u_{n}\vert}{\sqrt{\ell^2+\kappa}}\Big)^{2}}-1)dx\Big)^{1/r}dx
		\leq
		C \Vert u_{n}\Vert_{qr'}^{q}
		\end{equation*}
		and so, using \eqref{kes} and \eqref{zetaC} we can conclude.
	\end{proof}

Now we prove a fundamental result on the $(PS)_{d}$ sequences for $J_{\varepsilon}$ in the case $V_{\infty}<\infty$.

\begin{lemma}\label{compactness}
Let $d\in \mathbb{R}$, $V_{\infty}< +\infty$ and $\{u_{n}\}\subset H_{\varepsilon}$ be a $(PS)_{d}$ sequence for $J_{\varepsilon}$ such that
$\ell^2:=\limsup_{n}\| \nabla |u_n| \|_2^2  <1 $  and $u_{n}\rightharpoonup 0$ in $H_{\varepsilon}$.
If $u_{n}\not\rightarrow 0$ in $H_{\varepsilon}$, then $d\geq c_{V_{\infty}}$.
\end{lemma}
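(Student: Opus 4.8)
The plan is to combine the concentration--compactness alternative of Lemma~\ref{le35} with the diamagnetic inequality \eqref{2.1}, so as to reduce the magnetic problem to the real limit functional $I_{V_\infty}$ (note that here $V^*=V_\infty\in(0,V^*]$, so all the results of Section~\ref{Secthree} apply with $\mu=V_\infty$). First I would apply Lemma~\ref{le35} to $\{u_n\}$: since $u_n\not\to 0$ in $H_\varepsilon$, alternative \eqref{le35i} fails, hence there are $\{y_n\}\subset\mathbb{R}^2$ and $R,\beta>0$ with $\liminf_n\int_{B_R(y_n)}|u_n|^2\,dx\geq\beta$. Because $u_n\rightharpoonup 0$ in $H_\varepsilon$ forces $u_n\to 0$ in $L^2_{\rm loc}(\mathbb{R}^2,\mathbb{C})$ (Lemma~\ref{lem21}), the points $y_n$ cannot remain bounded, so, up to a subsequence, $|y_n|\to+\infty$. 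Setting $\tilde v_n(x):=|u_n(x+y_n)|$, the diamagnetic inequality gives that $\{\tilde v_n\}$ is bounded in $H^1(\mathbb{R}^2,\mathbb{R})$ with $\limsup_n\|\nabla\tilde v_n\|_2^2=\ell^2<1$; thus $\tilde v_n\rightharpoonup\tilde v$ in $H^1$, $\tilde v_n\to\tilde v$ in $L^2_{\rm loc}$ and a.e., and the mass bound yields $\int_{B_R(0)}\tilde v^2\geq\beta$, so $\tilde v\neq 0$.

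Next I would record an energy identity. Since $J'_\varepsilon(u_n)[u_n]\to0$ and the quantity $\frac12\int[f(|u|^2)|u|^2-F(|u|^2)]\,dx$ depends only on $|u|$ and is translation invariant, one obtains
\begin{equation*}
d=\lim_n\Big(J_\varepsilon(u_n)-\tfrac12 J'_\varepsilon(u_n)[u_n]\Big)
=\lim_n\frac12\int_{\mathbb{R}^2}\big[f(\tilde v_n^2)\tilde v_n^2-F(\tilde v_n^2)\big]\,dx.
\end{equation*}
The integrand is nonnegative by \eqref{f3} and converges a.e. to $f(\tilde v^2)\tilde v^2-F(\tilde v^2)$, so Fatou's Lemma gives $d\geq g(\tilde v)$, where $g(w):=\frac12\int[f(w^2)w^2-F(w^2)]\,dx=I_{V_\infty}(w)-\frac12 I'_{V_\infty}(w)[w]$.

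It then remains to compare $g(\tilde v)$ with $c_{V_\infty}$, and for this I would prove $I'_{V_\infty}(\tilde v)[\tilde v]\leq0$. Writing $a_n:=\int(|\nabla\tilde v_n|^2+V_\varepsilon(\cdot+y_n)\tilde v_n^2)$ and $b_n:=\int f(\tilde v_n^2)\tilde v_n^2$, the diamagnetic inequality together with $J'_\varepsilon(u_n)[u_n]\to0$ yields $a_n\leq b_n+o_n(1)$. By weak lower semicontinuity $\int|\nabla\tilde v|^2\leq\liminf_n\int|\nabla\tilde v_n|^2$, while, since $|y_n|\to+\infty$ and $V_\infty=\liminf_{|x|\to+\infty}V(x)$, for every $M,\delta>0$ one has $V_\varepsilon(x+y_n)\geq V_\infty-\delta$ on $B_M(0)$ for $n$ large, which (letting $M\to+\infty$, $\delta\to0^+$) gives $V_\infty\int\tilde v^2\leq\liminf_n\int V_\varepsilon(\cdot+y_n)\tilde v_n^2$. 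Combining these, $\int(|\nabla\tilde v|^2+V_\infty\tilde v^2)\leq\liminf_n a_n\leq\liminf_n b_n$. Once I establish the genuine convergence $b_n\to\int f(\tilde v^2)\tilde v^2$, this chain gives $I'_{V_\infty}(\tilde v)[\tilde v]\leq0$, so the unique $s^*>0$ with $s^*\tilde v\in\mathcal N_{V_\infty}$ (see \eqref{maxmu}) satisfies $s^*\leq1$. As $t\mapsto g(t\tilde v)$ is nondecreasing, its derivative being governed by $f'\geq0$, I conclude
\begin{equation*}
c_{V_\infty}\leq I_{V_\infty}(s^*\tilde v)=g(s^*\tilde v)\leq g(\tilde v)\leq d.
\end{equation*}

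The main obstacle is exactly the convergence $b_n\to\int f(\tilde v^2)\tilde v^2$ under the sole hypothesis $\ell^2<1$, since the plain bound from \eqref{f3} only gives the wrong Fatou inequality and the $L^2$ mass of $\tilde v_n$ need not pass to the limit when $V_\infty<+\infty$. I expect to obtain it by the same mechanism used in the proofs of Lemma~\ref{2.0} and Lemma~\ref{lemFat}: using \eqref{2.10}, \eqref{ineqe}, and Lemma~\ref{le24}, a choice $\kappa\in(0,1-\ell^2)$, $p^*\in\big(1,\frac{1}{\ell^2+\kappa}\big)$ and $\alpha\in\big(4\pi,\frac{4\pi}{p^*(\ell^2+\kappa)}\big)$ makes $\{(e^{\alpha\tilde v_n^2}-1)\}$ bounded in $L^{p^*}(\mathbb{R}^2,\mathbb{R})$, so that the nonlinear densities are uniformly integrable; combining a.e.\ convergence with local domination (via \cite[Lemma~A.1]{rW}) and the uniform tail control provided by Lemma~\ref{2.01} for $s=2$ and $s=2(q-1)q^*$ then upgrades the Fatou estimate to the required convergence.
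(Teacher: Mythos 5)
Your overall architecture is sound in its outer layers: the Fatou step $d\ge g(\tilde v)$ is correct, and the final chain $c_{V_\infty}\le I_{V_\infty}(s^*\tilde v)=g(s^*\tilde v)\le g(\tilde v)\le d$ is essentially the computation the paper performs in its ``Case 2''. The genuine gap is the step you yourself flag as the main obstacle: the convergence $b_n\to\int_{\mathbb{R}^2}f(\tilde v^2)\tilde v^2\,dx$, which you need in order to conclude $I'_{V_\infty}(\tilde v)[\tilde v]\le 0$. This is not a technical difficulty to be absorbed by Trudinger--Moser estimates; it is false in general under your hypotheses. Fatou only gives $\liminf_n b_n\ge\int f(\tilde v^2)\tilde v^2\,dx$, while your chain $\int(|\nabla\tilde v|^2+V_\infty\tilde v^2)\le\liminf_n b_n\le\int f(\tilde v^2)\tilde v^2$ needs the reverse inequality, and that fails as soon as the $(PS)_d$ sequence splits into bumps drifting apart: if $u_n\approx w_1(\cdot-z_n^1)+w_2(\cdot-z_n^2)$ with $|z_n^1|,|z_n^2|,|z_n^1-z_n^2|\to+\infty$, then $u_n\rightharpoonup0$, Lemma \ref{le35} recenters at $y_n\approx z_n^1$, $\tilde v_n\rightharpoonup\tilde v=|w_1|$, but $b_n\to\int f(w_1^2)w_1^2+\int f(w_2^2)w_2^2$, strictly larger than $\int f(\tilde v^2)\tilde v^2$. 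Nothing in the hypotheses ($(PS)_d$, $\ell^2<1$, $u_n\rightharpoonup0$, $u_n\not\to0$) excludes this dichotomy; it is exactly the situation the lemma must handle. The tools you invoke cannot close the gap: Lemma \ref{2.01} bounds $\limsup_j\int_{B_j(0)\setminus B_r(0)}|u_{n_j}|^s\,dx$, i.e.\ mass in annuli whose outer radius grows with the diagonal index, and says nothing about the mass outside $B_j(0)$, which is precisely where an escaping bump sits. (In Lemma \ref{2.0} this loophole is harmless only because the integrand there vanishes identically on $B_j^c(0)$, $\hat u_j$ being supported in $B_j(0)$; your integrand $f(\tilde v_n^2)\tilde v_n^2$ vanishes nowhere.) Nor can you recover $I'_{V_\infty}(\tilde v)[\tilde v]\le0$ by passing to the limit in the equation satisfied by the translates, since $A_\varepsilon(\cdot+y_n)$ has no limit when $|y_n|\to\infty$.

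The paper's proof is built to dodge exactly this point: instead of projecting the weak limit onto $\mathcal{N}_{V_\infty}$, it projects the sequence, taking $t_n$ with $t_n|u_n|\in\mathcal{N}_{V_\infty}$, and proves $\limsup_n t_n\le1$ by contradiction. Assuming $t_n\ge1+\delta$, it subtracts the Nehari identity for $t_n|u_n|$ from $J'_\varepsilon(u_n)[u_n]=o_n(1)$, uses the diamagnetic inequality \eqref{2.1} and \eqref{V} to get $\int\big(f(t_n^2|u_n|^2)-f(|u_n|^2)\big)|u_n|^2\,dx\le C\eta+o_n(1)$, and then applies Fatou on a fixed ball around the concentration points to the nonnegative integrand $\big(f((1+\delta)^2v_n^2)-f(v_n^2)\big)v_n^2$ --- that is, Fatou is used only in the legitimate lower-semicontinuity direction, where escaping mass can only strengthen the inequality. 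The conclusion then follows from a case analysis: if $t_n\to1$, one compares $J_\varepsilon(u_n)$ with $I_{V_\infty}(t_n|u_n|)\ge c_{V_\infty}$ directly, the error $\int\big(F(t_n^2|u_n|^2)-F(|u_n|^2)\big)dx$ being $o_n(1)$ by the Mean Value Theorem and Trudinger--Moser; if $\limsup_n t_n<1$, one runs your monotonicity chain on $t_n|u_n|$ instead of $s^*\tilde v$. To salvage your scheme you would need to replace the (false) global convergence of $b_n$ by a sequence-level or localized statement --- for instance testing $J'_\varepsilon(u_n)$ against $u_n\psi_R^2(\cdot-y_n)$ with cut-offs $\psi_R$ and letting $R\to\infty$ after $n$ --- which is a substantially different argument from the one you outline; as written, the proof does not go through.
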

\begin{proof}
For every $n\in \mathbb{N}$, let $t_n:=t(|u_{n}|)\in (0, \infty)$ be such that $t_{n}\vert u_{n}\vert \in \mathcal{N}_{V_{\infty}}$ (see the first part of Section \ref{Secthree}).\\
Firstly we show that $\lim\sup_{n}t_{n}\leq 1$.\\
Assume by contradiction that there exist $\delta>0$ and a subsequence, still denoted by $\{t_{n}\}$, such that, for every $n\in \mathbb{N}$,
\begin{equation}\label{large}
t_{n}\geq 1+\delta.
\end{equation}
Since $\{u_{n}\}$ is a $(PS)_{d}$ sequence for $J_{\varepsilon}$, we have
\begin{equation}\label{large1}
\int_{\mathbb{R}^{2}}(\vert\nabla_{A_{\varepsilon}}u_{n}\vert^{2}+ V_\varepsilon (x)\vert u_{n}\vert^{2})dx=\int_{\mathbb{R}^{2}}f(\vert u_{n}\vert^{2})\vert
u_{n}\vert^{2}dx+o_{n}(1).
\end{equation}
On the other hand, $t_{n}\vert u_{n}\vert\in\mathcal{N}_{V_{\infty}}$, then one has
\begin{equation}\label{large2}
\int_{\mathbb{R}^{2}}\Big(\vert \nabla\vert u_{n}\vert \vert^{2}+ V_{\infty}\vert u_{n}\vert^{2}\Big)dx=\int_{\mathbb{R}^{2}}f(t_{n}^{2}\vert
u_{n}\vert^{2})\vert u_{n}\vert^{2}dx.
\end{equation}
Putting together \eqref{large1}, \eqref{large2} and using the diamagnetic inequality \eqref{2.1} we obtain
\begin{equation*}
\int_{\mathbb{R}^{2}}\Big(f(t_{n}^{2}\vert u_{n}\vert^{2})-f(\vert u_{n}\vert^{2})\Big)\vert u_{n}\vert^{2}dx\leq \int_{\mathbb{R}^{2}}(V_{\infty}-V_\varepsilon(
x))\vert u_{n}\vert^{2}dx+o_{n}(1).
\end{equation*}
Now, by the assumption \eqref{V}, we can see that for every $\eta>0$ there exists $R=R(\eta)>0$ such that, for every $\vert x\vert\geq R/\varepsilon$,
\begin{equation*}
V_{\infty}-V_\varepsilon(x)\leq \eta.
\end{equation*}
Then, using the boundedness of $\{u_{n}\}$ in $H_{\varepsilon}$ and that $u_{n}\rightarrow 0$ in $L^{2}(B_{R/\varepsilon}(0), \mathbb{C})$ by Lemma \ref{lem21},
\[
\int_{\mathbb{R}^{2}}(V_{\infty}-V(\varepsilon x))\vert u_{n}\vert^{2}dx
\leq
V_{\infty}\int_{B_{R/\varepsilon}(0)}\vert u_{n}\vert^{2}dx
+\eta\int_{B_{R}^{c}(0)}\vert u_{n}\vert^{2}dx
\leq o_{n}(1)+ C\eta.
\]
Thus,
\begin{equation}\label{large5}
\int_{\mathbb{R}^{2}}\Big(f(t_{n}^{2}\vert u_{n}\vert^{2})-f(\vert u_{n}\vert^{2})\Big)\vert u_{n}\vert^{2}dx \leq  o_{n}(1)+ C\eta.
\end{equation}
Moreover, since $u_{n}\not\rightarrow 0$ in $H_{\varepsilon}$, by Lemma \ref{le35}, there exist a sequence $\{y_{n}\}\subset \mathbb{R}^{2}$,
and two positive numbers $R$, $\beta$ such that
\begin{equation}\label{large6}
\liminf_{n\rightarrow\infty}\int_{B_{R}(y_{n})}\vert u_{n}\vert^{2}dx\geq \beta.
\end{equation}
Now, let us consider $v_{n}:=\vert u_{n}\vert(\cdot+ y_{n})$. Taking into account \eqref{V}, the diamagnetic inequality \eqref{2.1}, and the boundedness of  $\{u_{n}\}$ in
$H_{\varepsilon}$, we can see that
\begin{equation*}
\| \nabla v_{n} \|_2^2 + V_{0} \| v_{n} \|_2^2=\| \nabla |u_n| \|_2^2 + V_{0} \| u_n \|_2^2\leq \Vert u_{n}\Vert_{\varepsilon}^{2}\leq C.
\end{equation*}
Therefore $v_{n}\rightharpoonup v$ in $H^{1}(\mathbb{R}^{2}, \mathbb{R})$,  $v_{n}\rightarrow v$ in $L^{r}_{\text{loc}}(\mathbb{R}^{2}, \mathbb{R})$ for all $r\geq 1$, and, by \eqref{large6},
\begin{equation*}
\int_{B_{R}(0)}\vert v\vert^{2}dx=\lim_{n\rightarrow\infty}\int_{B_{R}(0)}\vert v_{n}\vert^{2}dx\geq\beta.
\end{equation*}
Thus there exists $\Omega\subset B_{R}(0)$ with positive measure  such that $v\not\equiv 0$ in $\Omega$.\\
Moreover, by \eqref{large}, (\ref{f4}), and \eqref{large5}, we obtain
\begin{equation*}
\int_{\mathbb{R}^{2}}\Big(f((1+\delta)^{2}v_{n}^{2})-f(v_{n}^{2})\Big)v_{n}^{2}dx\leq o_{n}(1)+ C\eta,
\end{equation*}
and so, (\ref{f4}) and the Fatou's Lemma imply
\begin{equation*}
0<\int_{\Omega}\Big(f((1+\delta)^{2}v^{2})-f(v^{2})\Big)v^{2}dx\leq  C \eta.
\end{equation*}
Thus, by the arbitrariness of $\eta>0$, we get a contradiction.\\
Now, two cases can occur.\\
{\bf Case 1:} $\limsup_{n}t_{n}=1$.\\
In this case there exists a subsequence still denoted by $\{t_{n}\}$ such that $t_{n}\rightarrow 1$.\\
Taking into account that $\{u_{n}\}$ is a $(PS)_{d}$ sequence for $J_{\varepsilon}$, \eqref{maxmu}, \eqref{cmu}, and the diamagnetic inequality \eqref{2.1}, we
have
\begin{equation}
\label{large7}
\begin{split}
d+o_{n}(1)
&=
J_{\varepsilon}(u_{n})
\geq
J_{\varepsilon}(u_{n})-I_{V_{\infty}}(t_{n}\vert u_{n}\vert )+c_{V_{\infty}}\\
&\geq
\frac{1-t_{n}^{2}}{2} \int_{\mathbb{R}^{2}}\vert \nabla\vert u_{n}\vert\vert^{2}dx
+\frac{1}{2}\int_{\mathbb{R}^{2}}\Big(V_\varepsilon(x)-t_{n}^{2}V_{\infty}\Big)\vert u_{n}\vert^{2}dx\\
&\qquad
+\frac{1}{2}\int_{\mathbb{R}^{2}}\Big(F(t_{n}^{2}\vert u_{n}\vert^{2})-F(\vert u_{n}\vert^{2})\Big)dx
+c_{V_{\infty}}.
\end{split}
\end{equation}
Since $\{\vert u_{n}\vert\}$ is bounded in $H^{1}(\mathbb{R}^{2}, \mathbb{R})$ and $t_{n}\rightarrow 1$, it is easy to see that
\begin{equation}\label{large8}
\frac{1-t_{n}^{2}}{2} \int_{\mathbb{R}^{2}}\vert \nabla\vert u_{n}\vert\vert^{2}dx=o_{n}(1).
\end{equation}
Moreover, by (\ref{V}), we have that for every $\eta>0$ there exists $R=R(\zeta)>0$ such that for any $x\in \mathbb{R}^{2}$ with $\vert  x\vert> R/\varepsilon$, it holds
\begin{equation*}
V_\varepsilon(x)-t_{n}^{2}V_{\infty}
=(V_\varepsilon(x)-V_{\infty})+(1-t_{n}^{2})V_{\infty}
\geq -\eta+(1-t_{n}^{2})V_{\infty}.
\end{equation*}
Thus, using as before the boundedness of  $\{u_{n}\}$  in $H_{\varepsilon}$, that $\vert u_{n}\vert\rightarrow 0$ in $L^{r}(B_{R/\varepsilon})$ for $r\geq 1$, and $t_{n}\rightarrow 1$, for $n$ large enough we get
\begin{equation}
\label{large9}
\begin{split}
\int_{\mathbb{R}^{2}}\Big(V_\varepsilon (x)-t_{n}^{2}V_{\infty}\Big)\vert u_{n}\vert^{2}dx
&\geq(V_{0}-t_{n}^{2}V_{\infty})\int_{B_{R/\varepsilon}(0)}\vert u_{n}\vert^{2}dx\\
&\qquad
+((1-t_{n}^{2}) V_{\infty} -\eta ) \int_{B_{R/\varepsilon}^{c}(0)}\vert u_{n}\vert^{2}dx\\
&\geq
o_{n}(1)-C\eta.
\end{split}
\end{equation}
Finally let $\vartheta,l>1$ such that $l\vartheta\ell^2<1$. Since $t_n \to 1$, for $n$ large enough,  applying the Mean Value Theorem and using also (\ref{f4}), we get
\[
\vert F(t_{n}^{2}\vert u_{n}\vert^{2})-F(\vert u_{n}\vert^{2})\vert
\leq
\vert t_{n}^{2}-1\vert f(\theta_{n}(x)\vert u_{n}\vert^{2}) \vert u_{n}\vert^{2}
\leq
\vert t_{n}^{2}-1\vert f(\vartheta \vert u_{n}\vert^{2}) \vert u_{n}\vert^{2}.
\]
Then
\begin{equation}
	\label{large10}
	\Big\vert\int_{\mathbb{R}^{2}}\Big(F(t_{n}^{2}\vert u_{n}\vert^{2})-F(\vert u_{n}\vert^{2})\Big)dx\Big\vert
	\leq
	\vert t_{n}^{2}-1\vert \int_{\mathbb{R}^{2}} f(\vartheta\vert
	u_{n}\vert^{2}) \vert u_{n}\vert^{2}dx
	=o_{n}(1).
\end{equation}
Indeed, $t_{n}\rightarrow 1$. Moreover, if we take
\[
\kappa\in\Big(0,\frac{1}{l\vartheta}-\ell^2\Big)
\text{ and }
\alpha\in\Big(4\pi,\frac{4\pi}{l\vartheta(\ell^2+\kappa)}\Big),
\]
by \eqref{2.8}, fixed $q>2$, for any $\zeta>0$, there exists a constant $C>0$ such that
\[
\int_{\mathbb{R}^{2}} f(\vartheta\vert
u_{n}\vert^{2}) \vert u_{n}\vert^{2}dx
\leq
\zeta\| u_{n}\|_2^{2}
+C\int_{\mathbb{R}^{2}}\vert u_{n}\vert^{q}(e^{\vartheta\alpha \vert	u_{n}\vert^{2}}-1)dx
\]
and the right hand side is bounded due to the boundedness of $\{u_n\}$ and since the H\"older inequality, \eqref{ineqe}, Lemma \ref{le24}, and
$$\Vert \nabla\vert u_{n}\vert\Vert_{2}^{2}
< \ell^2+\kappa<\frac{1}{l\vartheta}<1
\text{ for }n\text{ large enough},
$$
imply that
\begin{align*}
\int_{\mathbb{R}^{2}}\vert u_{n}\vert^{q}(e^{\vartheta\alpha \vert	u_{n}\vert^{2}}-1)dx
&\leq
\|u_{n}\|_{ql'}^q
\big(\int_{\mathbb{R}^{2}}(e^{l\vartheta\alpha \vert u_{n}\vert^{2}}-1)dx\big)^{1/l}\\
&\leq
\|u_{n}\|_{ql'}^q
\big(\int_{\mathbb{R}^{2}}(e^{l\vartheta\alpha(\ell^2+\kappa) \big( \frac{\vert u_{n}\vert}{\sqrt{\ell^2+\kappa}}\big)^{2}}-1)dx\big)^{1/l}
\leq
C,
\end{align*}
where $l'>1$ is the conjugate exponent of $l$.\\
Hence, putting together \eqref{large7}, \eqref{large8}, \eqref{large9}, and \eqref{large10}, we can obtain
\begin{align*}
d+C\eta+o_{n}(1)\geq c_{V_{\infty}},
\end{align*}
and, due to the arbitrariness of $\eta$, taking the limit as $n\rightarrow\infty$, we conclude.\\
{\bf Case 2:} $\lim\sup_{n}t_{n}=\bar{t}<1$.\\
In this case there exists a subsequence still denoted by $\{t_{n}\}$, such that $t_{n}\rightarrow \bar{t}$ and $t_{n}<1$ for $n$ large enough.\\
Since $\{u_{n}\}$ is a bounded $(PS)_{d}$ sequence  for $J_{\varepsilon}$ in $H_{\varepsilon}$, we have
\begin{equation}\label{large11}
d+o_{n}(1)
=J_{\varepsilon}(u_{n})-\frac{1}{2} J'_{\varepsilon}(u_{n}) [ u_{n}]
=\frac{1}{2}\int_{\mathbb{R}^{2}}\Big(f(\vert u_{n}\vert^{2})\vert
u_{n}\vert^{2}-F(\vert u_{n}\vert^{2})\Big)dx.
\end{equation}
Note that, by (\ref{f3}), the map $t\mapsto f(t)t-F(t)$ is increasing for $t>0$. Hence, this fact combined with $t_{n}\vert u_{n}\vert\in\mathcal{N}_{V_{\infty}}$, $t_{n}<1$, and \eqref{large11}, allows us to get
\begin{align*}
c_{V_{\infty}}&\leq I_{V_{\infty}}(t_{n}\vert u_{n}\vert)
=I_{V_{\infty}}(t_{n}\vert u_{n}\vert)
-\frac{t_{n}}{2} I'_{V_{\infty}}(t_{n}\vert u_{n}\vert) [\vert u_{n}\vert]\\
&=\frac{1}{2}\int_{\mathbb{R}^{2}}\Big(f(t_{n}^{2}\vert u_{n}\vert^{2})t_{n}^{2}\vert u_{n}\vert^{2}-F(t_{n}^{2}\vert u_{n}\vert^{2})\Big)dx\\
&\leq\frac{1}{2}\int_{\mathbb{R}^{2}}\Big(f(\vert u_{n}\vert^{2})\vert u_{n}\vert^{2}-F(\vert u_{n}\vert^{2})\Big)dx\\
&= d+o_{n}(1).
\end{align*}
Passing to the limit as $n\rightarrow +\infty$, we complete the proof.
\end{proof}
We shall also use the next result about the $(PS)$ sequences.
\begin{lemma}\label{gt6888}
Let $\{u_{n}\}\subset H_\varepsilon$ be a (PS)  sequence for $J_{\varepsilon}$ such that   $\ell^2:=\limsup_{n}\Vert u_{n}\Vert_{\varepsilon}^{2}<1$.
Then, up to a subsequence, $\nabla\vert u_{n}\vert\rightarrow \nabla\vert u\vert$ a.e. in $\mathbb{R}^{2}$.
\end{lemma}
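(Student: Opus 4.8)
The plan is to upgrade the weak convergence to an a.e. convergence of the \emph{covariant} gradients $\nabla_{A_\varepsilon}u_n$ on compact sets, and only at the end transfer this to the moduli through the pointwise identity $\nabla|u|=\operatorname{Re}\big(i\bar u\,\nabla_{A_\varepsilon}u\big)/|u|$, valid a.e. on $\{u\neq 0\}$. By Lemma~\ref{PSbdd} the sequence is bounded, so, up to a subsequence, $u_n\rightharpoonup u$ in $H_\varepsilon$, $u_n\to u$ in $L^r_{\rm loc}$ and a.e., and $\nabla_{A_\varepsilon}u_n\rightharpoonup\nabla_{A_\varepsilon}u$ weakly in $L^2$. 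Moreover, since $\|\nabla|u_n|\|_2^2\le\|u_n\|_\varepsilon^2$ by the diamagnetic inequality \eqref{2.1}, we have $\limsup_n\|\nabla|u_n|\|_2^2\le\ell^2<1$. Arguing exactly as in Case~1 of Lemma~\ref{lemFat} (using \eqref{2.8}, \eqref{2.10}, the bound $\ell^2<1$, and the Trudinger--Moser Lemma~\ref{le24}) one checks that $\int f(|u_n|^2)u_n\bar\phi\to\int f(|u|^2)u\bar\phi$ for every $\phi\in C_c^\infty(\mathbb{R}^2,\mathbb{C})$, whence $u$ is a weak solution of \eqref{1.6}, i.e. $J_\varepsilon'(u)=0$.

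Fix $\varphi\in C_c^\infty(\mathbb{R}^2,[0,1])$ with $\varphi\equiv 1$ on a ball $B_R(0)$. The heart of the argument is a local energy identity obtained by testing $J_\varepsilon'(u_n)$ with $u_n\varphi\in H_\varepsilon$. Since $\nabla_{A_\varepsilon}(u_n\varphi)=\varphi\,\nabla_{A_\varepsilon}u_n+\tfrac{1}{i}u_n\nabla\varphi$ and $\operatorname{Re}\big(i\bar u_n\,\nabla_{A_\varepsilon}u_n\big)=\tfrac12\nabla|u_n|^2$, an integration by parts gives
\begin{equation*}
\int_{\mathbb{R}^2}\varphi|\nabla_{A_\varepsilon}u_n|^2\,dx-\frac12\int_{\mathbb{R}^2}|u_n|^2\Delta\varphi\,dx+\int_{\mathbb{R}^2}V_\varepsilon(x)|u_n|^2\varphi\,dx=\int_{\mathbb{R}^2}f(|u_n|^2)|u_n|^2\varphi\,dx+o_n(1),
\end{equation*}
the remainder being $o_n(1)$ because $\{u_n\varphi\}$ is bounded in $H_\varepsilon$ and $J_\varepsilon'(u_n)\to 0$. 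The same computation applied to the weak solution $u$ yields the analogous identity with no remainder.

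Next I let $n\to+\infty$. The terms with $\Delta\varphi$ and $V_\varepsilon(x)$ pass to the limit by the local strong $L^2$ convergence granted by Lemma~\ref{lem21}, while the nonlinear term converges by the same Trudinger--Moser argument as above (as $\varphi$ has compact support, $f(|u_n|^2)|u_n|^2$ is equi-integrable on $\operatorname{supp}\varphi$ and converges a.e.). Comparing the limit of the identity for $u_n$ with the identity for $u$ forces
\begin{equation*}
\lim_n\int_{\mathbb{R}^2}\varphi|\nabla_{A_\varepsilon}u_n|^2\,dx=\int_{\mathbb{R}^2}\varphi|\nabla_{A_\varepsilon}u|^2\,dx.
\end{equation*}
Together with the weak convergence $\varphi^{1/2}\nabla_{A_\varepsilon}u_n\rightharpoonup\varphi^{1/2}\nabla_{A_\varepsilon}u$ in $L^2$, this convergence of norms yields $\nabla_{A_\varepsilon}u_n\to\nabla_{A_\varepsilon}u$ strongly in $L^2(B_R(0))$, hence, up to a further subsequence, a.e. on $B_R(0)$. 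A diagonal argument over $R\in\mathbb{N}$ produces a subsequence with $\nabla_{A_\varepsilon}u_n\to\nabla_{A_\varepsilon}u$ a.e. in $\mathbb{R}^2$.

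Finally I transfer this to the moduli. On $\{u\neq0\}$ we have $i\bar u_n/|u_n|\to i\bar u/|u|$ a.e. (since $u_n\to u$ a.e. and $|u|\neq0$), so $\nabla|u_n|=\operatorname{Re}\big(i\bar u_n\,\nabla_{A_\varepsilon}u_n\big)/|u_n|\to\operatorname{Re}\big(i\bar u\,\nabla_{A_\varepsilon}u\big)/|u|=\nabla|u|$ a.e. On $\{u=0\}$ one has $\nabla|u|=0$ a.e. and, since $\nabla_{A_\varepsilon}u=\tfrac1i\nabla u-A_\varepsilon u$ vanishes a.e. there (because $\nabla u=0$ a.e. on $\{u=0\}$), the diamagnetic inequality \eqref{2.1} gives $|\nabla|u_n||\le|\nabla_{A_\varepsilon}u_n|\to|\nabla_{A_\varepsilon}u|=0$ a.e., so $\nabla|u_n|\to 0=\nabla|u|$ a.e. as well, which proves the claim. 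The main obstacle is the passage to the limit in the nonlinear term, and this is precisely where the hypothesis $\ell^2<1$ is indispensable: it keeps $\|\nabla|u_n|\|_2^2$ below the Trudinger--Moser threshold and thus makes $f(|u_n|^2)|u_n|^2$ locally equi-integrable; a secondary technical point is the degenerate set $\{u=0\}$, which is handled above through the diamagnetic inequality.
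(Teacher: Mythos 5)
Your proof is correct, but the core of it runs along a genuinely different track than the paper's. Both arguments share the same skeleton: boundedness via Lemma \ref{PSbdd}, extraction of a weak limit $u$ with $J_\varepsilon'(u)=0$ (using the Trudinger--Moser machinery of Lemma \ref{le24}, made available by $\ell^2<1$ through the diamagnetic inequality \eqref{2.1}), then local strong $L^2$ convergence of the covariant gradients, then a.e.\ convergence. The difference is in how the local strong convergence is obtained. The paper tests $J_\varepsilon'(u_n)$ and $J_\varepsilon'(u)$ against $(u_n-u)\psi$ and directly expands $\int|\nabla_{A_\varepsilon}(u_n-u)|^2\psi\,dx$, which forces it to estimate three cross terms $K_{1,n},K_{2,n},K_{3,n}$ (see \eqref{weak101}) by H\"older, the local compactness of Lemma \ref{lem21}, and exponential bounds of the type \eqref{bddun}; the analogue of your remainder estimate appears there as \eqref{more2}. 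You instead test against $u_n\varphi$ and $u\varphi$, use the pointwise identity $\operatorname{Re}(i\bar u_n\nabla_{A_\varepsilon}u_n)=\tfrac12\nabla|u_n|^2$ plus integration by parts to turn the cross term into $-\tfrac12\int|u_n|^2\Delta\varphi\,dx$, pass to the limit in the resulting local energy identity (Vitali for the nonlinear term, which is where $\ell^2<1$ enters, exactly as in the paper), and then invoke weak convergence plus convergence of the weighted norms to upgrade to strong convergence. What your route buys is that all nonlinear terms involve only $|u_n|$ itself rather than products with $|u_n-u|$, so a single equi-integrability statement replaces the paper's three separate cross-term estimates; what it costs is the need for the distributional identity and the nonnegativity of the weight $\varphi$. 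Finally, your last step is actually more complete than the paper's: the paper deduces $\nabla u_n\to\nabla u$ a.e.\ and then simply states ``we can conclude,'' whereas you make the passage to $\nabla|u_n|\to\nabla|u|$ explicit via $\nabla|u|=\operatorname{Re}(i\bar u\,\nabla_{A_\varepsilon}u)/|u|$ on $\{u\neq0\}$ and handle the degenerate set $\{u=0\}$ through the diamagnetic inequality, which is a worthwhile clarification.
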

\begin{proof}
Let $\{u_{n}\}\subset H_\varepsilon$ be a (PS)  sequence for $J_{\varepsilon}$ with $\limsup_{n}\Vert u_{n}\Vert_{\varepsilon}^{2}<1$. By Lemma \ref{PSbdd} we have that $\{u_{n}\}$ is bounded in $H_{\varepsilon}$ and so, up to a subsequence, $u_{n}\rightharpoonup u$ in $H_{\varepsilon}$, $u_{n}\rightarrow u$ in $L^{r}_{\rm loc}(\mathbb{R}^{2}, \mathbb{C})$ for any $r\geq 1$ and $u_{n}\rightarrow u$ a.e. in $\mathbb{R}^{2}$. Since the norm is weakly lower semicontinuous, it follows that $\Vert u\Vert_{\varepsilon}^{2}<1$.\\
Now observe that, for every $\phi\in C_{c}^{\infty}(\mathbb{R}^{2},\mathbb{C})$,
\[
f(\vert u_{n}\vert^{2})u_{n}\overline{\phi } \rightarrow  f(\vert u\vert^{2})u\overline{\phi}\,
\text{ a.e. in } \,\mathbb{R}^2\,
\text{ as }\, n\to+\infty
\]
and, by \eqref{2.8}, fixed $q>2$, for any $\zeta>0$ and $\alpha>4\pi$, there exists $C>0$ such that, for every $\phi\in C_{c}^{\infty}(\mathbb{R}^{2},\mathbb{R})$,
\[
|{\rm Re}[f(|u_{n}^{2}|)u_{n}\overline{\phi}] |
\leq
|f(|u_{n}^{2}|)u_{n}\overline{\phi} |
\leq
\zeta | u_{n}| | \phi|
+C | u_{n}|^{q-1} (e^{\alpha |u_{n}|^{2}}-1)| \phi|
\]
with
\[
\zeta | u_{n}| | \phi|
+C | u_{n}|^{q-1}(e^{\alpha |u_{n}|^{2}}-1)| \phi|
\to
\zeta | u| | \phi|
+C | u|^{q-1} (e^{\alpha |u|^{2}}-1)| \phi|
\text{ a.e. in } \mathbb{R}^2 \text{ as }n\to+\infty.
\]
Moreover, let $r^*,\tau>1$ such that $r^*\tau\ell^2<1$. Using the H\"older inequality, \eqref{ineqe}, Sobolev inequality, Lemma \ref{le24}, and taking
\[
\kappa\in\Big(0,\frac{1}{r^*\tau}-\ell^2\Big)
\text{ and }
\alpha\in\Big(4\pi,\frac{4\pi}{r^*\tau (\ell^2+\kappa)}\Big),
\]
we have that, for $n$ large enough, such that $\|\nabla|u_n|\|_2^2\leq \ell^2+\kappa$,
\begin{equation}
	\label{bddun}
	\begin{split}
		\int_{\mathbb{R}^{2}}\Big[\vert u_{n}\vert^{q-1}(e^{\alpha |u_{n}|^{2}}-1)\Big]^{r^*} dx
		&\leq
		\|u_n\|_{\tau' r^*(q-1)}^{r^*(q-1)}
		\Big( \int_{\mathbb{R}^{2}} (e^{\alpha r^* \tau |u_{n}|^{2}}-1) dx\Big)^{1/\tau}\\
		&\leq
			C
			\|u_n\|_{\varepsilon}^{r^*(q-1)}
			\Big( \int_{\mathbb{R}^{2}} (e^{\alpha r^* \tau (\ell^2+\kappa) \Big(\frac{\vert u_{n}\vert}{\sqrt{\ell^2+\kappa}}\Big)^{2}}-1) dx\Big)^{1/\tau}
			\leq
			C,
	\end{split}
\end{equation}
where $\tau'$ is the conjugate exponent of $\tau$.\\
Then, up to a subsequence and applying the Severini-Egoroff Theorem, we get
\[
\vert u_{n}\vert^{q-1}(e^{\alpha |u_{n}|^{2}}-1)
\rightharpoonup
\vert u\vert^{q-1}(e^{\alpha |u|^{2}}-1)
\text{ in } L^{r}(\mathbb{R}^{2}, \mathbb{R}).
\]
Thus, since, up to a subsequence, also $\vert u_{n}\vert \rightharpoonup \vert u\vert$ in $L^{2}(\mathbb{R}^{2}, \mathbb{R})$, we have, as $n\to +\infty$,
\[
\zeta\int_{\mathbb{R}^{2}}\vert u_{n}\vert \vert \phi\vert dx
+C\int_{\mathbb{R}^{2}}\vert u_{n}\vert^{q-1}(e^{\alpha |u_{n}|^{2}}-1)\vert \phi\vert dx
\to
\zeta\int_{\mathbb{R}^{2}}\vert u \vert \vert \phi\vert dx
+C\int_{\mathbb{R}^{2}}\vert u \vert^{q-1}(e^{\alpha |u|^{2}}-1)\vert \phi\vert dx.
\]
Hence, by a variant of the Lebesgue Dominated Convergence Theorem,
we obtain that, for every $\phi\in C_{c}^{\infty}(\mathbb{R}^{2},\mathbb{C})$
$$
\text{Re}\int_{\mathbb{R}^{2}}f(\vert u_{n}\vert^{2})u_{n}\overline{\phi }dx\rightarrow \text{Re}\int_{\mathbb{R}^{2}}f(\vert u\vert^{2})u\overline{\phi} dx,
$$
from which we get $J'_{\varepsilon}(u)=0$.\\
Let now $R>0$ be arbitrary and $\psi$ be a $C_c^{\infty}(\mathbb{R}^2,[0,1])$ function such that
$\psi\equiv 1$ in $B_{R}(0)$ and $\psi\equiv 0$ in $B_{2R}^c(0)$.\\
First we observe that, since for every $v\in H_\varepsilon$,
\[
\nabla_{A_\varepsilon}(v\psi)=\psi\nabla_{A_\varepsilon}v-iv\nabla \psi,
\]
and then, by the H\"older inequality,
\[
\|\nabla_{A_\varepsilon}(v\psi)\|_2^2
=
\|\psi\nabla_{A_\varepsilon}v\|_2^2
+2\text{Re}\int_{\mathbb{R}^2}i\overline{v}\psi \nabla_{A_\varepsilon}v\nabla \psi dx
+\||v| |\nabla\psi|\|_2^2
\leq
C(\|\nabla_{A_\varepsilon}v\|_2^2 + \|v\|_2^2),
\]
we have that $v\psi \in H_\varepsilon$ and that the boundedness of $\{u_n\}$ in $H_\varepsilon$ implies the boundedness of $\{u_n\psi\}$ in $H_\varepsilon$.\\
By \eqref{2.8}, the H\"{o}lder and diamagnetic inequalities, \eqref{ineqe}, Lemma \ref{lem21}, and Lemma \ref{le24}, for  $q>2$ and $\alpha>4\pi$,
\begin{equation*}
\begin{split}
\left|\text{Re}\int_{\mathbb{R}^{2}}f(\vert u\vert^{2})u\overline{ (u-u_{n})}\psi dx\right|
&\leq
\int_{\mathbb{R}^{2}}f(\vert u\vert^{2})|u| |u-u_{n}|\psi dx\\
&\leq
\zeta \int_{\mathbb{R}^{2}}|u| |u-u_{n}|\psi dx\\
&\qquad
+ C \int_{\mathbb{R}^{2}} |u|^{q-1}  (e^{\alpha |u|^2}-1) |u-u_{n}|\psi dx\\
&\leq
\zeta \|u\|_2 \left(\int_{B_{2R}(0)} |u-u_{n}|^2 dx\right)^{\frac{1}{2}}\\
&\qquad
+ C \left(\int_{\mathbb{R}^{2}} |u|^{2(q-1)}  (e^{2\alpha |u|^2}-1) dx\right)^{\frac{1}{2}}
\left(\int_{B_{2R}(0)} |u-u_{n}|^{2} dx\right)^{\frac{1}{2}}\\
&\leq
o_n(1)
+ C \|u\|_{4(q-1)}^{q-1}
\left(\int_{\mathbb{R}^{2}}(e^{4\alpha |u|^2}-1) dx\right)^{\frac{1}{4}}
o_n(1)\\
&= o_n(1).
\end{split}
\end{equation*}
Moreover, by Lemma \ref{lem21} and the H\"older inequality,
\[
\left|\text{Re}\int_{\mathbb{R}^{2}}V_\varepsilon(x)u\overline{ (u-u_{n})}\psi dx\right|
\leq
C \| u\|_2 \left(\int_{B_{2R}(0)} |u-u_{n}|^{2} dx\right)^{\frac{1}{2}}
=o_n(1)
\]
and
\[
\left|\text{Re}\int_{\mathbb{R}^{2}}i \overline{ (u-u_{n})}\nabla_{A_\varepsilon} u \nabla \psi  dx\right|
\leq
C \| \nabla_{A_\varepsilon} u\|_2 \left(\int_{B_{2R}(0)\setminus B_R(0)} |u-u_{n}|^{2} dx\right)^{\frac{1}{2}}
=o_n(1).
\]
Thus
\begin{equation}
\label{more2}
\begin{split}
\text{Re}\int_{\mathbb{R}^{2}} \nabla_{A_\varepsilon} u\overline{\nabla_{A_\varepsilon} (u-u_{n})}\psi dx
&=
J'_{\varepsilon}(u) [ (u-u_{n})\psi]
-\text{Re}\int_{\mathbb{R}^{2}}V_\varepsilon(x)u\overline{ (u-u_{n})}\psi dx\\
&\quad
-\text{Re}\int_{\mathbb{R}^{2}}i \overline{ (u-u_{n})}\nabla_{A_\varepsilon} u \nabla \psi  dx
+\text{Re}\int_{\mathbb{R}^{2}}f(\vert u\vert^{2})u\overline{ (u-u_{n})}\psi dx\\
&=o_{n}(1).
\end{split}
\end{equation}
Now, using \eqref{more2}, $J'_{\varepsilon}(u_{n})\rightarrow 0$ as $n\rightarrow +\infty$, and the boundedness of the sequence $\{u_{n}\psi\}$ in $H_\varepsilon$,  we obtain
\begin{equation}\label{weak101}
		\begin{split}
			0\leq
			\int_{B_{R}(0)} | \nabla_{A_\varepsilon} u_{n}-  \nabla_{A_\varepsilon} u|^2 dx
			&\leq
			\int_{\mathbb{R}^{2}} | \nabla_{A_\varepsilon} u_{n}-  \nabla_{A_\varepsilon} u|^2\psi dx\\
			&=
			\text{Re}\int_{\mathbb{R}^{2}} \nabla_{A_\varepsilon} u_{n}\overline{\nabla_{A_\varepsilon} (u_n-u)}\psi dx
			+o_{n}(1)\\
			&= J'_{\varepsilon}(u_{n}) [ (u_{n}-u)\psi]
			+K_{1, n}+K_{2, n}+K_{3, n}+o_{n}(1)\\
			&= K_{1, n}+K_{2, n}+K_{3, n}+o_{n}(1),
		\end{split}
	\end{equation}
where
\begin{align*}
K_{1, n}
&=
\text{Re}\int_{\mathbb{R}^{2}}i \overline{( u-u_n)}\nabla_{A_\varepsilon} u_{n}\nabla \psi dx,\\
K_{2, n}
&=
\text{Re}\int_{\mathbb{R}^{2}} V_\varepsilon (x) u_{n}\overline{( u-u_n)} \psi dx,\\
K_{3, n}
&=
\text{Re}\int_{\mathbb{R}^{2}}f(\vert u_{n}\vert^{2})u_{n}\overline{( u_n-u)} \psi dx.
\end{align*}
By Lemma \ref{lem21}, the H\"{o}lder inequality, and the boundedness of $\{u_n\}$ in $H_\varepsilon$, we get
\begin{equation*}
\vert K_{1, n}\vert
\leq
\int_{B_{2R}(0)\backslash B_{R}(0)} \vert \nabla_{A_\varepsilon} u_{n}\vert \vert  u-u_{n}\vert \vert \nabla \psi \vert dx
\leq C \Vert u_{n}\Vert_{\varepsilon} \Big(\int_{B_{2R}(0)\backslash B_{R}(0)} \vert  u-u_{n}\vert^{2}dx\Big)^{1/2}=o_{n}(1)
\end{equation*}
and
\begin{equation*}
|K_{2, n}|
\leq
\int_{B_{2R}(0)} V_\varepsilon (x) |u_{n}| |u-u_n| dx
\leq
C \| u_n\|_2 \left(\int_{B_{2R}(0)} |u-u_{n}|^{2} dx\right)^{1/2}
=o_n(1).
\end{equation*}
Moreover, by \eqref{2.8}, the H\"{o}lder inequality and the boundedness of $\{u_n\}$ in $H_\varepsilon$, for $q>2$ and suitable $\tau>1$ and $\alpha>4\pi$, if $\tau'$ is the conjugate exponent of $\tau$,
	\begin{equation*}
		\begin{split}
			|K_{3, n}|
			&\leq
			\int_{B_{2R}(0)}|f(\vert u_{n}\vert^{2})| |u_{n}| |u_{n}-u| dx\\
			&\leq
			\zeta \| u_{n}\|_2 \left(\int_{B_{2R}(0)} \vert u_{n}-u\vert^2 dx\right)^{1/2}
			+C \int_{B_{2R}(0)}\vert u_{n}-u\vert \vert u_{n}\vert^{q-1}(e^{\alpha\vert u_{n}\vert^{2} }-1) dx\\
			&\leq
			o_n(1)
			+C\left(\int_{B_{2R}(0)} \vert u_{n}-u\vert^{\tau'} dx\right)^{1/\tau'}
			\left(\int_{\mathbb{R}^{2}}\Big[\vert u_{n}\vert^{q-1}(e^{\alpha |u_{n}|^{2}}-1)\Big]^{\tau} dx\right)^{1/\tau}
			\\
			&=
			o_n(1),
		\end{split}
	\end{equation*}
since, arguing as in \eqref{bddun}, for $n$ large enough,
\[
\int_{\mathbb{R}^{2}}\Big[\vert u_{n}\vert^{q-1}(e^{\alpha |u_{n}|^{2}}-1)\Big]^{\tau} dx
\leq C.
\]
Hence, by \eqref{weak101}, we obtain that, for any $R>0$,
$$
\int_{B_{R}(0)} | \nabla_{A_\varepsilon} u_{n}-  \nabla_{A_\varepsilon} u|^2 dx\rightarrow 0,\,\,\text{as}\,\,n\rightarrow +\infty,
$$
which implies that $\nabla_{A_\varepsilon} u_{n} \to  \nabla_{A_\varepsilon} u$ a.e. in $\mathbb{R}^2$. Thus, using also that $u_n \to u$ a.e. in $\mathbb{R}^2$, we have
\begin{equation*}
\nabla u_{n} \to  \nabla u \text{ a.e. in } \mathbb{R}^2
\end{equation*}
and we can conclude.
\end{proof}

Now we provide a range of levels in which the functional $J_{\varepsilon}$ satisfies the $(PS)$ condition.
\begin{proposition}\label{gt688}
Let $c\in \mathbb{R}$ and $\{u_{n}\}$ be a $(PS)_{c}$ sequence for $J_{\varepsilon}$ such that $\lim\sup_{n\rightarrow\infty}\Vert u_{n}\Vert_{\varepsilon}^2<1/8$.
Assume that $c< c_{V_{\infty}}$ if $V_{\infty}<+\infty$ or $c$ is arbitrary if $V_{\infty}=+\infty$. Then, $\{u_{n}\}$ admits a convergent subsequence in $H_{\varepsilon}$.
\end{proposition}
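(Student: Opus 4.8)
The plan is to combine the Brezis--Lieb type splitting of Lemma \ref{2.0} with the compactness information already isolated in Lemmas \ref{gt6888} and \ref{compactness}, reducing strong convergence to the analysis of the possibly vanishing part of the sequence.

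First I would record the preliminary facts. By Lemma \ref{PSbdd} the sequence $\{u_n\}$ is bounded in $H_\varepsilon$, so, up to a subsequence, $u_n\rightharpoonup u$ in $H_\varepsilon$, $u_n\to u$ in $L^r_{\rm loc}$ and a.e.; by Lemma \ref{gt6888} (applicable since $\limsup_n\|u_n\|_\varepsilon^2<1/8<1$) we may also assume $\nabla|u_n|\to\nabla|u|$ a.e., and its proof yields that $u$ is a critical point of $J_\varepsilon$, i.e. $J'_\varepsilon(u)=0$. From $J'_\varepsilon(u)[u]=0$ and (\ref{f3}) one gets $J_\varepsilon(u)=\frac12\int_{\mathbb{R}^2}(f(|u|^2)|u|^2-F(|u|^2))\,dx\ge0$. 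Moreover, by the diamagnetic inequality \eqref{2.1}, $\limsup_n\|\nabla|u_n|\|_2^2\le\limsup_n\|u_n\|_\varepsilon^2<1/8$, so both \eqref{apprF} (needing $\ell^2<1/4$) and \eqref{apprf} (needing $\ell^2<1/8$) of Lemma \ref{2.0} are available along a further subsequence; this is exactly where the hypothesis $\limsup_n\|u_n\|_\varepsilon^2<1/8$ enters.

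Next I would introduce the truncations $\hat u_j=\varphi_j u$ and set $w_j:=u_{n_j}-\hat u_j$, the idea being that $w_j$ carries the possible loss of compactness. Since $\hat u_j\to u$ in $H_\varepsilon$ (Lemma \ref{gt521}) and $u_{n_j}\rightharpoonup u$, we have $w_j\rightharpoonup0$; the quadratic splitting $\|u_{n_j}\|_\varepsilon^2=\|w_j\|_\varepsilon^2+\|\hat u_j\|_\varepsilon^2+o_j(1)$ (the cross term vanishing because $w_j\rightharpoonup0$ and $\hat u_j\to u$ strongly) gives $\limsup_j\|\nabla|w_j|\|_2^2\le\limsup_j\|w_j\|_\varepsilon^2\le\limsup_j\|u_{n_j}\|_\varepsilon^2<1/8<1$. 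Using \eqref{apprF} together with the continuity of $J_\varepsilon$ I would deduce $J_\varepsilon(w_j)\to c-J_\varepsilon(u)=:d$, and using \eqref{apprf} (an $L^2$ estimate on the remainder $R_j:=f(|u_{n_j}|^2)u_{n_j}-f(|w_j|^2)w_j-f(|\hat u_j|^2)\hat u_j$, so that $|\operatorname{Re}\int_{\mathbb{R}^2}R_j\overline\phi\,dx|\le\|R_j\|_2\|\phi\|_2=o_j(1)$ uniformly for $\|\phi\|_\varepsilon\le1$), combined with $J'_\varepsilon(\hat u_j)\to J'_\varepsilon(u)=0$ and $J'_\varepsilon(u_{n_j})\to0$ in $H_\varepsilon^{*}$, I would obtain $J'_\varepsilon(w_j)\to0$ in $H_\varepsilon^{*}$. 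Hence $\{w_j\}$ is a $(PS)_d$ sequence with $w_j\rightharpoonup0$ and $\limsup_j\|\nabla|w_j|\|_2^2<1$.

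Finally I would close the argument according to the two cases. If $V_\infty<+\infty$, then since $J_\varepsilon(u)\ge0$ we have $d=c-J_\varepsilon(u)\le c<c_{V_\infty}$, so Lemma \ref{compactness} applied to $\{w_j\}$ forbids $w_j\not\to0$ and therefore $w_j\to0$ in $H_\varepsilon$; consequently $u_{n_j}=w_j+\hat u_j\to u$. If $V_\infty=+\infty$, I would not need the splitting at all: by Lemma \ref{lem21} the embedding $H_\varepsilon\hookrightarrow L^r(\mathbb{R}^2,\mathbb{C})$ is compact for every $r\ge2$, so $u_n\to u$ in $L^r$; writing $\|u_n-u\|_\varepsilon^2=J'_\varepsilon(u_n)[u_n-u]-J'_\varepsilon(u)[u_n-u]+\operatorname{Re}\int_{\mathbb{R}^2}(f(|u_n|^2)u_n-f(|u|^2)u)\overline{(u_n-u)}\,dx$ and estimating the nonlinear term by Cauchy--Schwarz (the factors $\|f(|u_n|^2)u_n\|_2$ and $\|f(|u|^2)u\|_2$ being bounded via \eqref{2.8}, the H\"older inequality and the Trudinger--Moser Lemma \ref{le24}, exactly as in \eqref{bddun}, while $\|u_n-u\|_2\to0$) shows $\|u_n-u\|_\varepsilon\to0$. \textbf{The main obstacle} is the loss-of-compactness bookkeeping of the previous paragraph: proving that $w_j$ is a genuine $(PS)_d$ sequence requires the simultaneous $n_j,j\to\infty$ control of the exponential nonlinearity provided by Lemma \ref{2.0}, and it is precisely the $L^2$ (rather than $L^1$) form of \eqref{apprf}, legitimate only under $\ell^2<1/8$, that lets one test against an arbitrary $\phi\in H_\varepsilon$ and conclude $J'_\varepsilon(w_j)\to0$ in the dual norm.
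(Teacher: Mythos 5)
Your proof is correct. For the case $V_{\infty}<+\infty$ it follows the paper's argument essentially verbatim: the same decomposition $w_j=u_{n_j}-\hat u_j$ with the truncations of Lemma \ref{gt521}, the same use of \eqref{apprF} and of the $L^2$ estimate \eqref{apprf} of Lemma \ref{2.0} to show that $\{w_j\}$ is a $(PS)_{c-J_\varepsilon(u)}$ sequence with $w_j\rightharpoonup 0$, and the same appeal to Lemma \ref{compactness} (whose hypothesis $\limsup_j\|\nabla|w_j|\|_2^2<1$ you verify with the sharper bound $<1/8$ coming from the quadratic splitting, whereas the paper settles for the cruder estimate $\limsup_j\|w_j\|_\varepsilon^2\le 2\limsup_j\|u_{n_j}\|_\varepsilon^2+2\limsup_j\|\hat u_j\|_\varepsilon^2<1/2$; both suffice). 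Where you genuinely deviate is the case $V_\infty=+\infty$: the paper still routes this case through the splitting, obtaining $v_j\to 0$ in $L^r$ from Lemmas \ref{gt521} and \ref{lem21} and then killing $\|v_j\|_\varepsilon^2=\int_{\mathbb{R}^2}f(|v_j|^2)|v_j|^2dx+o_j(1)$ by Trudinger--Moser, while you dispense with the splitting altogether and work directly on $\{u_n\}$: the compact embedding of Lemma \ref{lem21} gives $u_n\to u$ in $L^2$, and the identity $\|u_n-u\|_\varepsilon^2=J'_\varepsilon(u_n)[u_n-u]-J'_\varepsilon(u)[u_n-u]+\operatorname{Re}\int_{\mathbb{R}^2}(f(|u_n|^2)u_n-f(|u|^2)u)\overline{(u_n-u)}\,dx$ closes the argument, the $L^2$ bound on $f(|u_n|^2)u_n$ being legitimate exactly as in \eqref{bddun} because $\limsup_n\|\nabla|u_n|\|_2^2<1/8$ leaves ample room in the Trudinger--Moser exponents. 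Your version of this sub-case is shorter and more self-contained (it does not need \eqref{weak4}--\eqref{weak5} there), at the modest cost of a separate bookkeeping of the nonlinear term; the paper's version keeps a single unified pipeline for both sub-cases.
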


\begin{proof}
Since the sequence $\{u_{n}\}$ is bounded in $H_{\varepsilon}$, up to a subsequence, $u_{n}\rightharpoonup u$ in $H_{\varepsilon}$ and $u_{n}\rightarrow u$ in $L^{r}_{\rm loc}(\mathbb{R}^{2}, \mathbb{C})$ for any $r\geq 1$. Moreover, arguing as in Lemma \ref{gt6888}, we also have  that, for any $\phi\in H_{\varepsilon}$,
\begin{equation*}
\text{Re}\int_{\mathbb{R}^{2}}f(\vert u_{n}\vert^{2})u_{n}\overline{\phi}dx\rightarrow \text{Re}\int_{\mathbb{R}^{2}}f(\vert u\vert^{2})u\overline{\phi}dx
\text{ as } n\rightarrow +\infty.
\end{equation*}
Thus $u$ is a critical point of $J_{\varepsilon}$ and, using (\ref{f3}),
\begin{equation}\label{weak3}
J_{\varepsilon}(u)
=
J_{\varepsilon}(u)-\frac{1}{\theta} J'_{\varepsilon}(u) [u]
= \frac{\theta-2}{2\theta} \|u\|_\varepsilon^2
+ \frac{1}{\theta}\int_{\mathbb{R}^{2}}\Big(f(\vert u\vert^{2})\vert u\vert^{2}-\frac{\theta}{2}F(\vert u\vert^{2})\Big)dx\geq 0.
\end{equation}
Now, let $\{u_{n_j}\}$ be a subsequence of $\{u_n\}$ satisfying the assumptions of Lemma \ref{gt6888} and Lemma \ref{2.0} and let  $v_{j}:=u_{n_{j}}-\hat{u}_{j}$ where $\hat{u}_{j}=\varphi_j u$, with $\varphi_{j}(x)=\varphi(2x/j)$, $\varphi\in C_{c}^{\infty}(\mathbb{R}^{2}, \mathbb{R})$, $0\leq \varphi\leq 1$,
$\varphi(x)=1$ if $\vert x\vert\leq 1$, and $\varphi(x)=0$ if $\vert x\vert\geq 2$. We claim that
\begin{equation}\label{weak4}
J_{\varepsilon}(v_{j})=c-J_{\varepsilon}(u)+o_{j}(1),
\end{equation}
and
\begin{equation}\label{weak5}
J'_{\varepsilon}(v_{j})=o_{j}(1).
\end{equation}
In order to prove \eqref{weak4}, let us observe that
\[
J_{\varepsilon}(v_{j})-J_{\varepsilon}(u_{n_{j}})+J_{\varepsilon}(\hat{u}_{j})
=
\Vert \hat{u}_{j}\Vert_{\varepsilon}^{2}-\langle u_{n_{j}}, \hat{u}_{j}\rangle_{\varepsilon}
+\frac{1}{2}\int_{\mathbb{R}^{2}}( F(\vert u_{n_{j}}\vert^{2})-F(\vert v_{j}\vert^{2})-F(\vert \hat{u}_{j}\vert^{2})) dx.
\]
In view of the weak convergence of $\{u_{n_{j}}\}$ to $u$ in $H_{\varepsilon}$  and using Lemma \ref{gt521}, we can see that
\[
\Vert \hat{u}_{j}\Vert_{\varepsilon}^{2}-\langle u_{n_{j}}, \hat{u}_{j}\rangle_{\varepsilon}
= o_j(1).
\]
Moreover, by Lemma \ref{gt6888} and \eqref{apprF} in Lemma \ref{2.0}, we have that
\[
\int_{\mathbb{R}^{2}}( F(\vert u_{n_{j}}\vert^{2})-F(\vert v_{j}\vert^{2})-F(\vert \hat{u}_{j}\vert^{2})) dx
= o_j(1),
\]
getting \eqref{weak4}.\\
To prove \eqref{weak5}, we observe that, for any $\phi\in H_{\varepsilon}$,
\begin{align*}
\vert  (J'_{\varepsilon}(v_{j}) -J'_{\varepsilon}(u_{n_{j}}) +J'_{\varepsilon}(\hat{u}_{j}))[\phi] \vert&=\Big\vert\text{Re}\int_{\mathbb{R}^{2}}(f(\vert
u_{n_{j}}\vert^{2})u_{n_{j}}-f(\vert v_{j}\vert^{2})v_{j}-f(\vert \hat{u}_{j}\vert^{2})\hat{u}_{j})\overline{\phi} dx\Big\vert \\
&\leq \int_{\mathbb{R}^{2}}\vert f(\vert u_{n_{j}}\vert^{2})u_{n_{j}}-f(\vert v_{j}\vert^{2})v_{j}-f(\vert \hat{u}_{j}\vert^{2})\hat{u}_{j}\vert \vert \phi\vert
dx
\end{align*}
and so, by Lemma \ref{gt6888} and \eqref{apprf} in Lemma \ref{2.0}, applying the diamagnetic inequality \eqref{2.1}, we have that
\[
\sup_{\Vert \phi\Vert_{\varepsilon}\leq 1}
\vert ( J'_{\varepsilon}(v_{j})-J'_{\varepsilon}(u_{n_{j}})+J'_{\varepsilon}(\hat{u}_{j})) [\phi] \vert\rightarrow 0.
\]
Thus, since $J'_{\varepsilon}(u_{n_{j}})\rightarrow 0$
and $J'_{\varepsilon}(\hat{u}_{j})\rightarrow J'_{\varepsilon}(u)=0$, \eqref{weak5} holds.\\
Now, if $V_{\infty}<+\infty$ and $c<c_{V_{\infty}}$, by \eqref{weak3}, we have that
\begin{equation}
\label{levels}
c-J_{\varepsilon}(u)\leq c<c_{V_{\infty}}.
\end{equation}
Moreover, by \eqref{weak4} and \eqref{weak5}, $\{v_{j}\}$ is a $(PS)_{c-J_{\varepsilon}(u)}$ sequence for $J_{\varepsilon}$ with $v_{j}\rightharpoonup 0$ in $H_{\varepsilon}$.\\
Note also that, due to the weakly lower semicontinuity of the norm,
\[
\| u\|_\varepsilon^2 \leq \liminf_j \| u_{n_j} \|_\varepsilon^2.
\]
Then, using also the diamagnetic inequality and Lemma \ref{gt521},
\begin{equation}\label{newineqality}
\limsup_{j}(\| \nabla |v_j| \|_2^2 + V_0 \| v_j\|_2^2)
\leq
\limsup_{j}\Vert u_{n_{j}}-\hat{u}_{j}\Vert_{\varepsilon}^{2}\\
\leq
2 (\limsup_{j}\Vert u_{n_{j}}\Vert_{\varepsilon}^{2}+\limsup_{j}\Vert\hat{u}_{j}\Vert_{\varepsilon}^{2})
<\frac{1}{2}.
\end{equation}
Hence, \eqref{levels} and Lemma \ref{compactness} imply that $v_{j}\rightarrow 0$ in $H_{\varepsilon}$ and so, by Lemma \ref{gt521}, we get that $u_{n_{j}}\rightarrow u$ in $H_{\varepsilon}$ as $j\rightarrow +\infty$.\\
If $V_{\infty}=+\infty$, then, by Lemma \ref{gt521} and Lemma \ref{lem21}, $v_{j}\rightarrow 0$ in $L^{r}(\mathbb{R}^{2}, \mathbb{C})$ for any $r\geq 2$. Arguing as in \eqref{newineqality}, we also have
$$\limsup_{j}(\| \nabla |v_j| \|_2^2 + V_0 \| v_j\|_2^2)< 1/2<1$$
and so, by \eqref{2.10}, the H\"older inequality, \eqref{ineqe}, and Lemma \ref{le24},  we deduce that, fixed $q>2$, for suitable $\tau>1$ and $\alpha>4\pi$ such that $\alpha \tau < 8\pi$, for $j$ large enough,
\begin{equation}
\label{vj}
\begin{split}
\Vert v_{j}\Vert_{\varepsilon}^{2}
&=\int_{\mathbb{R}^{2}}f(\vert v_{j}\vert^{2})\vert v_{j}\vert^{2} dx+o_{j}(1)
\leq C \int_{\mathbb{R}^{2}} |v_j|^q (e^{\alpha |v_j|^2}-1) dx +o_{j}(1)\\
&\leq C \|v_j\|_{q\tau'}^q \Big(\int_{\mathbb{R}^{2}} (e^{\alpha |v_j|^2}-1)^{\tau} dx\Big)^{1/\tau} +o_{j}(1)\\
&\leq
	C \|v_j\|_{q\tau'}^q \Big(\int_{\mathbb{R}^{2}} \Big(e^{\frac{\alpha \tau}{2} (\sqrt{2}|v_j|)^2}-1\Big) dx\Big)^{1/\tau} +o_{j}(1)
	=o_{j}(1),
\end{split}
\end{equation}
where $\tau'$ is the conjugate exponent of $\tau$.\\
Hence $u_{n_{j}}\rightarrow u$ in $H_{\varepsilon}$ as $j\rightarrow +\infty$ and the proof is completed.
\end{proof}

To show that  $\mathcal{N}_{\varepsilon}$ is a natural constraint, namely that the constrained critical points of the functional $J_{\varepsilon}$ on
$\mathcal{N}_{\varepsilon}$ are critical points of $J_{\varepsilon}$ in  $H_{\varepsilon}$, we prove the following preliminary result.

\begin{proposition}\label{gt621}
Let $c\in\mathbb{R}$ and $\{u_{n}\}$ be a $(PS)_{c}$ sequence for $J_{\varepsilon}$ restricted to $\mathcal{N}_{\varepsilon}$ with $\limsup_{n\rightarrow\infty}\Vert u_{n}\Vert_{\varepsilon}^2<1/8$. Assume that $c< c_{V_{\infty}}$ if $V_{\infty}<+\infty$ or $c$ is arbitrary if $V_{\infty}=+\infty$. Then, $\{u_{n}\}$ has a convergent subsequence in  $\mathcal{N}_{\varepsilon}$.
\end{proposition}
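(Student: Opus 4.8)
The plan is to show that $\mathcal{N}_{\varepsilon}$ is a natural constraint via the Lagrange multiplier rule, thereby converting a constrained $(PS)_{c}$ sequence into a free one, and then to invoke Proposition \ref{gt688}. Write $G_{\varepsilon}(u):=J'_{\varepsilon}(u)[u]=\|u\|_{\varepsilon}^{2}-\int_{\mathbb{R}^{2}}f(|u|^{2})|u|^{2}\,dx$, so that $\mathcal{N}_{\varepsilon}=G_{\varepsilon}^{-1}(0)\setminus\{0\}$. A direct computation gives, for every $u\in H_{\varepsilon}$,
\[
G'_{\varepsilon}(u)[u]=2\|u\|_{\varepsilon}^{2}-2\int_{\mathbb{R}^{2}}\big(f'(|u|^{2})|u|^{4}+f(|u|^{2})|u|^{2}\big)\,dx,
\]
and hence, on $\mathcal{N}_{\varepsilon}$, using $\|u\|_{\varepsilon}^{2}=\int_{\mathbb{R}^2}f(|u|^{2})|u|^{2}$,
\[
G'_{\varepsilon}(u)[u]=-2\int_{\mathbb{R}^{2}}f'(|u|^{2})|u|^{4}\,dx<0,
\]
since $f'>0$ on $(0,+\infty)$ by (\ref{f4}). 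In particular $0$ is a regular value of $G_{\varepsilon}$, so $\mathcal{N}_{\varepsilon}$ is a $C^{1}$ manifold and the Lagrange multiplier rule furnishes $\{\lambda_{n}\}\subset\mathbb{R}$ with $J'_{\varepsilon}(u_{n})-\lambda_{n}G'_{\varepsilon}(u_{n})\to 0$ in $H_{\varepsilon}^{*}$. Testing this relation with $u_{n}$ and using $J'_{\varepsilon}(u_{n})[u_{n}]=0$ (as $u_{n}\in\mathcal{N}_{\varepsilon}$) gives $\lambda_{n}G'_{\varepsilon}(u_{n})[u_{n}]=o(1)$.

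The crucial step is to prove that $|G'_{\varepsilon}(u_{n})[u_{n}]|$ stays bounded away from zero, so that $\lambda_{n}\to 0$. By (\ref{f4}) one has $f'(|u_{n}|^{2})|u_{n}|^{4}\geq\frac{p-2}{2}C_{p}|u_{n}|^{p}$, whence $|G'_{\varepsilon}(u_{n})[u_{n}]|\geq (p-2)C_{p}\|u_{n}\|_{p}^{p}$; thus it suffices to bound $\|u_{n}\|_{p}$ from below along $\mathcal{N}_{\varepsilon}$. To this end I would start from $\|u_{n}\|_{\varepsilon}^{2}=\int_{\mathbb{R}^2}f(|u_{n}|^{2})|u_{n}|^{2}$, apply \eqref{2.10} with $q=p$ and $\zeta<V_{0}$ to absorb the $\|u_{n}\|_{2}^{2}$-term into the left-hand side via \eqref{V}, and control the remaining exponential integral with the H\"older inequality, \eqref{ineqe}, and the Trudinger--Moser Lemma \ref{le24}, exactly as in Lemma \ref{unifbound}; since $\limsup_{n}\|u_{n}\|_{\varepsilon}^{2}<1/8<1$ forces $\|\nabla|u_{n}|\|_{2}^{2}<1$ by the diamagnetic inequality \eqref{2.1}, the exponential factor remains bounded. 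This yields $\|u_{n}\|_{\varepsilon}^{2}\leq C\|u_{n}\|_{ps'}^{p}$ for some $s'>1$, and interpolating $\|u_{n}\|_{ps'}$ between $\|u_{n}\|_{p}$ and a higher norm bounded by $C\|u_{n}\|_{\varepsilon}$, together with the uniform estimate $\|u_{n}\|_{\varepsilon}\geq K_{1}$ of Lemma \ref{unifbound}, gives $\|u_{n}\|_{p}\geq\delta>0$. Consequently $|G'_{\varepsilon}(u_{n})[u_{n}]|\geq(p-2)C_{p}\delta^{p}>0$ and $\lambda_{n}\to 0$. As $\{G'_{\varepsilon}(u_{n})\}$ is bounded in $H_{\varepsilon}^{*}$ (again by the growth bounds on $f,f'$, using (\ref{f5}), and Lemma \ref{le24}), we conclude $J'_{\varepsilon}(u_{n})\to 0$ in $H_{\varepsilon}^{*}$, i.e. $\{u_{n}\}$ is a free $(PS)_{c}$ sequence for $J_{\varepsilon}$.

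At this stage the hypotheses of Proposition \ref{gt688} are satisfied: $\{u_{n}\}$ is a $(PS)_{c}$ sequence for $J_{\varepsilon}$ with $\limsup_{n}\|u_{n}\|_{\varepsilon}^{2}<1/8$ and $c<c_{V_{\infty}}$ if $V_{\infty}<+\infty$ (or $c$ arbitrary if $V_{\infty}=+\infty$). Hence, up to a subsequence, $u_{n}\to u$ in $H_{\varepsilon}$. The bound $\|u_{n}\|_{\varepsilon}\geq K_{1}$ passes to the limit, so $\|u\|_{\varepsilon}\geq K_{1}>0$ and $u\neq 0$; moreover $G_{\varepsilon}$ is continuous along the convergent sequence, so $G_{\varepsilon}(u)=\lim_{n}G_{\varepsilon}(u_{n})=0$, that is $u\in\mathcal{N}_{\varepsilon}$. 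Therefore the subsequence converges in $\mathcal{N}_{\varepsilon}$, which is the assertion.

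The main obstacle I anticipate is precisely the non-degeneracy of $G'_{\varepsilon}(u_{n})[u_{n}]$, i.e. the lower bound $\|u_{n}\|_{p}\geq\delta>0$: in the exponential-critical regime the quantity $\int_{\mathbb{R}^2}|u_{n}|^{p}(e^{\alpha|u_{n}|^{2}}-1)$ can only be handled after raising the integrability exponent through H\"older, so the lower bound on $\|u_{n}\|_{p}$ is not immediate and must be recovered by interpolation, in combination with the Nehari estimate $\|u_{n}\|_{\varepsilon}\geq K_{1}$ and the subcritical threshold $\|u_{n}\|_{\varepsilon}^{2}<1/8$ that keeps all Trudinger--Moser integrals finite.
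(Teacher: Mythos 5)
Your proposal is correct and follows essentially the same route as the paper: Lagrange multipliers for the constrained $(PS)_c$ sequence, the sign and non-degeneracy of $T'_{\varepsilon}(u_n)[u_n]=-2\int_{\mathbb{R}^2}f'(|u_n|^2)|u_n|^4\,dx$ via (\ref{f4}) together with the uniform bound $\|u_n\|_{\varepsilon}\geq K_1$ of Lemma \ref{unifbound} and Trudinger--Moser estimates, boundedness of $T'_{\varepsilon}(u_n)$ in $H_{\varepsilon}^{*}$, hence $\lambda_n\to 0$ and reduction to Proposition \ref{gt688}. The only difference is cosmetic: where the paper argues by contradiction (assuming $T'_{\varepsilon}(u_n)[u_n]\to 0$ and deriving $u_n\to 0$ in $H_{\varepsilon}$), you obtain the lower bound $\|u_n\|_{p}\geq\delta>0$ directly by the same interpolation and Nehari estimates, and you additionally spell out why the limit stays in $\mathcal{N}_{\varepsilon}$, which the paper leaves implicit.
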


\begin{proof}
Let $\{u_{n}\}\subset \mathcal{N}_{\varepsilon}$ be a $(PS)_{c}$ sequence of $J_{\varepsilon}$ restricted to $\mathcal{N}_{\varepsilon}$. Then,
$J_{\varepsilon}(u_{n})\rightarrow c$ as $n\rightarrow +\infty$ and there exists $\{\lambda_{n}\}\subset \mathbb{R}$ such that
\begin{align}\label{3.15}
J'_{\varepsilon}(u_{n})=\lambda_{n}T'_{\varepsilon}(u_{n})+o_{n}(1),
\end{align}
with
\[
T_{\varepsilon}:H_{\varepsilon}\rightarrow  \mathbb{R},
\qquad
T_{\varepsilon}(u):=\Vert u\Vert_{\varepsilon}^{2}-\int_{\mathbb{R}^{2}}f(\vert u\vert^{2})\vert u\vert^{2} dx.
\]
Using (\ref{f4}), we get
\begin{align*}
T'_{\varepsilon}(u_{n}) [u_{n}]
&=2\Vert u_{n}\Vert_{\varepsilon}^{2}-2\int_{\mathbb{R}^{2}}f(\vert u_{n}\vert^{2})\vert u_{n}\vert^{2}
dx-2\int_{\mathbb{R}^{2}}f'(\vert u_{n}\vert^{2})\vert u_{n}\vert^{4} dx\\
&=-2\int_{\mathbb{R}^{2}}f'(\vert u_{n}\vert^{2})\vert u_{n}\vert^{4} dx\leq -(p-2)C_{p}\int_{\mathbb{R}^{2}}\vert u_{n}\vert^{p} dx\leq 0.
\end{align*}
Moreover $\{T'_{\varepsilon}(u_{n})\}$ is bounded. Indeed, using the H\"older inequality, \eqref{2.8}, and (\ref{f5}),
\begin{align*}
	\vert T'_{\varepsilon}(u_{n}) [\phi] \vert
	&\leq
	2\Vert u_{n}\Vert_{\varepsilon}\Vert \phi\Vert_{\varepsilon}
	+2\int_{\mathbb{R}^{2}}f(\vert u_{n}\vert^{2})\vert u_{n}\vert\vert \phi\vert dx
	+2\int_{\mathbb{R}^{2}}f'(\vert u_{n}\vert^{2}) \vert u_{n}\vert^{3} \vert \phi\vert dx\\
	&\leq C
	\left[
	\Vert u_{n}\Vert_{\varepsilon}\Vert \phi\Vert_{\varepsilon}
	+\int_{\mathbb{R}^{2}} |u_n|^{q-1} |\phi| (e^{\alpha |u_n|^2}-1) dx
	+\int_{\mathbb{R}^{2}} \vert u_{n}\vert^{3} \vert \phi\vert (e^{4\pi |u_n|^2}-1)   dx
	\right].
\end{align*}
Then, using similar arguments as in \eqref{vj} and taking $\alpha>4\pi$ and $r>1$ with $\alpha r <32 \pi$, for $n$ large enough,
\begin{align*}
\int_{\mathbb{R}^{2}} |u_n|^{q-1}  |\phi| (e^{\alpha |u_n|^2}-1)dx
&\leq
\Big( \int_{\mathbb{R}^{2}} |u_n|^{r'(q-1)}  |\phi|^{r'} dx \Big)^{1/r'}
\Big( 	\int_{\mathbb{R}^{2}}(e^{\frac{\alpha r}{8} (2\sqrt{2}\vert u_{n}\vert)^{2}}-1)dx \Big)^{1/r}\\
&\leq
C\| u_n \|_{\varepsilon}^{q-1} \|\phi\|_\varepsilon,
\end{align*}
where $r'$ is the conjugate exponent of $r$.\\
Analogously
\[
\int_{\mathbb{R}^{2}} \vert u_{n}\vert^{3} \vert \phi\vert   (e^{4\pi |u_n|^2}-1) dx
\leq
C\Vert u_{n}\Vert_{\varepsilon}^{3}\Vert \phi\Vert_{\varepsilon}.
\]
Thus, using the boundedness of $\{u_{n}\}$,  for every $\phi\in H_{\varepsilon}$ we have that
\[
\vert T'_{\varepsilon}(u_{n}) [\phi] \vert
\leq
C(\Vert u_{n}\Vert_{\varepsilon}+\Vert u_{n}\Vert_{\varepsilon}^{q-1}+\Vert u_{n}\Vert_{\varepsilon}^{3})\Vert \phi\Vert_{\varepsilon}
\leq C \Vert \phi\Vert_{\varepsilon}.
\]
Thus, up to a subsequence, $ T'_{\varepsilon}(u_{n})[ u_{n}]\rightarrow\varsigma\leq 0$. \\
We claim that $ T'_{\varepsilon}(u_{n})[ u_{n}]\not\rightarrow 0$.\\
Indeed, if  $ T'_{\varepsilon}(u_{n})[ u_{n}]\rightarrow 0$, then also $u_{n}\rightarrow 0$ in $L^{p}(\mathbb{R}^{2}, \mathbb{C})$, since
\[
o_{n}(1)=\vert T'_{\varepsilon}(u_{n}) [u_{n}] \vert\geq C \int_{\mathbb{R}^{2}}\vert u_{n}\vert^{p} dx.
\]
Thus, by interpolation, $u_{n}\rightarrow 0$ in $L^{\tau}(\mathbb{R}^{2}, \mathbb{C})$, for all $\tau > 2$.\\
Moreover, using similar arguments as before and taking $\alpha>4\pi$ and $r>1$ with $\alpha r <32 \pi$, we get that, for every $\zeta >0$ and for $n$ large enough,
	\[
	\Vert u_{n}\Vert_{\varepsilon}^{2}
	=\int_{\mathbb{R}^{2}}f(\vert u_{n}\vert^{2})\vert u_{n}\vert^{2} dx
	\leq
	\frac{\zeta}{V_{0}}\Vert u_{n}\Vert_{\varepsilon}^{2}
	+C \| u_n \|_{qr'}^q
	\Big( 	\int_{\mathbb{R}^{2}}(e^{\frac{\alpha r}{8} (2\sqrt{2}\vert u_{n}\vert)^{2}}-1)dx \Big)^{1/r'}
	=
	\frac{\zeta}{V_{0}}\Vert u_{n}\Vert_{\varepsilon}^{2}
	+ o_n(1)
	\]
and so $u_{n}\rightarrow 0$ in $H_{\varepsilon}$, which is in contradiction with Lemma \ref{unifbound}.\\
Hence, since $\{u_{n}\}\subset \mathcal{N}_{\varepsilon}$, by \eqref{3.15} we get $\lambda_{n}=o_{n}(1)$ and so $J'_{\varepsilon}(u_{n})=o_{n}(1)$. Then, applying Proposition \ref{gt688}, we conclude.
\end{proof}

 Arguing as in the first part of the proof of Proposition \ref{gt621}, we have
\begin{corollary}\label{gt678}
The constrained critical points of the functional $J_{\varepsilon}$ on $\mathcal{N}_{\varepsilon}$ are critical points of $J_{\varepsilon}$ in $H_{\varepsilon}$.
\end{corollary}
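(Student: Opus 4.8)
The plan is to exhibit the constrained critical point as an unconstrained one by showing that the associated Lagrange multiplier vanishes. Since $J_\varepsilon \in C^1(H_\varepsilon,\mathbb{R})$ and, by the $C^1$ regularity of $f$, also $T_\varepsilon \in C^1(H_\varepsilon,\mathbb{R})$, the set $\mathcal{N}_\varepsilon = \{u \in H_\varepsilon\setminus\{0\} : T_\varepsilon(u)=0\}$ is a level set of $T_\varepsilon$ away from the origin. Hence, if $u\in\mathcal{N}_\varepsilon$ is a critical point of the restriction $J_\varepsilon|_{\mathcal{N}_\varepsilon}$, the Lagrange multiplier rule provides $\lambda\in\mathbb{R}$ with $J'_\varepsilon(u)=\lambda\,T'_\varepsilon(u)$ as continuous linear functionals on $H_\varepsilon$. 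Everything then reduces to proving $\lambda=0$.

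First I would test this identity against $u$ itself. Because $u\in\mathcal{N}_\varepsilon$, we have $J'_\varepsilon(u)[u]=T_\varepsilon(u)=0$, so the multiplier identity collapses to $\lambda\,T'_\varepsilon(u)[u]=0$; it therefore suffices to show that $T'_\varepsilon(u)[u]\neq 0$. Arguing exactly as in the first part of the proof of Proposition \ref{gt621}, a direct differentiation gives
\[
T'_\varepsilon(u)[u]
=2\Vert u\Vert_\varepsilon^2-2\int_{\mathbb{R}^2}f(\vert u\vert^2)\vert u\vert^2\,dx-2\int_{\mathbb{R}^2}f'(\vert u\vert^2)\vert u\vert^4\,dx
=-2\int_{\mathbb{R}^2}f'(\vert u\vert^2)\vert u\vert^4\,dx,
\]
where the last equality uses again $u\in\mathcal{N}_\varepsilon$.

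To conclude I would invoke (\ref{f4}): since $f'(t)\geq \frac{p-2}{2}C_p\,t^{(p-4)/2}$ for all $t>0$, the quantity above is bounded by $-(p-2)C_p\int_{\mathbb{R}^2}\vert u\vert^p\,dx\leq 0$. The only point requiring a little care is the \emph{strict} negativity: elements of $\mathcal{N}_\varepsilon$ are nonzero and, by Lemma \ref{unifbound}, even satisfy $\Vert u\Vert_\varepsilon\geq K_1>0$, so $\int_{\mathbb{R}^2}\vert u\vert^p\,dx>0$ and thus $T'_\varepsilon(u)[u]<0$. Substituting this into $\lambda\,T'_\varepsilon(u)[u]=0$ forces $\lambda=0$, whence $J'_\varepsilon(u)=0$, i.e. $u$ is a critical point of $J_\varepsilon$ in $H_\varepsilon$. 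The argument is essentially a one-line consequence of the negativity of $T'_\varepsilon$ along the Nehari direction, so I expect no genuine analytical obstacle beyond the exponential-growth estimates already established for the boundedness of $T'_\varepsilon$ in Proposition \ref{gt621}.
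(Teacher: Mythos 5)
Your proof is correct and follows essentially the same route as the paper, whose own proof of this corollary is simply a pointer to the first part of the proof of Proposition \ref{gt621}: there one finds exactly your Lagrange-multiplier identity $J'_{\varepsilon}(u)=\lambda T'_{\varepsilon}(u)$, the computation $T'_{\varepsilon}(u)[u]=-2\int_{\mathbb{R}^{2}}f'(\vert u\vert^{2})\vert u\vert^{4}\,dx\leq -(p-2)C_{p}\int_{\mathbb{R}^{2}}\vert u\vert^{p}\,dx$ via (\ref{f4}), and the conclusion $\lambda=0$ after testing against $u$. Your observation that for a single exact critical point the strict negativity needs only $u\neq 0$ (rather than the interpolation/contradiction argument the paper uses for $(PS)$ sequences) is precisely the simplification the paper's cross-reference intends.
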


Now we can complete the proof of Theorem \ref{gt62}.
\begin{proof} [Proof of Theorem \ref{gt62}]
By Lemma \ref{mountain}, we know that the functional $J_{\varepsilon}$ has the Mountain Pass geometry. Thus,  there exists
a $(PS)_{c_{\varepsilon}}$ sequence $\{u_{n}\}\subset H_{\varepsilon}$ for $J_{\varepsilon}$, with $c_\varepsilon$ defined in \eqref{ceps}. By Lemma \ref{PSbdd} we have that $\{u_{n}\}$ is bounded in $H_{\varepsilon}$
and, by (\ref{f3}), Lemma \ref{newadd}, and \eqref{moncmu}, for $\varepsilon$ small enough,
	\begin{align*}
		\frac{\theta-2}{2\theta}\|u_n\|_{\varepsilon}^2
		&\leq
		\Big(\frac{1}{2}-\frac{1}{\theta}\Big) \|u_n\|_{\varepsilon}^2
		+\frac{1}{\theta}\int_{\mathbb{R}^{2}}\Big(f( |u_{n}|^{2}) |u_{n}|^{2}-\frac{\theta}{2}F( |u_{n}|^{2})\Big)dx
		=
		J_{\varepsilon}(u_{n})-\frac{1}{\theta}J_{\varepsilon}'(u_{n})[u_{n}]\\
		&\leq
		c_\varepsilon + o_n(1) + o_n(1) \|u_n\|_{\varepsilon}
		\leq
		c_{V_0}+ o_n(1) + o_n(1) \|u_n\|_{\varepsilon}
		\leq
		c_{V^*}+ o_n(1) + o_n(1) \|u_n\|_{\varepsilon},
	\end{align*}
Thus, using Lemma \ref{le32}, for $n$ large enough we have
\[
\frac{\theta-2}{2\theta}\|u_n\|_{\varepsilon}^2
<\frac{\theta-2}{16\theta}
\]
and so $\limsup_{n}\Vert u_{n}\Vert_{\varepsilon}^2< 1/8$.
Hence, \eqref{moncmu} and Proposition \ref{gt688} allow us to conclude.
\end{proof}

 We conclude this section showing the behavior of the energy levels $c_\varepsilon$ of the solutions found in Theorem \ref{gt62}.
\begin{proposition}\label{propcecv0}
Let $\{\varepsilon_n\}\subset (0,+\infty)$ be such that $\varepsilon_n \to 0^+$ as $n\to +\infty$ and, for $n$ large enough, $u_{n}$ be a respective ground state found in Theorem \ref{gt62}  for  $\varepsilon=\varepsilon_{n}$ with energy $J_{\varepsilon_{n}}(u_{n})=c_{\varepsilon_{n}}$. Up to a subsequence, we have that
\[
c_{\varepsilon_{n}}\rightarrow c_{V_{0}}.
\]
\end{proposition}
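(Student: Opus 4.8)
The plan is to establish the two one-sided bounds $\limsup_n c_{\varepsilon_n}\le c_{V_0}$ and $\liminf_n c_{\varepsilon_n}\ge c_{V_0}$. In fact the argument for the second one works for every $\varepsilon$, and combined with the first it shows $c_{\varepsilon_n}=c_{V_0}$ for all $n$ large, so the convergence (and even the passage to a subsequence) is automatic.

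For the upper bound I would simply invoke Lemma \ref{newadd}: since $\varepsilon_n\to 0^+$, we have $\varepsilon_n<\bar\varepsilon$ for $n$ large, hence $c_{\varepsilon_n}\le c_{V_0}$.

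The substance is the lower bound. Set $w_n:=|u_n|$, which by the diamagnetic inequality \eqref{2.1} lies in $H^1(\mathbb{R}^2,\mathbb{R})\setminus\{0\}$, and let $t_n>0$ be the unique number given by \eqref{maxmu} (with $\mu=V_0$) such that $t_nw_n\in\mathcal{N}_{V_0}$. First I would show that $t_n\le 1$. Writing the two Nehari identities $\|u_n\|_{\varepsilon_n}^2=\int_{\mathbb{R}^2}f(|u_n|^2)|u_n|^2\,dx$ (because $u_n\in\mathcal{N}_{\varepsilon_n}$) and $\|\nabla w_n\|_2^2+V_0\|w_n\|_2^2=\int_{\mathbb{R}^2}f(t_n^2w_n^2)w_n^2\,dx$ (because $t_nw_n\in\mathcal{N}_{V_0}$), and combining them with \eqref{2.1} and $V_{\varepsilon_n}(x)\ge V_0$, which yield $\|\nabla w_n\|_2^2+V_0\|w_n\|_2^2\le\|u_n\|_{\varepsilon_n}^2$, one gets $\int_{\mathbb{R}^2}\big(f(t_n^2w_n^2)-f(w_n^2)\big)w_n^2\,dx\le 0$. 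Since by (\ref{f3}) and (\ref{f4}) the function $f$ is positive and strictly increasing on $(0,+\infty)$, were $t_n>1$ the integrand would be strictly positive on the set $\{w_n>0\}$ of positive measure, a contradiction; hence $t_n\le 1$.

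Then I would use the monotonicity of $g(s):=f(s)s-F(s)$: by (\ref{f4}) one has $g'(s)=f'(s)s>0$ for $s>0$, so $g$ is increasing with $g(0)=0$. Rewriting the energies via the Nehari identities as $J_{\varepsilon_n}(u_n)=\tfrac12\int_{\mathbb{R}^2}g(|u_n|^2)\,dx$ and $I_{V_0}(t_nw_n)=\tfrac12\int_{\mathbb{R}^2}g(t_n^2w_n^2)\,dx$, the bound $t_n\le1$ and the monotonicity of $g$ give
\[
c_{V_0}\le I_{V_0}(t_nw_n)=\tfrac12\int_{\mathbb{R}^2}g(t_n^2w_n^2)\,dx\le\tfrac12\int_{\mathbb{R}^2}g(|u_n|^2)\,dx=J_{\varepsilon_n}(u_n)=c_{\varepsilon_n}.
\]
Together with the upper bound this yields $c_{\varepsilon_n}=c_{V_0}$ for $n$ large, whence $c_{\varepsilon_n}\to c_{V_0}$. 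I expect the only genuinely delicate point to be the Nehari/diamagnetic bookkeeping that forces $t_n\le1$; once this is in place the monotonicity of $g$ closes the estimate immediately, and everything else is a direct consequence of the quoted lemmas.
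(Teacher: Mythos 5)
Your proof is correct, and its skeleton---upper bound from Lemma \ref{newadd}, lower bound by projecting the modulus $|u_n|$ onto $\mathcal{N}_{V_0}$ and comparing energies through the diamagnetic inequality \eqref{2.1} and $V_{\varepsilon_n}\ge V_0$---is the same as the paper's, but the execution of the lower bound is genuinely different. The paper first runs a Lions-type concentration argument (as in Lemma \ref{le35}) to produce translations $\tilde y_n$ and a nontrivial weak limit of $v_n:=u_n(\cdot+\tilde y_n)$, then takes the Nehari projection $t_n|v_n|\in\mathcal{N}_{V_0}$ and closes with the single chain
\[
c_{V_{0}} \leq I_{V_{0}}(t_{n}|v_{n}|) = I_{V_0}(t_n|u_n|) \leq J_{\varepsilon_{n}}(t_n u_{n}) \leq \max_{t\geq 0}J_{\varepsilon_{n}}(tu_{n}) = J_{\varepsilon_{n}}(u_{n}) = c_{\varepsilon_{n}} \leq c_{V_{0}},
\]
where the equality $\max_{t\geq 0}J_{\varepsilon_{n}}(tu_{n})=J_{\varepsilon_{n}}(u_{n})$ comes from \eqref{Nehepsmax}: since $u_n\in\mathcal{N}_{\varepsilon_n}$, it maximizes $J_{\varepsilon_n}$ along its own ray, so no information on the size of $t_n$ is needed. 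You instead prove $t_n\le 1$ from the two Nehari identities combined with the diamagnetic inequality, and then use the monotonicity of $g(s)=f(s)s-F(s)$ (with $g'(s)=f'(s)s>0$ by (\ref{f4})); this is precisely the device the paper employs in Case 2 of Lemma \ref{compactness}, so in effect you transplant that mechanism here. What each approach buys: the paper's ray-maximum trick makes the step $t_n\le 1$ superfluous, while your route avoids the translation/weak-limit machinery entirely, which---as your argument shows---is not actually needed for this statement (it matters in Proposition \ref{prop41}, from which the paper's proof is evidently adapted; the weak limit $v$ extracted there is never used in the final chain). Note also that both arguments in fact yield the exact identity $c_{\varepsilon_n}=c_{V_0}$ for every $n$ with $\varepsilon_n<\bar{\varepsilon}$, so, as you observe, the passage to a subsequence in the statement is not really required.
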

\begin{proof}
By Lemma \ref{unifbound} and Lemma \ref{newadd}, $0<K_2\leq c_{\varepsilon_{n}}\leq c_{V_{0}}$. Thus, up to a subsequence, $c_{\varepsilon_{n}}\rightarrow \bar{c}>0$.\\
Since, using (\ref{f3}), Lemma \ref{newadd}, the monotonicity of $c_\mu$'s, and Lemma \ref{le32},
\begin{align*}
	\frac{\theta-2}{2\theta} \limsup_n \|u_n\|_{\varepsilon_n}^2
	&\leq
	\limsup_n\Big[\Big(\frac{1}{2}-\frac{1}{\theta}\Big)  \|u_n\|_{\varepsilon_n}^2
	+\frac{1}{\theta}\int_{\mathbb{R}^{2}}\Big(f( |u_{n}|^{2}) |u_{n}|^{2}-\frac{\theta}{2}F(| u_{n}|^{2})\Big)dx\Big]\\
	&=
	\lim_n \Big[ J_{\varepsilon_{n}}(u_{n})-\frac{1}{\theta}J_{\varepsilon_{n}}'(u_{n})[u_{n}]\Big]
	=\bar{c}
	\leq c_{V_0}
	<\frac{\theta-2}{16\theta}
\end{align*}
and so, by the diamagnetic inequality \eqref{2.1},
\begin{equation}
	\label{|un|bdd2}
	\limsup_n \|\nabla |u_n| \|_2^2
	\leq
	\limsup_n (\|\nabla |u_n|\|_2^2+V_0\|u_n\|_2^2)
	\leq
	\limsup_n \|u_n\|_{\varepsilon_n}^2
	<\frac{1}{8}.
\end{equation}	
Moreover, since $\bar{c}>0$, we have that $\Vert u_{n}\Vert_{\varepsilon_{n}}\not\rightarrow 0$. Thus, arguing as in Lemma \ref{le35}, we get that there exist a sequence $(\tilde{y}_{n})\subset \mathbb{R}^{2}$ and constants $R$, $\gamma>0$ such that
\begin{equation}\label{4.32}
	\liminf_n\int_{B_{R}(\tilde{y}_{n})}\vert u_{n}\vert^{2}dx\geq \gamma.
\end{equation}
Let $v_{n}:=u_{n} (\cdot+\tilde{y}_{n})$.\\
By \eqref{|un|bdd2} we have that $\{|v_{n}|\}$ is bounded in $H^{1}(\mathbb{R}^{2}, \mathbb{R})$ and so, up to a subsequence, $|v_{n}|\rightharpoonup v$ in $H^{1}(\mathbb{R}^{2}, \mathbb{R})$, with $v\neq 0$ by \eqref{4.32}.\\
Let now $t_{n}>0$ be such that $t_{n}|v_{n}|\in \mathcal{N}_{V_{0}}$ and $y_{n}:=\varepsilon_{n}\tilde{y}_{n}$.\\	
Using the diamagnetic inequality \eqref{2.1}
\[
c_{V_{0}}
\leq I_{V_{0}}(t_{n}|v_{n}|)
= I_{V_0} (t_n |u_n|)
\leq J_{\varepsilon_{n}}(t_n u_{n})
\leq \max_{t\geq 0}J_{\varepsilon_{n}}(tu_{n})
=J_{\varepsilon_{n}}(u_{n})
=c_{\varepsilon_{n}}
\leq c_{V_{0}},
\]
and we conclude.
\end{proof}

\section{Proof of Theorem \ref{gt666} }\label{Sec4}
	In this section, by Ljusternik-Schnirelmann category theory, we prove a multiplicity result for problem \eqref{1.6}.
	
	First, let us provide some useful preliminaries.
	
	Let us start with the following useful compactness result.
	
	\begin{proposition}\label{prop41}
		Let $\varepsilon_{n}\rightarrow 0^+$ and $\{u_{n}\}\subset \mathcal{N}_{\varepsilon_{n}}$ be such that $J_{\varepsilon_{n}}(u_{n})\rightarrow c_{V_{0}}$. Then there exists
		$\{\tilde{y}_{n}\}\subset \mathbb{R}^{2}$ such that the sequence $\{\vert v_n\vert\}\subset H^{1}(\mathbb{R}^{2}, \mathbb{R})$, where $v_{n}(x):= u_{n}(x+\tilde{y}_{n})$, has a convergent subsequence in $H^{1}(\mathbb{R}^{2}, \mathbb{R})$. Moreover, up to a subsequence, $y_{n}:=\varepsilon_{n}\tilde{y}_{n}\rightarrow y\in M$ as $n\to+\infty$.
	\end{proposition}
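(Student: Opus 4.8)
The plan is to run the classical concentration scheme adapted to the magnetic, exponentially critical setting: first extract a translation $\tilde y_n$ that rules out vanishing, then identify the strong $H^1$-limit of the moduli through the limit problem of Section~\ref{Secthree}, and finally locate the concentration point by an energy comparison with frozen potentials.

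First I would record the a priori bound. Since $u_n\in\mathcal N_{\varepsilon_n}$ and $J_{\varepsilon_n}(u_n)\to c_{V_0}$, arguing with (\ref{f3}) as in Lemma~\ref{unifbound} gives $\frac{\theta-2}{2\theta}\limsup_n\|u_n\|_{\varepsilon_n}^2\le c_{V_0}\le c_{V^*}<\frac{\theta-2}{16\theta}$ by Lemma~\ref{le32} and \eqref{moncmu}; hence $\limsup_n\|u_n\|_{\varepsilon_n}^2<1/8$ and, by the diamagnetic inequality \eqref{2.1}, $\{|u_n|\}$ is bounded in $H^1$ with $\limsup_n\|\nabla|u_n|\|_2^2<1/8$. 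Moreover $\|u_n\|_{\varepsilon_n}\ge K_1>0$ by Lemma~\ref{unifbound}, so $u_n\not\to0$. Vanishing is then excluded exactly as in Lemma~\ref{le35}: if $\sup_y\int_{B_R(y)}|u_n|^2\to0$ for every $R$, the Lions-type lemma \cite[Chapter 6, Lemma 8.4]{rK} forces $\|u_n\|_\tau\to0$ for all $\tau>2$, and inserting this into the Nehari identity $\|u_n\|_{\varepsilon_n}^2=\int f(|u_n|^2)|u_n|^2$ together with \eqref{2.10}, the Trudinger--Moser Lemma~\ref{le24} (licit since $\|\nabla|u_n|\|_2^2<1$) and \eqref{ineqe} would give $\|u_n\|_{\varepsilon_n}\to0$, a contradiction. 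Thus there exist $R,\gamma>0$ and $\tilde y_n\in\mathbb R^2$ with $\liminf_n\int_{B_R(\tilde y_n)}|u_n|^2\ge\gamma$.

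Next I set $v_n:=u_n(\cdot+\tilde y_n)$, so $\{|v_n|\}$ is bounded in $H^1$ and, up to a subsequence, $|v_n|\rightharpoonup v$ with $v\neq0$ (local compactness and the non-vanishing estimate). Choosing $t_n>0$ with $t_n|v_n|\in\mathcal N_{V_0}$, the uniform Nehari bound yields $t_n\ge c>0$, while (\ref{f4}) together with Fatou's lemma (using $v\neq0$ and $p>2$) bounds $t_n$ from above, so $t_n\to t_0>0$. The key chain, coming from \eqref{2.1}, the inequality $V_0\le V_{\varepsilon_n}$ and the maximality \eqref{Nehepsmax}, is $c_{V_0}\le I_{V_0}(t_n|v_n|)=I_{V_0}(t_n|u_n|)\le J_{\varepsilon_n}(t_nu_n)\le\max_{t\ge0}J_{\varepsilon_n}(tu_n)=J_{\varepsilon_n}(u_n)\to c_{V_0}$, whence $I_{V_0}(t_n|v_n|)\to c_{V_0}$. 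Since $t_n|v_n|\rightharpoonup t_0v\neq0$, the first alternative of Lemma~\ref{lemFat} applies and gives $t_n|v_n|\to t_0v=:\omega$ strongly in $H^1$, with $\omega\in\mathcal N_{V_0}$ a ground state of \eqref{3.2} for $\mu=V_0$, i.e. $I_{V_0}(\omega)=c_{V_0}$. Dividing by $t_n\to t_0>0$ yields $|v_n|\to v$ strongly in $H^1$, proving the first assertion.

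Finally, with $y_n:=\varepsilon_n\tilde y_n$ and $\hat V_n(x):=V(\varepsilon_n x+y_n)$, I would locate $y$. For every $t\ge0$ the diamagnetic inequality and a translation give $J_{\varepsilon_n}(u_n)=\max_sJ_{\varepsilon_n}(su_n)\ge J_{\varepsilon_n}(tu_n)\ge\hat I_n(t|v_n|)$, where $\hat I_n(w):=\frac12\int(|\nabla w|^2+\hat V_n w^2)-\frac12\int F(w^2)$. To prove $\{y_n\}$ bounded I argue by contradiction: if $|y_n|\to\infty$ then $\liminf_n\hat V_n\ge V_\infty$ a.e. by \eqref{V}; when $V_\infty<\infty$, Fatou's lemma and $|v_n|\to v$ in $H^1$ give $\liminf_n\hat I_n(z|v_n|)\ge I_{V_\infty}(zv)$ for $zv\in\mathcal N_{V_\infty}$, so $c_{V_0}\ge I_{V_\infty}(zv)=\max_t I_{V_\infty}(tv)\ge c_{V_\infty}$ by \eqref{cmu}, contradicting $c_{V_0}<c_{V_\infty}$ from \eqref{moncmu}; when $V_\infty=+\infty$, the potential term forces $\hat I_n(t|v_n|)\to+\infty$ for fixed $t$, hence $J_{\varepsilon_n}(u_n)\to+\infty$, again impossible. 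Therefore, up to a subsequence, $y_n\to y$ and $\hat V_n\to V(y)$ pointwise. Passing to the liminf in $c_{V_0}\ge\hat I_n(t_0|v_n|)$, using Fatou on the potential term and the exponential dominated convergence on the $F$-term, gives $c_{V_0}\ge I_{V(y)}(t_0v)=I_{V(y)}(\omega)=I_{V_0}(\omega)+\frac12(V(y)-V_0)\|\omega\|_2^2=c_{V_0}+\frac12(V(y)-V_0)\|\omega\|_2^2$. Since $\omega\neq0$, this forces $V(y)\le V_0$, and as $V_0=\inf V$ we conclude $V(y)=V_0$, i.e. $y\in M$. The main obstacle is exactly this last step: one must lower-bound $J_{\varepsilon_n}(u_n)$ by the frozen-potential functional $I_{V(y)}$ through the Nehari projection while controlling, uniformly in $n$, the exponentially critical nonlinear terms via Lemma~\ref{le24} and \eqref{ineqe}, and simultaneously treat the dichotomy $V_\infty<\infty$ versus $V_\infty=+\infty$.
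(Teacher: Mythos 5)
Your proposal is correct and follows essentially the same route as the paper's proof: the non-vanishing/Lions argument of Lemma \ref{le35}, the Nehari projection $t_n|v_n|\in\mathcal{N}_{V_0}$ combined with the diamagnetic chain $c_{V_0}\le I_{V_0}(t_n|v_n|)\le J_{\varepsilon_n}(u_n)\to c_{V_0}$, compactness via Lemma \ref{lemFat}, and the dichotomy $V_\infty<+\infty$ versus $V_\infty=+\infty$ to bound $\{y_n\}$ and force $V(y)=V_0$. Your minor variants (bounding $t_n$ directly through (\ref{f4}) and Fatou, comparing with $c_{V_\infty}$ via the projection of $v$ onto $\mathcal{N}_{V_\infty}$ rather than via the strict inequality $I_{V_0}(\tilde v)<I_{V_\infty}(\tilde v)$, and the direct identity $I_{V(y)}(\omega)=c_{V_0}+\tfrac12(V(y)-V_0)\|\omega\|_2^2$ instead of a contradiction) are all sound and do not change the substance of the argument.
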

	\begin{proof}
		Since $\{u_{n}\}\subset \mathcal{N}_{\varepsilon_{n}}$ and $J_{\varepsilon_{n}}(u_{n})\rightarrow c_{V_{0}}$, arguing as in the first part of the proof of Lemma \ref{lemFat} and using the monotonicity of $c_\mu$'s, we have ${\limsup_n}\Vert  u_{n} \Vert_{\varepsilon_n}^2<1/8$ and so, by the diamagnetic inequality  \eqref{2.1},
		\begin{equation}
			\label{|un|bdd}
			\limsup_n \|\nabla |u_n| \|_2^2
			\leq
			\limsup_n (\|\nabla |u_n|\|_2^2+V_0\|u_n\|_2^2)
			<\frac{1}{8}.
		\end{equation}
		Moreover, since $c_{V_{0}}>0$, we have that $\Vert u_{n}\Vert_{\varepsilon_{n}}\not\rightarrow 0$. Thus, arguing as in Lemma \ref{le35}, we get that there exist a sequence $(\tilde{y}_{n})\subset \mathbb{R}^{2}$ and constants $R$, $\beta>0$ such that
		\begin{equation}\label{4.3}
			\liminf_n\int_{B_{R}(\tilde{y}_{n})}\vert u_{n}\vert^{2}dx\geq \beta.
		\end{equation}
		Let $v_{n}(x)=u_{n} (x+\tilde{y}_{n})$. By \eqref{|un|bdd} we have that $\{|v_{n}|\}$ is bounded in $H^{1}(\mathbb{R}^{2}, \mathbb{R})$ and so, up to a subsequence, $|v_{n}|\rightharpoonup v$ in $H^{1}(\mathbb{R}^{2}, \mathbb{R})$, with $v\neq 0$ by \eqref{4.3}.\\
		Let now $t_{n}>0$ be such that $\tilde{v}_{n}:=t_{n}|v_{n}|\in \mathcal{N}_{V_{0}}$ and $y_{n}:=\varepsilon_{n}\tilde{y}_{n}$.\\
		By the diamagnetic inequality \eqref{2.1}
		\[
		c_{V_{0}}
		\leq I_{V_{0}}(\tilde{v}_{n})
		= I_{V_0} (t_n |u_n|)
		\leq J_{\varepsilon_{n}}(t_n u_{n})
		\leq \max_{t\geq 0}J_{\varepsilon_{n}}(tu_{n})=J_{\varepsilon_{n}}(u_{n})=c_{V_{0}}+o_{n}(1),
		\]
		and so $I_{V_{0}}(\tilde{v}_{n})\rightarrow c_{V_{0}}$ as $n\to +\infty$.\\
		Thus, by Lemma \ref{lemFat} for $\mu=V_{0}$, $\{\tilde{v}_{n}\}$ is bounded in $H^{1}(\mathbb{R}^{2}, \mathbb{R})$ and, since $\vert v_{n}\vert\not\rightarrow 0$ in $H^{1}(\mathbb{R}^{2}, \mathbb{R})$, then $\{t_{n}\}$ is bounded too and so, up to a subsequence, $t_{n}\rightarrow t_{0}\geq 0$.\\
		Actually we have that $t_{0}>0$.\\
		Indeed, if $t_{0}=0$, then, due to the boundedness of $\{\vert v_{n}\vert\}$ in $H^{1}(\mathbb{R}^{2}, \mathbb{R})$, $\tilde{v}_{n}\rightarrow 0$ in $H^{1}(\mathbb{R}^{2}, \mathbb{R})$, so that $I_{V_{0}}(\tilde{v}_{n})\rightarrow 0$, in contradiction with $c_{V_{0}}>0$.\\
		Hence, up to a subsequence, $\tilde{v}_{n}\rightharpoonup \tilde{v}:=t_{0} v\neq 0$ in $H^{1}(\mathbb{R}^{2}, \mathbb{R})$ and so, by Lemma \ref{lemFat}, $\tilde{v}_{n}\rightarrow \tilde{v}$ in $H^{1}(\mathbb{R}^{2}, \mathbb{R})$, which gives $|v_{n}|\rightarrow v$ in $H^{1}(\mathbb{R}^{2},
		\mathbb{R})$.\\
		Let us show now that $\{y_{n}\}$ is bounded.\\
		Assume by contradiction that $\{y_{n}\}$ is not bounded.  Then there exists a subsequence such that $\vert y_{n}\vert\rightarrow+\infty$.\\
		If $V_{\infty}=+\infty$,
		since, by (\ref{V}),
		\[
		\inf_{B_R(\tilde{y}_n)} V_{\varepsilon_{n}}
		= \inf_{B_{R\varepsilon_{n}}(y_n)} V
		\to +\infty,
		\]
		then
		\begin{align*}
			\int_{\mathbb{R}^{2}} V(\varepsilon_{n}x+y_{n})\vert v_{n}\vert^{2}dx
			&\geq
			\int_{B_R(0)} V(\varepsilon_{n}x+y_{n})\vert v_{n}\vert^{2}dx
			=
			\int_{B_R(0)} V(\varepsilon_{n}x+y_{n})\vert u_{n} (x+\tilde{y}_n)\vert^{2}dx\\
			&=
			\int_{B_R(\tilde{y}_n)} V(\varepsilon_{n}z)\vert u_{n} (z)\vert^{2}dz
			\geq
			\inf_{B_R(\tilde{y}_n)} V_{\varepsilon_{n}}
			\int_{B_R(\tilde{y}_n)} \vert u_{n} (z)\vert^{2}dz
			\to +\infty,
		\end{align*}
		in contradiction with
		\[
		\int_{\mathbb{R}^{2}}V(\varepsilon_{n}x+y_{n})\vert v_{n}\vert^{2}dx
		\leq \Vert u_{n}\Vert_{\varepsilon_{n}}^{2}\leq C.
		\]
		If $V_{\infty}\in\mathbb{R}$, since  $\tilde{v}_{n}\rightarrow \tilde{v}$ in $H^{1}(\mathbb{R}^{2} , \mathbb{R})$, by the Fatou's Lemma and since $H^{1}(\mathbb{R}^{2} , \mathbb{R})$ is continuously embedded in $L^2(\mathbb{R}^{2} , \mathbb{R})$,
		\[
		V_\infty \lim_n \int_{\mathbb{R}^{2}} \vert \tilde{v}_n\vert^{2}dx
		= V_\infty \int_{\mathbb{R}^{2}} \vert \tilde{v}\vert^{2}dx
		\leq
		\liminf_n\int_{\mathbb{R}^{2}} V(\varepsilon_{n}x+y_{n})\vert \tilde{v}_{n}\vert^{2}dx,
		\]
		and so, by the diamagnetic inequality \eqref{2.1} and \eqref{Nehepsmax},
		\begin{equation}\label{inequality}
			\begin{split}
				c_{V_{0}}
				&=I_{V_{0}}(\tilde{v})< I_{V_{\infty}}(\tilde{v})
				= \lim_n I_{V_{\infty}}(\tilde{v}_n)\\
				&
				\leq
				\liminf_n \frac{1}{2}\int_{\mathbb{R}^{2}} V(\varepsilon_{n}x+y_{n})\vert \tilde{v}_{n}\vert^{2}dx
				- \frac{V_\infty}{2} \int_{\mathbb{R}^{2}} \vert \tilde{v}\vert^{2}dx
				+ \lim_n I_{V_{\infty}}(\tilde{v}_n)\\
				&\leq
				{\liminf_n}\Big(\frac{1}{2}\int_{\mathbb{R}^{2}}(\vert\nabla   \tilde{v}_{n}\vert^{2}
				+ V(\varepsilon_{n}x+y_{n})\vert 		\tilde{v}_{n}\vert^{2})dx
				-\frac{1}{2}\int_{\mathbb{R}^{2}}F(\vert \tilde{v}_{n}\vert^{2})dx\Big)\\
				&=
				{\liminf_n}\Big(\frac{t^{2}_{n}}{2}\int_{\mathbb{R}^{2}}(\vert\nabla  \vert u_{n}\vert\vert^{2}
				+ V_{\varepsilon_{n}}(z)\vert 		u_{n}\vert^{2})dz
				-\frac{1}{2}\int_{\mathbb{R}^{2}}F(t^{2}_{n}\vert u_{n}\vert^{2})dz\Big)\\
				&\leq {\liminf_n}\,J_{\varepsilon_{n}}(t_{n}u_{n})\leq {\liminf_n}\,J_{\varepsilon_{n}}(u_{n})=c_{V_{0}},
			\end{split}
		\end{equation}
		which gives a contradiction.\\
		Hence, up to a subsequence, $y_{n}\rightarrow y\in \mathbb{R}^{2}$.\\
		If $y\not\in M$, then
		$V(y)>V_{0}$ and we can argue as in \eqref{inequality} to get a contradiction and so the proof is complete.
	\end{proof}

	Let now $\delta>0$ be fixed, $\omega\in H^{1}(\mathbb{R}^{2}, \mathbb{R})$ be a positive ground state solution of problem \eqref{3.2} for $\mu=V_{0}$, and $\eta\in C^{\infty}([0, +\infty), [0, 1])$ be a non-increasing cut-off function such that $\eta(t)=1$ if $0\leq t\leq \delta/{2}$ 	and $\eta(t)=0$ if $t\geq \delta$.
	
	For any $y\in M$, let
	\begin{equation*}
		\Psi_{\varepsilon, y}(x):=\eta(\vert \varepsilon x-y\vert)\omega\Big(\frac{\varepsilon x-y}{\varepsilon}\Big)\exp\Big(i\tau_{y}\Big(\frac{\varepsilon x-y}{\varepsilon}\Big)\Big),
	\end{equation*}
	where $M$ is defined in \eqref{M} and $\tau_{y}(x):=A_{1}(y)x_{1}+A_{2}(y)x_{2}$.
	
	Let $t_{\varepsilon,y}>0$ be the unique positive number such that $t_{\varepsilon,y}\Psi_{\varepsilon, y}\in \mathcal{N}_{\varepsilon}$ (and so
	$\max_{t\geq 0}J_{\varepsilon}(t\Psi_{\varepsilon, y})=J_{\varepsilon,y}(t_{\varepsilon, y}\Psi_{\varepsilon, y})$) and  let us consider
	\[
	\Phi_{\varepsilon}:=y\in M\mapsto t_{\varepsilon,y}\Psi_{\varepsilon, y}\in \mathcal{N}_{\varepsilon}.
	\]
	Observe that,  by the assumptions on $V$, $M$ is compact and, for any $y\in M$, $\Phi_{\varepsilon}(y)$ has compact support.\\
	Moreover we have
	\begin{lemma}\label{lem51}
		As $\varepsilon\rightarrow 0^{+}$,
		$$
		\Vert \Psi_{\varepsilon,y}\Vert_{\varepsilon}^{2}\rightarrow \int_{\mathbb{R}^{2}}(\vert \nabla \omega\vert^{2}+V_{0} \omega^{2})dx
		$$
		uniformly with respect to  $y\in M$.
	\end{lemma}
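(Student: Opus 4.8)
The plan is to reduce everything to integrals of a fixed profile by the change of variables $z=x-y/\varepsilon$, under which $\Psi_{\varepsilon,y}(x)=\eta(\varepsilon|z|)\omega(z)e^{iA(y)\cdot z}$ and $A_\varepsilon(x)=A(\varepsilon z+y)$, and then exploit the compactness of $M$ together with the exponential decay of $\omega$ and $\nabla\omega$ (recalled before the statement) to upgrade the resulting limits to uniform limits in $y$. Throughout, the cut-off $\eta$ localizes every integrand to $\varepsilon|z|\le\delta$, hence to $\varepsilon z+y\in M_\delta$, a compact set on which $V$ and $A$ are bounded and uniformly continuous.

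First I would treat the potential term. After the substitution, $\int_{\mathbb{R}^2}V_\varepsilon|\Psi_{\varepsilon,y}|^2\,dx=\int_{\mathbb{R}^2}V(\varepsilon z+y)\eta(\varepsilon|z|)^2\omega^2\,dz$. Writing its difference with $V_0\int\omega^2$ as $\int[V(\varepsilon z+y)-V_0]\eta^2\omega^2\,dz+V_0\int[\eta^2-1]\omega^2\,dz$, I split the $z$-integral into $|z|\le R$ and $|z|>R$: on the ball the uniform continuity of $V$ on $M_\delta$ and $V(y)=V_0$ give a bound that is small uniformly in $y$, while on the complement the $L^2$-tail of $\omega$ makes the contribution small uniformly in $y$; the term with $\eta^2-1$ is supported in $|z|>\delta/(2\varepsilon)$ and vanishes by the same tail estimate.

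The main work is the magnetic gradient term. Factoring out the unimodular phase gives $\nabla_{A_\varepsilon}\Psi_{\varepsilon,y}=e^{iA(y)\cdot z}\big(\frac{1}{i}\nabla g+[A(y)-A(\varepsilon z+y)]g\big)$ with $g:=\eta(\varepsilon|z|)\omega$, and since $\frac{1}{i}\nabla g$ is purely imaginary while $[A(y)-A(\varepsilon z+y)]g$ is real, the modulus squared splits cleanly as $|\nabla g|^2+|A(y)-A(\varepsilon z+y)|^2g^2$. The second integral tends to $0$ uniformly by exactly the ball/complement splitting used above, now invoking the uniform continuity of $A$ on $M_\delta$. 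For $\int|\nabla g|^2$, expanding $\nabla g=\varepsilon\eta'(\varepsilon|z|)\frac{z}{|z|}\omega+\eta(\varepsilon|z|)\nabla\omega$ produces three terms: the two carrying $\eta'$ are supported in $\delta/(2\varepsilon)\le|z|\le\delta/\varepsilon$ and gain a factor $\varepsilon$ (or $\varepsilon^2$), hence vanish using $\|\omega\|_2,\|\nabla\omega\|_2<\infty$, while $\int\eta(\varepsilon|z|)^2|\nabla\omega|^2\to\int|\nabla\omega|^2$ by dominated convergence, the error being controlled by $\int_{|z|>\delta/(2\varepsilon)}|\nabla\omega|^2$. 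All these bounds are independent of $y$.

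I expect the only genuine subtlety to be the uniformity in $y\in M$; this is precisely where I use that $M$ is compact, so that $V$ and $A$ are uniformly continuous and bounded on $M_\delta$, and that $\omega$ and $\nabla\omega$ decay exponentially, so their $L^2$-tails are uniformly small. Combining the potential and gradient limits then yields $\|\Psi_{\varepsilon,y}\|_\varepsilon^2\to\int_{\mathbb{R}^2}(|\nabla\omega|^2+V_0\omega^2)\,dx$ uniformly with respect to $y\in M$.
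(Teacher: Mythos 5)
Your proof is correct and follows essentially the same route as the paper's: the same change of variables $z=x-y/\varepsilon$, the same clean splitting $\vert\nabla_{A_\varepsilon}\Psi_{\varepsilon,y}\vert^2=\vert\nabla(\eta\omega)\vert^2+\vert A(y)-A(\varepsilon z+y)\vert^2\eta^2\omega^2$ (the paper writes this expansion directly), and uniformity in $y$ obtained from compactness of $M$, continuity of $V$ and $A$, and the tails of $\omega$ and $\nabla\omega$. The only cosmetic differences are that you handle the $V$ and $A$ terms by an explicit ball/complement splitting with uniform continuity where the paper puts $\sup_{y\in M}$ inside the integral and applies dominated convergence, and you kill the $\eta'$ terms via the $\varepsilon$ prefactors rather than via tail estimates; both variants are equally valid.
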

	\begin{proof}
		First we observe that
		\[
		\sup_{y\in M} \Big|
		\int_{\mathbb{R}^{2}}V_{\varepsilon}( x)\vert \Psi_{\varepsilon,y}\vert^{2}dx
		- V_{0} \int_{\mathbb{R}^{2}}\omega^{2}dx
		\Big| \to 0
		\text{ as } \varepsilon\rightarrow 0^{+}.
		\]
		Indeed, taking $z=(\varepsilon x-y)/\varepsilon$,
		\[
		\sup_{y\in M} \Big|
		\int_{\mathbb{R}^{2}}\vert \Psi_{\varepsilon,y}\vert^{2}dx
		-  \int_{\mathbb{R}^{2}}\omega^{2}dx
		\Big|
		\leq
		\int_{\mathbb{R}^{2}} |\eta^2(\vert\varepsilon z \vert )-1|\omega^2(z)dz
		\leq
		\int_{B_{\delta/(2\varepsilon)}^c(0)} \omega^2(z)dz\to 0.
		\]
		Moreover
		\begin{align*}
			\sup_{y\in M} \Big|
			\int_{\mathbb{R}^{2}}[V_{\varepsilon}( x)-V_0]\vert \Psi_{\varepsilon,y}\vert^{2}dx
			\Big|
			&\leq
			\sup_{y\in M}
			\int_{\mathbb{R}^{2}}
			|V(\varepsilon z+y)  -V_0|\eta^2(\vert\varepsilon z \vert )\omega^2(z)dz\\
			&\leq
			\int_{B_{\delta/\varepsilon}(0)}\sup_{y\in M}
			|V(\varepsilon z+y)  -V_0|\omega^2(z)dz\\
			&=
			\int_{\mathbb{R}^2} \sup_{y\in M} |V(\varepsilon z+y)  -V_0|\chi_{B_{\delta/\varepsilon}(0)}(z)\omega^2(z)dz
			\to 0,
		\end{align*}
		applying the Lebesgue Dominated Convergence Theorem, since, if $|\varepsilon z|<\delta$, then, due to the compactness of $M$, $|\varepsilon z+y|$ is bounded when $y\in M$,
		\[
		\sup_{y\in M} |V(\varepsilon z+y)  -V_0|\chi_{B_{\delta/\varepsilon}(0)}(z)\omega^2(z)
		\leq
		C \omega^2(z)\in L^1(\mathbb{R}^2,\mathbb{R}),
		\]
		and $\sup_{y\in M} |V(\varepsilon z+y)  -V_0|\chi_{B_{\delta/\varepsilon}(0)}(z)\omega^2(z)$ converges to $0$ a.e. in $\mathbb{R}^2$ as $\varepsilon\rightarrow 0^{+}$.\\
		Then, since, taking again $z=(\varepsilon x-y)/\varepsilon$,
		\begin{align*}
			\int_{\mathbb{R}^{2}}\vert \nabla_{A_{\varepsilon}}\Psi_{\varepsilon,y}\vert^{2}dx
			&=
			\varepsilon^2 \int_{\mathbb{R}^{2}}| \eta'(\vert \varepsilon z\vert)|^2\omega^2(z) dz
			+\int_{\mathbb{R}^{2}}  \eta^2(\vert \varepsilon z\vert) |\nabla\omega(z)|^{2}dz\\
			&\qquad
			+\int_{\mathbb{R}^{2}} |A(y)-A(\varepsilon z+y)|^2  \eta^2(\vert \varepsilon z\vert)\omega^2(z)dz\\
			&\qquad
			+2\varepsilon \int_{\mathbb{R}^{2}}
			\eta(\vert \varepsilon z\vert) \eta'(\vert \varepsilon z\vert)\omega(z)\nabla\omega(z)\cdot \frac{z}{|z|}dz,
		\end{align*}
		and,
		\[
		\Big|
		\int_{\mathbb{R}^{2}} \eta^{2}(\vert \varepsilon z\vert)|\nabla\omega(z)|^{2}dz
		-\int_{\mathbb{R}^{2}}\vert\nabla\omega(z)\vert^{2}dz
		\Big|
		\leq
		\int_{\mathbb{R}^{2}} |\eta^2(\vert\varepsilon z \vert )-1|\vert\nabla\omega(z)\vert^{2}dz
		\leq
		\int_{B_{\delta/(2\varepsilon)}^c(0)} \vert\nabla\omega(z)\vert^{2}dz,
		\]
		\[
		\int_{\mathbb{R}^{2}}|\eta'(\vert \varepsilon z\vert)|^2 \omega^2(z)dz
		\leq
		C\int_{B_{\delta/(2\varepsilon)}^c(0)} \omega^{2}(z)dz,
		\]
		\[
		\int_{\mathbb{R}^{2}}|\eta(\vert \varepsilon z\vert)\eta'(\vert \varepsilon z\vert)\omega(z)\nabla\omega(z)|dz
		\leq
		C\int_{B_{\delta/(2\varepsilon)}^c(0)} \omega(z)|\nabla\omega|dz,
		\]
		
		\[
		\sup_{y\in M} \int_{\mathbb{R}^{2}} |A(y)-A(\varepsilon z+y)|^2  \eta^2(\vert \varepsilon z\vert)\omega^2(z)dz
		\leq
		\int_{B_{\delta/\varepsilon}(0)} \sup_{y\in M} |A(y)-A(\varepsilon z+y)|^2\omega^{2}(z)dz\to 0,
		\]
		due to the continuity of $A$ and a similar argument as before.\\
		Thus we infer
		\[
		\sup_{y\in M} \Big|
		\int_{\mathbb{R}^{2}}\vert \nabla_{A_{\varepsilon}}\Psi_{\varepsilon,y}\vert^{2}dx
		- \int_{\mathbb{R}^{2}}\vert \nabla \omega\vert^{2}dx
		\Big|
		\to 0
		\]
		as $\varepsilon\rightarrow 0^{+}$ and we conclude.
	\end{proof}

Hence we can conclude in the following way.

	\begin{proof} [Proof of Theorem \ref{gt666}]
Arguing as in \cite[Lemma~4.1]{rDJ}, and using Lemma \ref{lem51}, we can prove that $J_{\varepsilon}(\Phi_{\varepsilon}(y))\to c_{V_{0}}$ uniformly with respect to  $y\in M$ as $\varepsilon\rightarrow 0^{+}$.\\
	Then, given $\delta>0$, if $\rho>0$ with $M_{\delta}\subset B_{\rho}$, let
	\[
	\Upsilon: \mathbb{R}^{2}\rightarrow \mathbb{R}^{2},
	\qquad
	\Upsilon(x):=
	\begin{cases}
		x, & \text{if } \vert x\vert<\rho,\\
		\rho x/\vert x\vert , & \text{if } \vert x\vert\geq\rho
	\end{cases}
	\]
	and then let us define the barycenter map as
	\[
	\beta_{\varepsilon}:= u\in\mathcal{N}_{\varepsilon}\mapsto \frac{1}{\Vert u\Vert_2^{2}}\int_{\mathbb{R}^{2}}\Upsilon(\varepsilon x)\vert u(x)\vert^{2} dx \in \mathbb{R}^{2}.
	\]
	By \cite[Lemma~4.2]{rDJ} we have that $\beta_{\varepsilon}(\Phi_{\varepsilon}(y))\to y$ as $\varepsilon\rightarrow 0^{+}$ uniformly in $ y\in M$.\\
	Let now $h: \mathbb{R}^{+}\rightarrow \mathbb{R}^{+}$ be any positive function such that $h(\varepsilon)\rightarrow 0$ as $\varepsilon\rightarrow 0^{+}$ and
	\begin{equation*}
		\tilde{\mathcal{N}}_{\varepsilon}:=\{u\in \mathcal{N}_{\varepsilon}: J_{\varepsilon}(u)\leq c_{V_{0}}+h(\varepsilon)\}.
	\end{equation*}
	Since $\vert J_{\varepsilon}(\Phi_{\varepsilon}(y))-c_{V_{0}}\vert\rightarrow 0$ uniformly in $ y\in M$ as $\varepsilon\rightarrow 0^{+}$, then $\tilde{\mathcal{N}}_{\varepsilon}\neq\emptyset$ for $\varepsilon>0$ small enough.\\
	Thus, arguing as in \cite[Lemma~4.4]{rDJ},
	\begin{equation*}
		\lim_{\varepsilon\rightarrow 0^{+}}\sup_{u\in \tilde{\mathcal{N}}_{\varepsilon}}\operatorname{dist}(\beta_{\varepsilon}(u), M_{\delta})=0.
	\end{equation*}
	Hence, as in \cite[Section~6]{rCL}, we can find $\tilde{\varepsilon}_{\delta}>0$ such that for any $\varepsilon\in (0, \tilde{\varepsilon}_{\delta})$, the following diagram
	\[
	M \xrightarrow{\Phi_{\varepsilon}}\tilde{\mathcal{N}}_{\varepsilon}\xrightarrow{\beta_{\varepsilon}} M_{\delta}
	\]
	is well defined and $\beta_{\varepsilon}\circ \Phi_{\varepsilon}$ is homotopically equivalent to the embedding $\iota: M\rightarrow M_{\delta}$. Thus, \cite[Lemma~4.3]{rBC} (see also \cite[Lemma 2.2]{rCLJDE}) implies that
	\begin{equation}\label{category}
		\text{cat}_{\tilde{\mathcal{N}}_{\varepsilon}}(\tilde{\mathcal{N}}_{\varepsilon})\geq \text{cat}_{M_{\delta}}(M).
	\end{equation}
Now observe that $J_\varepsilon$ is bounded from below on  $\mathcal{N}_\varepsilon$  (see the preliminary results in Section \ref{Sec3} and in particular \eqref{minmaxNehari}). Moreover, using, respectively, Lemma \ref{newadd}, Lemma \ref{le32}, and \eqref{moncmu}, for $\varepsilon$ small enough,
\[
c_\varepsilon \leq c_{V_0} <c_{V_0} + h(\varepsilon),
\]
\[
c_{V^*} + h(\varepsilon) <\frac{\theta-2}{16\theta},
\]
and, if $V_\infty <+\infty$,
\begin{equation}
\label{cVinfty}
c_{V_0} + h(\varepsilon)< c_{V_\infty}.
\end{equation}
Thus, if $\{u_{n}\}$ is a $(PS)_{c}$ sequence for $J_\varepsilon|_{\mathcal{N}_\varepsilon}$ with $c\in[c_\varepsilon,c_{V_0}+h(\varepsilon)]$, then, by (\ref{f3}) and \eqref{moncmu},
\begin{align*}
\frac{\theta-2}{2\theta}\|u_n\|_{\varepsilon}^2
& \leq
\Big(\frac{1}{2}-\frac{1}{\theta}\Big) \|u_n\|_{\varepsilon}^2
+\frac{1}{\theta}\int_{\mathbb{R}^{2}}\Big(f( |u_{n}|^{2}) |u_{n}|^{2}-\frac{\theta}{2}F( |u_{n}|^{2})\Big)dx\\
& =
J_{\varepsilon}(u_{n})-\frac{1}{\theta}J_{\varepsilon}'(u_{n})[u_{n}]
= c+o_n(1)\\
&<
c_{V_0} + h(\varepsilon) + o_n(1)
\leq c_{V^*} + h(\varepsilon) + o_n(1).
\end{align*}
Then, passing to the $\limsup$ on $n$ we have
\[
\frac{\theta-2}{2\theta} \limsup_{n} \|u_n\|_{\varepsilon}^2
\leq c_{V^*} + h(\varepsilon)
<\frac{\theta-2}{16\theta}
\]
for $\varepsilon$ small enough and so
\[
\limsup_{n}\Vert u_{n}\Vert_{\varepsilon}^2< \frac{1}{8}.
\]
Hence, using \eqref{cVinfty}  if $V_\infty <+\infty$, we can apply Proposition \ref{gt621}, obtaining that $J_\varepsilon|_{\mathcal{N}_\varepsilon}$ satisfies the $(PS)_{c}$ condition for $c\in[c_\varepsilon,c_{V_0}+h(\varepsilon)]$. Then, by \cite[Theorem~5.20]{rW}, the Ljusternik-Schnirelmann theory for $C^{1}$ functionals implies that $\tilde{\mathcal{N}}_{\varepsilon}$ contains at least $\text{cat}_{\tilde{\mathcal{N}}_{\varepsilon}}(\tilde{\mathcal{N}}_{\varepsilon})$ critical points of $J_\varepsilon|_{\mathcal{N}_\varepsilon}$. Observing that, due to Corollary \ref{gt678}, such critical points are critical points for $J_{\varepsilon}$ and using \eqref{category}, we conclude the proof of the multiplicity result.
\end{proof}

\section{Proof of Theorem \ref{thm3}}

This last section is dedicated to the study of the behavior of the maximum points  of the modulus of the solutions obtained in Theorem \ref{gt62} and in Theorem \ref{gt666} and to their the decay property.

\begin{proof}[Proof of Theorem \ref{thm3}]
Let $\varepsilon_{n}\rightarrow 0^{+}$ as $n\rightarrow +\infty$ and let $u_{\varepsilon_{n}}$ be a nontrivial solution of \eqref{1.6}  found in Theorem \ref{gt62} or in Theorem \ref{gt666}. For ground states found in Theorem \ref{gt62}, by Proposition \ref{propcecv0}, up to a subsequence, we have that $J_{\varepsilon_{n}}(u_{\varepsilon_{n}})=c_{\varepsilon_{n}}\to c_{V_0}$ and, for solutions found in Theorem \ref{gt666} we have that $J_{\varepsilon_{n}}(u_{\varepsilon_{n}})\leq c_{V_{0}}+h(\varepsilon_{n})$.
Thus, Proposition \ref{prop41} applies and so we can consider $\{\tilde{y}_{n}\}\subset \mathbb{R}^{2}$, and
 $v_{\varepsilon_{n}}:=u_{\varepsilon_{n}}(\cdot+\tilde{y}_{n})$, which  is a solution of
\[
\Big(\frac{1}{i}\nabla-A_{\varepsilon_n}(x+\tilde{y}_{n})\Big)^{2}v_{\varepsilon_{n}}
+V_{\varepsilon_n}(x+\tilde{y}_{n})v_{\varepsilon_{n}}
=f(\vert v_{\varepsilon_{n}}\vert^{2})v_{\varepsilon_{n}}
\quad
\hbox{in }\mathbb{R}^2,
\]
such that, up to a subsequence, $|v_{\varepsilon_{n}}|\rightarrow v$ in $H^{1}(\mathbb{R}^{2}, \mathbb{R})$ and $y_{n}=\varepsilon_{n}\tilde{y}_{n} \rightarrow y \in M$.\\
Arguing as in \cite[Lemma 5.1]{rDJ},  we have that $\{\vert v_{\varepsilon_{n}}\vert\}$ is bounded in $L^{\infty}(\mathbb{R}^{2}, \mathbb{R})$ and
\begin{equation}
\label{vnvan}
\lim_{\vert x\vert\rightarrow+\infty} \vert v_{\varepsilon_{n}}(x)\vert=0\quad\text{uniformly in } n\in \mathbb{N}.
\end{equation}
In addition, by the first part of the proof of Proposition \ref{prop41}, there exist $R,\beta>0$ such that
\[
{\liminf_n}\int_{B_{R}(\tilde{y}_{n})}\vert u_{\varepsilon_{n}}\vert^{2}dx\geq \beta.
\]
Then, if $\kappa>0$ such that $0<\pi \kappa^2 R^2  <\beta$, for $n$ large enough,
\[
\pi \kappa^2 R^2
\leq\int_{B_{R}(\tilde{y}_{n})}\vert u_{\varepsilon_{n}}\vert^{2}dx
\leq \pi R^2 \Vert  v_{\varepsilon_{n}}\Vert^{2}_{\infty},
\]
and so
\begin{equation}
\label{vnkappa}
\Vert  v_{\varepsilon_{n}} \Vert_{\infty}\geq \kappa.
\end{equation}
Let us show the concentration  of global maximum points  $p_n$ of $\vert v_{\varepsilon_{n}}\vert$.\\
If $z_n:=p_{n}+\tilde{y}_{n}$, by the definition of $v_{\varepsilon_n}$, we have that each $z_n$ is a global maximum point of $\vert u_{\varepsilon_{n}}\vert$.
Moreover, since by \eqref{vnvan} and \eqref{vnkappa}, $p_{n}\in B_{R}(0)$ for some $R>0$,  $\eta_{n}:=\varepsilon_{n}z_{n}=\varepsilon_{n}p_{n}+\varepsilon_{n}\tilde{y}_{n}\rightarrow y\in M$ and so
\begin{equation*}
	\lim_{n}V(\eta_{n})=V_{0}.
\end{equation*}
Now let us assume that $A\in C^{1}(\mathbb{R}^{2}, \mathbb{R}^{2})$ and let us  show decay of the modulus of our solutions.\\
Arguing, for instance, as in \cite[Remark 1.2]{rCJS}, $v_{\varepsilon_{n}}\in C^{1}(\mathbb{R}^{2}, \mathbb{C})$. Moreover, by \eqref{vnvan} and (\ref{f1}), there exists a $R_{1}>0$ such that
\begin{equation}\label{re1}
f(\vert v_{\varepsilon_{n}}\vert^{2})
\leq \frac{V_{0}}{2}
\text{ for all } \vert x\vert\geq R_{1} \text{ and uniformly in } n\in \mathbb{N}.
\end{equation}
Since $v_{\varepsilon_{n}}\in C^{1}(\mathbb{R}^{2}, \mathbb{C})$ and \eqref{vnvan} holds, there exist $c,C>0$ with $c^{2}<V_{0}/2$ such that
\[
\vert v_{\varepsilon_{n}}(x)\vert \leq Ce^{-c R_{1}}
\text{ for } \vert x\vert= R_{1}.
\]
Let $\varphi(x):=Ce^{-c\vert x\vert}$ in $\mathbb{R}^2$.\\
Observe that
\begin{equation}\label{re2}
\Delta \varphi\leq c^{2}\varphi
\text{ in } \mathbb{R}^2\setminus\{0\}.
\end{equation}
Moreover, by the Kato's inequality (see \cite[Lemma A]{rKa}) and  \eqref{V}, $\vert v_{\varepsilon_{n}}\vert$ satisfies (in the weak sense)
\[
-\Delta \vert v_{\varepsilon_{n}}\vert
+V_{0}\vert v_{\varepsilon_{n}}\vert
\leq f(\vert v_{\varepsilon_{n}}\vert^{2})\vert v_{\varepsilon_{n}}\vert
\hbox{ in }\mathbb{R}^2.
\]
Thus, by \eqref{re1}, we have that, uniformly in $n\in \mathbb{N}$,
\begin{equation}\label{re3}
-\Delta \vert v_{\varepsilon_{n}}\vert
+ \frac{V_{0}}{2}\vert v_{\varepsilon_{n}}\vert
\leq 0
\text{ for } \vert x\vert\geq R_{1}.
\end{equation}
Let, now, $\varphi_{n}:=\varphi-\vert v_{\varepsilon_{n}}\vert$.\\
By \eqref{re2} and \eqref{re3}, we have
\[
\begin{cases}
\displaystyle -\Delta \varphi_{n}+\frac{V_{0}}{2} \varphi_{n}\geq 0
&\text{ for } \vert x\vert\geq R_{1},\\
\varphi_{n}\geq 0
&\text{ for }\vert  x\vert= R_{1},\\
\displaystyle \lim_{\vert x\vert\rightarrow+\infty}\varphi_{n}(x)=0.
\end{cases}
\]
The classical maximum  principle (see e.g. \cite[Theorem 7.4.2]{rC1}) implies that $\varphi_{n}\geq 0$ for $\vert x\vert\geq R_{1}$ and so we obtain that
\[
|u_{\varepsilon_{n}}(x)|
=
\vert v_{\varepsilon_{n}}(x-\tilde{y}_n)\vert
\leq
Ce^{-c\vert x -\tilde{y}_n\vert}
\text{ for all }\vert x\vert\geq R_{1}
\text{ and uniformly in } n\in \mathbb{N}.
\]
Observe that, due to the change of variable introduced in Section \ref{Sec2} to write \eqref{1.1} in the equivalent form \eqref{1.6}, the points $\eta_n$ are maximizers of solutions of \eqref{1.1} and \eqref{decay} holds.
\end{proof}

\section*{Acknowledgments}
P. d'Avenia is supported by PRIN project 2017JPCAPN {\em Qualitative and quantitative aspects of nonlinear PDEs}. C. Ji is partially supported by Shanghai Natural Science Foundation (20ZR1413900).

\end{document}